\newcommand{\dirac}{\slashed{\partial}}
\DeclareMathOperator{\specb}{spec_{\bo}}
\newcommand{\grad}{\nabla}
\newcommand{\Diff}{\operatorname{Diff}}
\newcommand{\Sigmadot}{\dot{\Sigma}}
\newcommand{\sigmau}{\underline{\sigma}}
\newcommand{\tauu}{\underline{\tau}}
\newcommand{\etau}{\underline{\eta}}
\newcommand{\dop}{\eth} 
\newcommand{\pot}{\mathbf{A}}
\newcommand{\bA}{\mathbf{A}}
\newcommand{\tL}{\widetilde{L}}
\newcommand{\bF}{\mathbf{F}}
\newcommand{\Diffb}{\Diff_{\bo}}
\newcommand{\tG}{\widetilde{G}}
\newcommand{\bJ}{\mathbf{J}}
\newcommand{\bL}{\mathbf{L}}
\newcommand{\bSigma}{\boldsymbol{\Sigma}}
\newcommand{\br}{\mathbf{r}}
\newcommand{\bp}{\mathbf{p}}
\newcommand{\balpha}{\boldsymbol{\alpha}}
\newcommand{\bgamma}{\boldsymbol{\gamma}}
\newcommand{\bB}{\mathbf{B}}
\newcommand{\bR}{\mathbf{R}}
\newcommand{\bsigma}{\boldsymbol{\sigma}}
\newcommand{\jw}[1]{}
\newcommand{\db}[1]{}
\DeclareMathOperator{\sgn}{sgn}
\newcommand{\module}{\mathcal{M}}
\newcommand\bo{{\mathrm{b}}}
\newcommand\eo{{\mathrm{e}}}
\newcommand{\Tbstar}{{}^{\bo}T^*}
\newcommand{\Testar}{{}^{\eo}T^*}
\newcommand{\Sestar}{{}^{\eo}S^*}
\newcommand{\Sbstar}{{}^{\bo}S^*}
\newcommand\Tb{{}^{\bo} T}
\newcommand\Te{{}^{\eo} T}
\newcommand{\sigmab}{\sigma_\bo}
\newcommand{\Psib}{\Psi_\bo}
\newcommand{\Psie}{\Psi_\eo}
\newcommand{\He}{H_\eo}
\newcommand{\WFb}{\operatorname{WF}_\bo}
\newcommand{\WFe}{\operatorname{WF}_\eo}
\DeclareMathOperator{\liptic}{ell}
\newcommand{\algebra}{\mathcal{A}}
\newcommand{\flowout}{\mathcal{F}}
\DeclareMathOperator{\IC}{\mathsf{IC}}
\DeclareMathOperator{\OG}{\mathsf{OG}}
\newcommand{\charge}{\mathsf{Z}}
\DeclareMathOperator{\Id}{Id}
\newcommand{\bH}{H_{\bo}}
\newcommand{\dom}{\mathcal{D}}
\newcommand{\ellipticdirac}{\mathcal{B}}
\newcommand{\hamvf}{\mathsf{H}}
\newcommand{\sigmae}{\sigma_{\eo}}
\newcommand{\Diffe}{\operatorname{Diff}_{\eo}}
\newcommand{\diff}{\mathsf{D}}
\newcommand{\geom}{\mathsf{G}}
\newcommand{\DiffPsi}{\operatorname{Diff}^2\!\Psi_b}
 \newtheorem{propest}{Propagation Estimate}
\newtheorem*{assumption*}{Klein--Gordon Hypotheses}
\newcommand{\tQ}{\widetilde{Q}}
\newcommand{\tR}{\widetilde{R}}
\newcommand{\reals}{\mathbb{R}}
\newcommand{\pd}[1][]{\partial_{#1}}
\newcommand{\loc}{\text{loc}}
\DeclareMathOperator{\SO}{SO}
\newcommand{\coiso}{\mathcal{C}}
\newcommand{\hilbert}{\mathcal{H}}
\newcommand{\blowdown}{\mathfrak{b}}
\newcommand{\lag}{\mathcal{L}}
\title{Diffraction for the Dirac-Coulomb propagator}
\author{Dean Baskin and Jared Wunsch}
\date{\today}
\thanks{The authors are grateful to Christian G\'erard and Micha\l\
  Wrochna for suggesting the problem and providing helpful insight into
  its importance, as well as for helpful comments on an early version
  of the manuscript.  They are also grateful to Richard Melrose, Andr\'as
  Vasy, and especially Oran Gannot for many helpful conversations.
  The research for this paper began during a Research in Paris stay at
  the Institut Henri Poincar\'e.  Part of this material is based upon
  work supported by the National Science Foundation under Grant
  No. DMS-1440140 while the authors were in residence at the
  Mathematical Sciences Research Institute in Berkeley, California,
  during the Fall 2019 semester.  DB was supported by NSF CAREER grant
  DMS-1654056.  JW was supported by NSF grant DMS-1600023 and Simons
  Foundation Grant 631302.}
\begin{document}

\maketitle
\begin{abstract}
  The Dirac equation in $\RR^{1,3}$ with potential $\charge/r$ is a
  relativistic field equation modeling the hydrogen atom.  We analyze
  the singularity structure of the propagator for this equation,
  showing that the singularities of the Schwartz kernel of the propagator are along an
  expanding spherical wave away from rays that miss the potential
  singularity at the origin, but also may include an additional
  spherical wave of \emph{diffracted} singularities emanating from the
  origin.  This diffracted wavefront is $1-\ep$ derivatives smoother
  than the main singularities, for all $\ep>0,$ and is a conormal
  singularity.
  \end{abstract}

\section{Introduction}

In this paper we study the structure of the propagator for the
Dirac--Coulomb equation on $\RR^{1,3}$.  This equation, a description of the
hydrogen atom with a relativistic electron, was explicitly solved by
Darwin \cite{Da:28} in 1928 using separation of variables, giving a
mode-by-mode description of the solutions with the radial functions
defined by infinite series.  Such an approach, while computationally
useful for the spectral theory of the hydrogen atom, yields little
concrete information about the structure of the Schwartz kernel of the
propagator.

In this paper we derive the following results about the structure of
the propagator.  Notation involving the Dirac equation will be
explained in detail below.  Let $\eta$ denote the (mostly plus) Minkowski metric on
$\RR^4,$ whose coordinates are $t\equiv x^0,x^1,x^2, x^3.$  Let $r=r(x)\equiv\big(
(x^1)^2+(x^2)^2+(x^3)^2\big)^{1/2}$ denote radius in the space coordinates.

\begin{theorem}\label{theorem:structure} 
  Consider a real-valued vector potential $\pot = (A_{0} = \charge /
  r+V, A_{1}, A_{2}, A_{3})$ with $V, A_1,A_2,A_3 \in C^{\infty}(\RR^{3})$, and
let $m,\charge  \in \RR,$ with $\smallabs{\charge}<1/2.$

Let $\psi$ be the admissible fundamental solution of the Dirac equation minimally
coupled to the electric potential $V$:
  $$
\big( i \big(\gamma_0 (\pa_0+iA_{0} )+\gamma_j (\pa_j+i
A_j) \big) -m\big) \psi=0,
$$
with initial condition
$$
\psi_{x^0=0} =\psi_0 \delta_y
$$
for some four-spinor $\psi_0$ and point $y \in \RR^3.$

For $x^0 >r(y),$
$$
\WF \psi\subset \geom\cup \diff
$$
with $\geom=N^*\{\eta_{\alpha\beta} (x^\alpha-y^\alpha)
(x^\beta-y^\beta)=0\}$ given by the ``geometric'' (i.e., directly
propagated) light cone emanating from $y$ and
$\diff=N^*\{r(x)=x^0 -r(y)\}$ a secondary
``diffracted'' wavefront.  The singularity on $\diff \backslash \geom$
is conormal and is $1-0$ derivatives smoother than the singularity
at $\geom.$
\end{theorem}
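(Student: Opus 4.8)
The plan is to localize the analysis near the worldline $\reals_{x^0}\times\{0\}$ of the Coulomb singularity and to handle everything away from it by the standard smooth propagation of singularities. First I would record that for $|\charge|<1/2$ the Dirac--Coulomb Hamiltonian is essentially self-adjoint, so the admissible propagator $U(x^0)$ making sense of $\psi$ is well defined. Away from $\{x=0\}$ the Dirac equation forces $\psi$ to solve a Klein--Gordon-type equation $(\eta^{\alpha\beta}\pa_\alpha\pa_\beta-m^2+Q)\psi=0$ with $Q\in C^\infty$ of order zero (obtained by composing with the conjugate Dirac symbol), and the metric is exactly Minkowski since $V,A_1,A_2,A_3$ do not perturb it; so Hörmander's theorem applies componentwise and $\WF\psi$ is a union of null bicharacteristics of $\eta$ over $\{x\neq0\}$. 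Since $\psi_0\delta_y$ has wavefront set $N^*\{(0,y)\}$ at $\{x^0=0\}$, flowing out gives $\WF\psi\cap\{x\neq0\}\subset\geom\cup\flowout$, with $\flowout$ the flowout of the one null codirection at $(0,y)$ aimed at the origin, which reaches the worldline at $x^0=r(y)$. It then remains to show that the singularity deposited there sheds only the conormal wave $\diff=N^*\{r(x)=x^0-r(y)\}$, and $1-\ep$ orders below the incoming one.

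For this I would blow up the worldline $\reals_{x^0}\times\{0\}\subset\RR^{1,3}$; the resulting manifold has a boundary fibering over $\reals_{x^0}$ with fiber $\mathbb{S}^2$, and on it the Minkowski Dirac operator with potential $\charge/r$ becomes, after multiplication by $r$ and conjugation by a power of $r$ making it formally self-adjoint, an elliptic edge operator. Because $\charge/r$ is scale-invariant in $r$, its normal operator at the front face is again the exact massless Dirac--Coulomb operator, now on the flat cone $\reals_{x^0}\times([0,\infty)_r\times\mathbb{S}^2)$. Separating the angular Dirac operator on $\mathbb{S}^2$, whose eigenvalues are $\kappa\in\mathbb{Z}\setminus\{0\}$, diagonalizes the indicial family and shifts the indicial roots by $\pm\sqrt{\kappa^2-\charge^2}$; the hypothesis $|\charge|<1/2<|\kappa|$ guarantees these stay real and uniformly away from the exceptional values obstructing the diffractive improvement. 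Two reductions make this the governing model: near the worldline the smooth potentials $V,A_1,A_2,A_3$ and the mass $m$ are subleading edge terms (incorporate them by Duhamel against the model propagator), so they move neither $\geom$ nor $\diff$ nor spoil the regularity gain; and finite speed of propagation confines the construction to a neighborhood of the worldline and of the null geodesic through the origin.

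The core step is then the construction of the model edge propagator near the front face via the Mellin transform in $r$ --- equivalently, exploiting the scale invariance of the exact Dirac--Coulomb problem, whose modes are Darwin's confluent-hypergeometric radial functions \cite{Da:28}. Inverting the diagonalized indicial family and resumming over $\kappa$ yields, near the worldline, a splitting $\psi=\psi^{\geom}+\psi^{\diff}$: $\psi^{\geom}$ carries the incoming conormal singularity along its diffractive continuation --- unique, since the cone is three-dimensional and flat --- while $\psi^{\diff}$ is conormal to $\{r(x)=x^0-r(y)\}$. The order of $\psi^{\diff}$ relative to $\psi^{\geom}$ is extracted from the decay in $\kappa$ of the mode contributions together with the radial-point threshold at the front face; in three spatial dimensions these give a gain of one derivative, up to the arbitrarily small loss $\ep$ forced by summing the angular/Mellin data uniformly and by staying strictly below the threshold --- the claimed $1-\ep$. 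Finally I would globalize: the diffracted singularity, now located on a punctured neighborhood of the worldline, is transported by the propagation of singularities of the first paragraph to fill all of $\diff$, conormality and the order being preserved along the null flow.

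I expect the main obstacle to be the uniform-in-$\kappa$ control of the inverted indicial family for the \emph{first-order} Dirac system with the irrationally shifted roots $\pm\sqrt{\kappa^2-\charge^2}$: one must check both that these roots never collide with the critical values that would destroy the diffractive gain and that the resulting sum over $\kappa$ converges with the stated regularity. This is the Dirac counterpart --- with a genuinely non-meromorphic dependence on $\charge$ --- of the link-resolvent estimate underlying conic diffraction à la Melrose--Wunsch, and it is precisely where the sharp threshold is missed by $\ep$. The alternative route through Darwin's explicit modes and Abel summation of the resulting series encounters the same uniformity problem in a different guise.
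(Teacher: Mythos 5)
Your overall architecture (reduce to the worldline of the Coulomb singularity, treat the exterior by H\"ormander's theorem, identify the exact scale-invariant Dirac--Coulomb operator as the model at the front face) is sound, and the first paragraph is essentially correct. But the core of your argument --- ``inverting the diagonalized indicial family and resumming over $\kappa$ yields a splitting $\psi=\psi^{\geom}+\psi^{\diff}$'' with the claimed $1-\ep$ gain --- is not a proof step; it is a restatement of the theorem in the separated picture. You yourself flag the uniform-in-$\kappa$ control of the inverted indicial family (equivalently, of Darwin's confluent hypergeometric modes) as the ``main obstacle,'' but you never resolve it, and that obstacle \emph{is} the entire analytic content of the result. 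The paper explicitly notes that one could in principle derive the theorem by a careful analysis of Darwin's special functions ``but to our knowledge this has not been done''; your proposal is precisely that unexecuted program. Nothing in your write-up supplies the mode-by-mode kernel estimates, their uniform summability, or the mechanism producing exactly a one-derivative (minus $\ep$) gain on $\diff\setminus\geom$.

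Two further points would fail or require substantial repair even granting the model analysis. First, your plan to absorb $m$, $V$, $A_j$ ``by Duhamel against the model propagator'' is not justified: these terms destroy the scale invariance on which the Mellin/separation construction rests, and the paper points out that the mass term in particular ``disrupts the commutation of the equation with the scaling vector field,'' which is why even the leading-order polyhomogeneity of the diffracted wave is open in the massive case. Controlling the wavefront set and conormality of every Duhamel iterate, and summing, is exactly the kind of non-product difficulty that led Melrose--Wunsch (and this paper) to abandon perturbative/separation methods in favor of positive commutator arguments. Second, your attribution of the threshold $|\charge|<1/2$ to the indicial roots $\pm\sqrt{\kappa^2-\charge^2}$ is off: those roots remain real and separated for all $|\charge|<\sqrt{3}/2$ (which is the self-adjointness range); in the paper the constant $1/2$ enters through the Hardy inequality $\|r^{-1}u\|\le 2\|\pa_r u\|$ used to absorb the Coulomb terms in the b-calculus commutator estimates. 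By contrast, the paper's actual proof proceeds in two commutator-based steps: a b-calculus positive commutator argument (using the first-order system for the hyperbolic estimate and the squared, Klein--Gordon-type operator for the elliptic estimate) confining singularities to $\geom\cup\diff$, followed by edge-calculus propagation of module/coisotropic regularity combined with a nonfocusing condition on the fundamental solution and an interpolation argument to obtain the conormality and the $1-0$ improvement --- none of which requires uniform special-function asymptotics.
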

(Here, as throughout the paper, we use the notation $a-0$ to mean
``$a-\epsilon$ for all $\epsilon>0$.'')  The notion of admissibility
of solutions, which simply refers to lying in the scale of energy
spaces defined by the self-adjoint Hamiltonian, is defined in
\S\ref{section:energy} below.
\begin{figure}\begin{center}
\begin{tikzpicture}[scale=2]
  \def\b{0.4};
  \def\a{-0.9};
  \def\l{1.5};
  \def\offset{\a-0.1}
  \shadedraw[left color=black, right color=white]   (\b-\l, \a+\l)  -- (\b, \a)  -- (\b+\l, \a+\l) --cycle;
\filldraw[color=white, draw=black]  (\b,\a+\l) ellipse ({\l} and {0.2*\l});
\shadedraw[opacity=0.5, left color=red, right color=white] (\b-\l,\a+\l)-- (0,\a+\b) -- (\l-\b,\a+\l) --cycle;
\filldraw[color=white, draw=black]  (0,\a+\l) ellipse ({\l-\b} and {0.2*(\l-\b)});
\draw[blue, thin,->] (-1.4, \offset) -- (1.8,\offset);
\draw[red, very thick,->] (0, -1.4) -- (0, 1.4);
\draw[blue, thin,<-] (-.9, -.9+\offset) -- (0.8, 0.8+\offset);
\node[align=left] at (\l-\b+0.1,\a+\l) {$\diff$};
\node[align=left] at (\b+\l+0.1,\a+\l) {$\geom$};
\filldraw (\b,\a) circle (0.02);
\node[align=left] at (\b+0.2,\a){$y$};
\node[align=left] at (0.2,1.4){$x^0$};
\node[align=left] at (2.0,\offset){$x^2$};
\node[align=left] at (-0.7,-0.9+\offset){$x^1$};
\end{tikzpicture}
\end{center}
\caption{
The ``geometric'' ($\geom$) and ``diffracted'' ($\diff$) wavefronts
for the fundamental solution with initial pole at $y.$  Note that the
main and diffracted fronts intersect along a single ray, the
continuation of the null geodesic from $y$ straight through the
potential singularity.}
\end{figure}
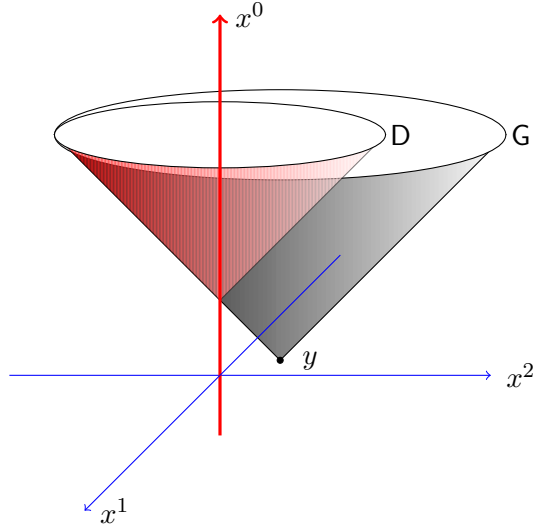


The proof uses tools originally developed for the
analysis of diffraction by cone and edge singularities
\cite{Melrose-Wunsch1}, \cite{MVW1}.  In particular, the analysis
proceeds in two main steps:
\begin{enumerate}
\item We show that the singularities of $\psi$ can at most lie in
  $\geom \cup \diff.$  This proceeds by a positive commutator argument
  using commutants in Melrose's b-calculus of pseudodifferential
  operators, inspired by the methods of Vasy \cite{Va:08}.
\item We show that the diffracted singularity is conormal and weaker
  than the main front.  This uses methods of Melrose and the second
  author from \cite{Melrose-Wunsch1}, involving Mazzeo's edge
  calculus of pseudodifferential operators, and a propagation of
  module regularity (as employed by Melrose--Vasy--Wunsch \cite{MVW1}))
  to obtain both the conormality and the
  regularity of the diffracted front.
  \end{enumerate}

  The Dirac--Coulomb equation describes spin-$\frac{1}{2}$ particles
  (such as electrons and positrons) in the presence of a point charge
  $\charge$.  Much of the literature about the Dirac--Coulomb system
  and related operators focuses on characterizing its eigenvalues and
  eigenstates.  This description is unfortunately insufficient to
  describe diffractive phenomena.  Darwin~\cite{Da:28} used separation
  of variables to characterize the generalized eigenfunctions of the
  exact Dirac--Coulomb system in terms of confluent hypergeometric
  functions and spinor spherical harmonics.  One could in principle
  derive our theorem in that setting by a careful analysis of the
  special functions but to our knowledge this has not been done.

  Kato in his book~\cite{Kato:book} provided one of the first results showing that
  the Hamiltonian governing the evolution of the Dirac--Coulomb system
  is essentially self adjoint in the range $\abs{\charge} < 1/2$
  (corresponding to atomic charge less than $68.5$).
  Weidmann~\cite{Weidmann} extended this result to $\abs{\charge} <
  \sqrt{3}/2$; beyond this value of $\charge$ the Hamiltonian is no
  longer self-adjoint.  We provide in Section~\ref{sec:self-adjoint}
  another proof of the essential self-adjointness in this optimal range.

  Other interest in the Dirac--Coulomb system as an evolution equation has
  come from the dispersive equations community.   Their work has largely
  focused on proving dispersive and Strichartz estimates for solutions
  by treating the components as solving systems of coupled wave
  equations.  We mention here the work of D'Ancona and
  collaborators~\cite{DaFa07, BoDaFa11, CaDa13} as well as the work of
  Cacciafesta--S\'er\'e~\cite{CaSe16} and Erdo{\u g}an--Green--Toprak~\cite{ErGrTo19}.

There is now a significant body of work describing the propagation
  of singularities on singular spaces, where diffraction occurs;
the problem of the wave equation
  on conic manifolds (or the wave equation with an inverse square
  potential) is the singular setting most closely resembling the
  Dirac--Coulomb problem.
  The
  first diffraction problems were rigorously analyzed by Sommerfeld
  \cite{Sommerfeld1}, with many other examples subsequently studied
  by Friedlander \cite{MR20:3703} and Keller \cite{Keller1}.  The use
  made by these authors of separation of variables and Bessel function
  analysis was generalized to cones of arbitrary cross section by
  Cheeger--Taylor~\cite{MR84h:35091a,MR84h:35091b}, who established the
  analogous result to Theorem~\ref{theorem:structure}
  in the setting of ``product cones,'' where the metric on the link
  does not vary with the radius.
  The non-product situation, where scaling invariance in $r$ is lost,
  requires different methods, and in consequence the
  b-pseudodifferential analysis used in this paper can be viewed as a
  continuation of a line of work beginning with
  Melrose--Sj{\"o}strand~\cite{Melrose-Sjostrand1,
    Melrose-Sjostrand2}, Melrose \cite{Melrose14}, and Taylor \cite{Taylor1} describing the propagation of singularities
  on manifolds with smooth boundary.  
 Melrose and
  the second author~\cite{Melrose-Wunsch1} used such commutator
  methods to generalize the results of Cheeger--Taylor to
  the non-product setting (see also Qian~\cite{Qian1} in
  the case of  inverse square potentials).  This work
  was expanded to include corners and edge singularities by Vasy~\cite{Va:08}
 and Melrose--Vasy--Wunsch~\cite{MeVaWu:13}, \cite{MVW1}.  The
  functional framework for our estimates is especially inspired by
  Vasy's work.

  One of the original applications for the careful analysis of
  singularity propagation was to the problem of wave decay.  Indeed,
  in certain settings Lax--Phillips~\cite{Lax-Phillips1} and
  Vainberg~\cite{Vainberg1} (later generalized by
  Tang--Zworski~\cite{Tang-Zworski1}) provided a blueprint for
  obtaining decay estimates on ``perturbations'' of odd-dimensional
  Euclidean spaces from propagation estimates using as input the weak
  Huygens principle, which dictates that a solution with compactly
  supported Cauchy data eventually
  becomes smooth in a fixed compact set.  
 More recent approaches to wave decay applying to spacetimes with
 ends that are not flat Minkowski space (again following the work of
  Vasy~\cite{Vasy:KdS}) give new ways to extract decay
  rates for solutions of wave equations from propagation estimates.
  Work of the authors and Vasy~\cite{BaVaWu:15, BaVaWu:18} and the
  first author and Marzuola~\cite{BaMa1} use related techniques to
  describe the radiation field on asymptotically Minkowski spaces and
  on product cones, respectively.  Similar techniques played a key
  role in the work of Hintz--Vasy~\cite{HiVa:18} establishing the
  global stability of the Kerr--de Sitter spacetime.

We thus hope to
  use the results obtained here to study the decay rates and
  asymptotics of the Dirac equation with one or more Coulomb-type singularities.
Additionally, there are potential applications of our results to
quantum field theory, viz., 
  the construction of Hadamard states for the Dirac--Coulomb
  problem (see, e.g., \cite{GeWr:14}). These physically acceptable states are
  characterized by their wavefront sets, with the separation between
  $\tau \gtrless 0$ components (with $\tau$ dual to $t$) playing an
  essential role.

  Even though the square of the Dirac--Coulomb system is principally
  scalar, the Dirac--Coulomb problem poses a number of difficulties not present
  with scalar wave equations on singular backgrounds.  Many of these
  can be described in terms of the form of the second order equation
  obtained by (approximately) squaring the system (described in
  Section~\ref{section:KG} below).  In the case of the exact
  Dirac--Coulomb system, this second order operator has the form
  \begin{equation*}
     -(\pa_t+ i \frac{\charge}{r})^{2} - \Lap -m^{2} - i\frac{\charge}{r^{2}}
     \begin{pmatrix}
       0 & \sigma_{r} \\ \sigma_{r} & 0
     \end{pmatrix},
  \end{equation*}
  where $\Lap$ is the (positive) Laplacian on $\reals^{3}$ and
  $\sigma_{r}$ are $2\times 2$ Pauli-type matrices that square to the
  identity.  The equation differs from the Klein--Gordon equation in two significant
  ways.  The first way is that the potential is coupled via the
  ``minimal coupling'' formalism, which introduces cross terms of the
  form $\frac{\charge}{r}D_{t}$; this does not present much additional
  difficulty, although it does need to be controlled in the b-calculus
  propagation arguments.  More significant is the second difference, namely the
  order zero term
  \begin{equation*}
    - i\frac{\charge}{r^{2}}
     \begin{pmatrix}
       0 & \sigma_{r} \\ \sigma_{r} & 0
     \end{pmatrix}.
  \end{equation*}
  As the Hardy inequality on $\reals^{3}$ suggests that factors of $1/r$
  should be treated as \emph{derivatives}, this term is principal from the
  point of view of scaling.  Moreover, it is anti-self-adjoint and
  cannot have a sign because $\sigma_{r}$ has eigenvalues $\pm 1$.  
  Dealing with it directly can cause significant headaches.  In trying to
  prove the diffractive theorem (Theorem~\ref{theorem:diffractive1}
  below) for the second order equation, this anti-self-adjoint term
  creates what should be viewed as the top order term and cannot be
  controlled by the positive terms in the commutator estimate.  This
  term even makes global energy estimates difficult, as the
  derivative of the energy can no longer be controlled by the energy.

  The complications of the Klein--Gordon system suggest that one ought to work with the first order
  system directly.  On the other hand, the ``energy estimates''
  obtained via the first order system are not as simple to work with
  as those arising from the second order equation.  We therefore use
  both equations in this paper.  For the elliptic part of the
  diffractive theorem (Section~\ref{sec:elliptic}) and the geometric
  improvement  (Section~\ref{sec:edgeprop}) we work with the second
  order equation, but for the ``hyperbolic'' part of the diffractive
  theorem (Section~\ref{sec:hyperbolic-diffractive}) we work directly
  with the first order equation.

  Studying the massive (rather than massless) Dirac equation
  introduces further complications.  In the massive case, the
  equations involving the $4\times 4$ Dirac matrices cannot be
  substantially simplified; in the massless ($m=0$) setting, the
  equations effectively decouple into two systems involving
  $2\times 2$ matrices.  More significantly, the presence of the mass
  term disrupts the commutation of the equation with the scaling
  vector field.  In the massless setting, it is possible to show that
  the diffracted wave has a leading order polyhomogeneous term but
  even this statement seems to be considerably more difficult in the
  massive case.

  In Section~\ref{sec:DC-intro} we introduce the Dirac--Coulomb
  equation and fix some notation.  Section~\ref{section:bedge}
  provides an introduction to the $\bo$- and edge-pseudodifferential
  calculi and describes the interaction of the $\bo$-calculus with
  differential operators on $\reals^{3}$.  In
  Section~\ref{sec:analyt-prlim} we return to the equation and provide
  some preliminary results: we show that the Hamiltonian governing the
  evolution is essentially self-adjoint for
  $\abs{\charge} < \sqrt{3}/2$, discuss the available energy
  estimates, introduce the second order operator, and describe how
  singularities propagate away from the origin.
  Sections~\ref{sec:diffractive} and~\ref{sec:geom} are the heart of
  the paper; Section~\ref{sec:diffractive} proves the diffractive
  theorem in which we show that singularities propagating through the
  origin must lie on the union of the diffracted and propagated fronts
  and Section~\ref{sec:geom} shows that the singularity along the
  diffracted front is $1-0$ orders smoother than along the propagated
  one.

  \section{The Dirac--Coulomb equation}
  \label{sec:DC-intro}
\subsection{Notation}
\label{sec:notation}
We use coordinates $x^\alpha,$ $\alpha=0,\dots, 3$ on $\RR^{1,3};$
when referring to spatial coordinates (indices $1,2,3$) we use Latin
rather than Greek superscripts.  When appropriate, we employ
the notation $t=x^0$ and use polar coordinates $r \in (0,\infty),$
$\theta \in S^2$ in the spatial variables.  Below and in what follows,
we use $\pot$ to denote an electromagnetic potential with $A_{\mu}$
its components, i.e., $\pot = (A_{0}, A_{1}, A_{2}, A_{3})$.  We are
most interested in the case when $A_{0}$ has Coulomb-like
singularities; in this case we write
\begin{equation*}
  A_{0}= \frac{\charge}{r} + V,
\end{equation*}
where $V \in \CI$.
  
The Dirac operator on $\RR^{1,3}$ is given by
$$
\dirac = \gamma^\alpha \pa_\alpha,
$$
where $\gamma^{\alpha}$ are the $4\times 4$ matrices
$$
\gamma^0=\begin{pmatrix} I & 0 \\ 0 & -I \end{pmatrix},
$$
and
$$
\gamma^j=\begin{pmatrix} 0 & \sigma_j \\ -\sigma_j & 0 \end{pmatrix},
$$
and $\sigma_j$ are the Pauli matrices,
$$
\sigma_1 =
\begin{pmatrix}
0 & 1 \\ 1 & 0
\end{pmatrix},\
\sigma_2 =
\begin{pmatrix}
0 & -i \\ i & 0
\end{pmatrix},\
\sigma_3 =
\begin{pmatrix}
1 & 0 \\ 0 & -1
\end{pmatrix}.
$$

The $\gamma$ matrices satisfy the anticommutation
relation\footnote{Readers consulting other references should be aware
  that there are at least two conventions in the literature.  Indeed,
  many physics texts (e.g., Akhiezer and Berestetsky~\cite{AkBe:65}
  and Rose~\cite{Ro:61}) ask that the gamma matrices satisfy a
  \emph{Riemannian} anticommutation relation and then set $x_{0} = ict$.}
\begin{equation*}
  \gamma^{\alpha} \gamma^{\beta} + \gamma ^{\beta}\gamma^{\alpha}= -2
  \eta^{\alpha\beta} \Id_{4},
\end{equation*}
where $\eta^{\alpha\beta}$ are the components of the Minkowski metric,
i.e.,
\begin{equation*}
  \eta^{\alpha\beta} =
  \begin{cases}
    -1 & \alpha = \beta = 0 \\
    1 & \alpha = \beta \in \{ 1, 2, 3\} \\
    0 & \alpha \neq \beta
  \end{cases}
  .
\end{equation*}

The free Dirac equation then reads
\begin{equation}\label{dirac}
(i \dirac-m)\psi=0.
\end{equation}
With an electromagnetic potential
$\pot = (A_{0}, A_{1}, A_{2}, A_{3})$, we replace $\dirac$ by
$$
\dirac_\pot \equiv \gamma^0 (\pa_0+iA_{0}) + \gamma^j (\pa_j + i A_{j});
$$
this is the ``minimal coupling'' convention.

Other notational conventions that we employ are as follows.  We use a
boldface Greek letter (such as $\bsigma$) to denote the associated
3-vector of matrices (such as $(\sigma_{1}, \sigma_{2}, \sigma_{3})$).
We then set
\begin{equation}
  \label{Sigma}
  \bSigma\equiv
  \begin{pmatrix}
    \bsigma & 0 \\ 0 & \bsigma
  \end{pmatrix}.
\end{equation}
and, in keeping with physics notation, we also write
$$
\beta =\gamma^0,
$$
and let $\balpha$ be defined by
$$
\bgamma=  \beta \balpha,
$$
hence
$$
\balpha=\begin{pmatrix} 0 & \bsigma \\ \bsigma & 0\end{pmatrix}.
$$
Letting
\begin{equation}
  \label{gamma5}
  \gamma_5 = i \gamma^{0}\gamma^{1}\gamma^{2}\gamma^{3} = \begin{pmatrix}
0 & \Id \\ \Id & 0
\end{pmatrix},
\end{equation}
we then obtain
$$
\balpha=\gamma_5 \Sigma.
$$

When using spherical coordinates, we will require radial versions of
various of the matrix quantities discussed above.  To this end, we set
\begin{equation}\label{radialmatrices}
\sigma_r=\sum_{j=1}^3 \frac{\hat x_j}{\smallabs{x}} \sigma_j,\  \alpha_r=\sum_{j=1}^3
\frac{\hat x_j}{\smallabs{x}} \alpha_j,\ \Sigma_r=\sum_{j=1}^3
\frac{\hat x_j}{\smallabs{x}} \Sigma_j,
  \end{equation}

\subsection{Spherical spinors and separation}
\label{sec:separation}
Let $$\bL=\br \times \bp,$$ where as usual
$$
\bp=
\begin{pmatrix}
  i^{-1} \pa_{x^1}\\
  i^{-1} \pa_{x^2}\\
    i^{-1} \pa_{x^3}
\end{pmatrix}.
$$
Let
$$
\bJ \equiv \bL +\frac 12 \bSigma
$$
denote the total angular momentum operators (orbital angular momentum
and spin together.)
Following Dirac, we also let
$$
K = \beta(1+\bSigma\cdot \bL).
$$
\begin{lemma}
Suppose $A_{0}$ is radial and $A_{j}= 0$.  The following operators are mutually commuting:
$$
\dirac_{\pot},\ J^2,\ J_{3},\ K. 
$$
Moreover,
$$
[\beta,K]=0.
$$
\end{lemma}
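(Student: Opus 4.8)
The plan is to verify the commutation relations by direct computation, exploiting the well-known algebraic identities for angular momentum operators and the Pauli/Dirac matrices. First I would observe that since $A_0$ is radial and $A_j=0$, the coupled Dirac operator is $\dirac_\pot = \gamma^0(\pa_0 + iA_0) + \gamma^j\pa_j$, so $D_t$-type terms commute with all spatial rotation generators, and the only nontrivial content is how $\bJ$, $J^2$, $J_3$, and $K$ interact with $\gamma^j\pa_j = -\beta\,\balpha\cdot\bp$ and with the radial multiplication operator $A_0(r)$. Multiplication by any radial function commutes with $\bL$, with $\bSigma$ (a constant matrix), hence with $\bJ$ and $J^2, J_3$; it also commutes with $K=\beta(1+\bSigma\cdot\bL)$ since $\beta$ and $\bSigma$ are constant and $\bL$ annihilates radial functions. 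So it remains to treat the free Dirac operator, i.e., to show $-\beta\,\balpha\cdot\bp$ commutes with each of $\bJ$, $J^2$, $J_3$, $K$.

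Next I would handle $\bJ$. The key input is the commutator identity $[\bJ, \balpha\cdot\bp]=0$, which follows from the standard fact that $\bJ$ generates simultaneous rotations of the spatial variable and the spinor index: one computes $[L_k, p_j] = i\epsilon_{kjl}p_l$ and $[\tfrac12\Sigma_k, \alpha_j] = i\epsilon_{kjl}\alpha_l$ (the latter using $[\sigma_k,\sigma_j]=2i\epsilon_{kjl}\sigma_l$ and the block form of $\balpha$, $\bSigma$), and these two contributions cancel in $[\bJ, \balpha\cdot\bp] = \sum_j\big([\bJ,\alpha_j]p_j + \alpha_j[\bJ,p_j]\big)$. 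Since $\beta$ is a constant matrix commuting with $\bL$ and anticommuting appropriately — actually $[\beta,\bSigma]=0$ and $[\beta,\bL]=0$, so $[\bJ,\beta]=0$ — we get $[\bJ, \dirac_\pot]=0$, and consequently $[J^2,\dirac_\pot]=[J_3,\dirac_\pot]=0$. That $J^2$ and $J_3$ commute with each other is the usual angular momentum algebra. For $[\beta,K]=0$: write $K=\beta + \beta\bSigma\cdot\bL$; then $[\beta,K]=[\beta,\beta\bSigma\cdot\bL]=\beta[\beta,\bSigma\cdot\bL]=0$ since $\beta$ commutes with both $\bSigma$ and $\bL$. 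This also shows $[K,J^2]=[K,J_3]=0$ because $K$ is built from $\beta$, $\bSigma$, $\bL$, each of which commutes with $\bJ$ (for $\bSigma\cdot\bL$ this is the scalar $\bJ^2 - \bL^2 - \tfrac34$ up to constants, manifestly rotation-invariant).

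The one genuinely substantive step — and the one I expect to be the main obstacle — is $[\dirac_\pot, K]=0$, equivalently $[\balpha\cdot\bp, \beta(1+\bSigma\cdot\bL)]=0$. Here one cannot avoid a careful computation: $\beta$ \emph{anticommutes} with each $\alpha_j$, so $\balpha\cdot\bp$ and $\beta$ do not commute separately, and the miracle is that the factor $(1+\bSigma\cdot\bL)$ is exactly what is needed to restore commutativity. The clean way is to use the identity $\balpha\cdot\bp = \alpha_r\big(p_r + \tfrac{i}{r}(1 + \beta K \beta^{-1})\big)$ — or, more to the point, the operator identity $\beta(1+\bSigma\cdot\bL)\,\balpha\cdot\bp = -\,\balpha\cdot\bp\,\beta(1+\bSigma\cdot\bL)$ combined with the sign flip from pushing $\beta$ through $\balpha$; concretely one shows $\{\beta, \balpha\cdot\bp\}=0$ and $\{\beta\bSigma\cdot\bL, \balpha\cdot\bp\}$ equals $2\balpha\cdot\bp\,\beta$ using $\bSigma\cdot\bL\,\bsigma\cdot\bp + \bsigma\cdot\bp\,\bSigma\cdot\bL = -2\,\bsigma\cdot\bp$ (which in turn comes from $[\sigma_k, \sigma_j] = 2i\epsilon_{kjl}\sigma_l$, $\{\sigma_k,\sigma_j\}=2\delta_{kj}$, and $[L_k,p_j]=i\epsilon_{kjl}p_l$, together with $\bp\times\bp=0$ and $\bL\cdot\bp=0$). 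Adding the two anticommutator computations gives $K\,\balpha\cdot\bp + \balpha\cdot\bp\,K = 0 + 0 = $ — wait, more precisely they combine so that $[K,\balpha\cdot\bp]=0$. I would organize this as a short lemma on the Pauli algebra, prove that identity first, and then the commutation $[\dirac_\pot,K]=0$ follows immediately (the $\gamma^0(\pa_0+iA_0)$ piece commuting with $K$ as already noted). Finally, $[\dirac_\pot, J^2]$ and $[\dirac_\pot,J_3]$ follow from $[\dirac_\pot,\bJ]=0$, completing the list.
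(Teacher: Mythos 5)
Your proof is correct and is precisely the standard direct computation that the paper itself does not reproduce but instead cites (Rose, \emph{Relativistic Electron Theory}, Section 12): rotation invariance gives $[\bJ,\dirac_\pot]=0$, the key Pauli-algebra identity $\{\bsigma\cdot\bL,\bsigma\cdot\bp\}=-2\,\bsigma\cdot\bp$ combined with $\{\beta,\balpha\cdot\bp\}=0$ gives $[K,\balpha\cdot\bp]=0$, and the remaining commutators are routine. The only blemish is the garbled sentence where you "add the two anticommutator computations" — the identities you state are right and do combine to give $[K,\balpha\cdot\bp]=0$, but that step should be written out cleanly (e.g.\ $K\,\balpha\cdot\bp=\beta\balpha\cdot\bp+\beta\bSigma\cdot\bL\,\balpha\cdot\bp=-\balpha\cdot\bp\,\beta+\balpha\cdot\bp\,\beta\bSigma\cdot\bL+2\balpha\cdot\bp\,\beta=\balpha\cdot\bp\,K$).
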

(See e.g.\ \cite{Ro:61}, Section 12 for proofs.)  In the case where
the potential $A_{0}$ is exactly radial, we could separate variables
explicitly and study the action of $\dirac_{\pot}$ on the common
eigenfunctions of the remaining operators.  Although we do not take
this approach, we include a discussion of the eigenfunctions because
some of the calculations below are easier to verify on individual
eigenspaces.  These eigenfunctions are well known to be described
blockwise by two component spinor spherical harmonics as follows.
Following e.g., \cite{Sz:07}, we set for $\theta \in S^2$
$$
\Omega_{\kappa \mu} (\theta) =
\begin{pmatrix}
  \sgn(-\kappa) \big( \frac{\kappa+1/2-\mu}{2 \kappa+1}
  \big)^{1/2}Y_{l,\mu-1/2}(\theta)\\
\big( \frac{\kappa+1/2+\mu}{2 \kappa+1} \big)^{1/2}Y_{l,\mu+1/2}(\theta)
\end{pmatrix},
$$
where
\begin{align}
\kappa &\in \ZZ \backslash \{0\},\\ \mu &\in
  \{-\smallabs{\kappa}+1/2,\dots, \smallabs{\kappa}-1/2\},\\
  l&=\abs{\kappa+\frac 12}- \frac 12,
\end{align}
and where $Y_{lm}$ are the standard spherical harmonics (see
\cite[(2.1.9)--(2.1.10)]{Sz:07} for normalization conventions).  Then
by \cite[(3.2.3)]{Sz:07}, we obtain
$$
(\bsigma \cdot \bL +1) \Omega_{\kappa\mu} = -\kappa\Omega_{\kappa\mu},
$$
hence
\begin{equation}\label{Kaction}
K
\begin{pmatrix}
 a \Omega_{\kappa\mu}\\ b \Omega_{\kappa'\mu'}
\end{pmatrix}= 
\begin{pmatrix}
   -a\kappa \Omega_{\kappa\mu}\\ b \kappa' \Omega_{\kappa'\mu'}
\end{pmatrix}
\end{equation}
and eigenvectors of $K$ are given by the span of
$$
\begin{pmatrix}
  \Omega_{\kappa\mu}\\ 0
\end{pmatrix},
\begin{pmatrix}
 0\\  \Omega_{-\kappa\mu'}
\end{pmatrix},\ \mu, \mu' \in   \{-\smallabs{\kappa}+1/2,\dots, \smallabs{\kappa}-1/2\};
$$
the eigenvalue of $K$ on this eigenspace is $-\kappa$.  Note that
\begin{equation}\label{sigmar}
\Sigma_r
\begin{pmatrix}
 a \Omega_{\kappa\mu}\\ b \Omega_{-\kappa\mu'}
\end{pmatrix}= 
\begin{pmatrix}
   -a \Omega_{-\kappa\mu}\\ -b\Omega_{\kappa\mu'}
\end{pmatrix},
\end{equation}
where $\Sigma_r$ is defined in \eqref{radialmatrices} above.

We further record here the relationship between $K$ and $\Lap_{\theta}$:
\begin{equation*}
  \Lap_{\theta} = K^{2} - \beta K.
\end{equation*}
This follows from the identity (see Rose~\cite{Ro:61}):
\begin{equation*}
  (\Sigma \cdot \bA) (\Sigma \cdot \bB) = \bA \cdot \bB + i \Sigma
  \cdot (\bA \times \bB).
\end{equation*}
Applying this to $\Sigma \cdot \bL$ yields
\begin{equation*}
  (\Sigma \cdot \bL) ^{2} = \Lap_{\theta} - \Sigma \cdot \bL,
\end{equation*}
so that
\begin{equation}\label{Ksquared}
  K^{2} = (\Sigma \cdot \bL + 1)^{2}  = \Lap_{\theta} + \Sigma \cdot
  \bL + 1 = \Lap_{\theta} + \beta K.
\end{equation}

In particular, note that
\begin{equation*}
  [\Lap_{\theta}, K] = [K^{2} - \beta K, K] = 0,
\end{equation*}
i.e., $K$ commutes with $\Lap_{\theta}$.

We now describe the separation of variables for a stationary Dirac
equation:
The massive Dirac equation with an electromagnetic potential $\pot =
  (A_{0}, A_{1}, A_{2}, A_{3})$ reads
$$
(i\dirac_{\pot}-m)\psi \equiv \big( i \big(\gamma^{0} (\pa_0+iA_{0}) +\gamma^{j} (\pa_j+iA_{j})\big) -m\big) \psi=0,
$$
hence multiplying by $\beta\equiv \gamma^0$ we obtain
$$
\big(  \big( i(\pa_0+iA_{0}) +i\beta \gamma^{j} (\pa_j+iA_{j})\big) -m\beta \big) \psi=0,
$$
i.e.,
\begin{equation}
 \dop\psi \equiv\big( i(\pa_0+iA_{0}) + i\alpha_j (\pa_j+iA_{j}\big) -m\beta \big)
\psi\equiv  i\pa_t\psi -\ellipticdirac \psi=0,
\label{eq:defin-of-first-order-op}
\end{equation}
where this is taken as a definition of the operator $\dop$ and 
$$
\ellipticdirac \equiv \sum_{j=1}^3 \alpha_j \frac{1}{i}(\pa_j+iA_{j}) +A_{0} + m\beta ;
$$
here we have, exceptionally, written out the summation explicitly here to remind the
reader that it is only over spatial indices $1,2,3.$  

Thus we are concerned with the unitary group generated by the
operator $\ellipticdirac.$

Now we compute, in the notation of \cite{Ro:61}, 
\begin{align*}
\balpha \cdot \frac{1}{i}\nabla
  &=\balpha \cdot \bp\\
  &= \gamma^{5}\bSigma\cdot \bp \\
  &= \gamma^{5}\bSigma_{r}\left(\frac{1}{i}\pa_{r} + \frac{i}{r}
    \bSigma \cdot \bL\right) \\
  &= -i \alpha_{r}\left( \pa_{r} - \frac{1}{r}(\beta K -\Id)\right) \\
  &= -i \alpha_{r}\left( \pa_{r} + \frac{1}{r} - \frac{1}{r}\beta K\right).
\end{align*}
More detail for the above calculation can be found in
Rose~\cite[p.~158, eq~(2.47)]{Ro:61}.  

Thus, finally, in polar coordinates,
\begin{equation}\label{ellipticdirac2}
  \ellipticdirac = \big( - i \alpha_{r} \left( \pa_{r} + \frac{1}{r} -
    \frac{1}{r}\beta K\right) + A_{0} + \sum \alpha_{j}A_{j} + m \beta\big).
\end{equation}


\section{$\bo$- and edge-geometry}\label{section:bedge}

Owing to the need to microlocalize solutions finely at the potential
singularity, it is natural to introduce a new space obtained by
\emph{blowup} from our Minkowski space.  In the simple case under
consideration here, the blowup amounts to substituting the space
$$
X \equiv [\RR^3; \{0\}] \equiv [0,\infty)_r \times S^2_\theta
$$
for the Euclidean space $\RR^3,$ with the \emph{blowdown map}
$$
\blowdown\colon X \to \RR^3
$$
being the polar coordinate map $(r,\theta) \to r\theta;$ this is a
diffeomorphism away from the boundary $r=0$ (which is referred to as
the \emph{front face} of the blowup).  We will use the same notation for the blowdown
map in the full Minkowski space, where we introduce polar coordinates
in spatial variables only, hence set
$$
M \equiv [\RR^{1,3}; \RR\times \{0\}] \equiv \RR_t \times X.
$$

Both $X$ and $M$ are manifolds with boundary.  (That they are
noncompact as well will play no essential role in our analysis, owing
to the local nature of the propagation of singularities.)  We will
need to consider two separate calculi of pseudodifferential operators
on $M,$ yielding microlocalizations of two different Lie algebras of
vector fields.  The first, Melrose's \emph{b-calculus}
\cite{Melrose:APS}, contains as first order operators the vector
fields tangent to the boundary of $M$.  The second, Mazzeo's
\emph{edge calculus} \cite{MR93d:58152}, contains instead the vector fields that are
\emph{tangent to the fibers of the blowdown map} as well as to the
boundary, hence in particular, we obtain $r \pa_t$ rather than $\pa_t$
in the latter calculus.  We describe the important features of these two
calculi below.

\subsection{$\bo$-calculus}

Full technical details on the $\bo$-calculus can be found in the book
of Melrose~\cite{Melrose:APS}; see also the introductory article by Grieser~\cite{Gr:01}.

The space of \emph{b-vector fields}, denoted $\mathcal{V}_{\bo}(M),$ is the
vector space of vector fields on $M$
tangent to $\pd M$;
they are spanned over $C^{\infty}(M)$ by the vector fields
$r\pd[r],$ $\pd[t],$ and $\pd[\theta]$. We note that $r\pd[r]$ is
well-defined, independent of choices of coordinates, modulo $r \mathcal{V}_{\bo}(M)$; one may call this the
{\em b-normal vector field} to the boundary. One easily verifies that $\mathcal{V}_{\bo}(M)$ forms a Lie
algebra. The set of b-differential operators, $\Diffb^{*}(M)$, is the
universal enveloping algebra of this Lie algebra:
it is the filtered algebra consisting of operators of the form
\begin{equation}\label{exampleboperator}
A=\sum_{\smallabs{\alpha}+j+k\leq m} a_{j,k,\alpha}( r,t,\theta) (r D_r)^jD_t^k
D_\theta^\alpha \in \Diffb^m(M)
\end{equation}
 (locally near $\pa M$) with the coefficients $a_{j,k,\alpha} \in \CI(M).$

The
b-pseudodifferential operators $\Psib^{*}(M)$ are the ``microlocalization''
of this Lie algebra, formally consisting of (properly supported) operators of the form
$$
b(r,t, \theta,r D_r,D_t,  D_\theta)
$$
with $b(r,t,\theta,\sigma,\tau, \eta)$ a Kohn-Nirenberg symbol.

The space $\mathcal{V}_{\bo}(M)$ is in fact the space of sections of a
smooth vector bundle over $M,$ the \emph{b-tangent bundle}, denoted
$\Tb M.$ The sections of this bundle are of course locally spanned by
the vector fields $r\pa_r,\pa_t, \pa_\theta.$ The dual bundle to $\Tb M$ is
denoted $\Tbstar M$ and has sections locally spanned over $\CI(M)$ by
the one-forms $dr/r, dt, d\theta.$

The symbols of operators in $\Psib^*(M)$ are thus Kohn-Nirenberg
symbols defined on $\Tbstar M.$ The principal symbol map, denoted
$\sigma_{\bo},$ maps the classical subalgebra of
$\Psib^{m}(M)$ to 
homogeneous functions of order $m$ on $\Tbstar M.$ In the particular
case of the subalgebra $\Diff_{\bo}^{m}(M),$ if $A$ is given by
\eqref{exampleboperator} we have
$$
\sigma_{\bo}(A)=\sum_{\smallabs{\alpha}+j+k= m} a_{j,k,\alpha}(r,t,\theta) \sigma^j\tau^k
\eta^\alpha
$$
where $\sigma,\tau, \eta$ are ``canonical'' fiber coordinates on $\Tbstar M$
defined by specifying that the canonical one-form be 
$$
\sigma \frac{dr}r+\tau dt + \eta \cdot \frac{d\theta}{r}.
$$
As homogeneous functions of a given order on $\RR^{n} \setminus 0$ can be
identified with smooth functions on $S^{n-1}$, we sometimes view
$\sigmab$ as a smooth function on $\Sbstar M$.

We also identify a subalgebra of $\Psib(M)$ that will be essential for
the commutator argument in Section~\ref{sec:diffractive}.
\begin{definition}
  \label{definition:invariant}
  We say $A \in \Psib^{m}(M)$ is \emph{invariant} if it is scalar and
  invariant under the action of $\SO (3)$ on functions, i.e., if $A$
  is scalar and $R^{-1}AR = A$ for all $R \in \SO (3)$, where the
  action of $\SO (3)$ on functions is simply $Rf(x) = f(R^{-1}x)$.
\end{definition}

Any scalar symbol invariant under the (lifted) action of $\SO(3)$
on $\Tbstar M$ may be quantized to an invariant operator.
\begin{lemma}\label{lemma:invariantcomm}
Invariant
operators commute with $\Lap_{\theta}$ and $K$.
\end{lemma}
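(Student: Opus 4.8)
The plan is to pass from group invariance to infinitesimal (commutator) invariance, and then to use the fact that both $\Lap_\theta$ and $K$ are assembled from the orbital angular momentum operators $L_1,L_2,L_3$ — the components of $\bL=\br\times\bp$ — together with constant $4\times 4$ matrices. The first point to record is that each $L_j$ lies in $\Diff^1(S^2)\subset\Diffb^1(M)$: in polar coordinates the radial part of $\br\times\bp$ drops out, leaving a vector field on the sphere which acts componentwise on spinors, so $L_j$ is scalar, and it generates, up to the usual factor of $i$, a one-parameter subgroup $R(s)=\exp(sX_j)$ of the $\SO(3)$ action on functions. Differentiating the defining relation $R^{-1}AR=A$ of an invariant $A\in\Psib^m(M)$ at $s=0$ in the three coordinate directions then yields $[A,L_j]=0$ for $j=1,2,3$. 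Since $A$ is pseudodifferential rather than differential, this differentiation is legitimized by the smoothness of the conjugation action $R\mapsto R^{-1}AR$ (equivalently, by testing the identity on a dense set of smooth spinor densities supported in the interior $r>0$, where the $s$-dependence is manifestly smooth, and extending by density).

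Granting $[A,L_j]=0$ for all $j$, the two assertions follow by algebra. For $\Lap_\theta$: as recorded in the derivation of \eqref{Ksquared} — combining $(\Sigma\cdot\bL)^2=\Lap_\theta-\Sigma\cdot\bL$ with the angular-momentum commutation relations — one has $\Lap_\theta=L_1^2+L_2^2+L_3^2$, the Casimir of the orbital $\SO(3)$ action, so $[A,L_j]=0$ forces $[A,\Lap_\theta]=0$. For $K=\beta(1+\bSigma\cdot\bL)=\beta+\sum_j(\beta\Sigma_j)L_j$: the coefficients $\beta$ and $\beta\Sigma_j$ are constant matrices, hence commute with the scalar operator $A$, while $A$ commutes with each $L_j$; therefore $[A,K]=\sum_j(\beta\Sigma_j)[A,L_j]=0$. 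Note that scalarity of $A$ is used only for the statement about $K$: the statement about $\Lap_\theta$ holds for any operator commuting with the $\SO(3)$ action, scalar or not.

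The one step requiring genuine care is the passage from the group identity $R^{-1}AR=A$ to the operator identities $[A,L_j]=0$ at the level of pseudodifferential operators. A purely symbolic argument — $\SO(3)$-invariance of $\sigma_{\bo}(A)$ makes the Poisson bracket with $\sigma_{\bo}(L_j)$ vanish — only shows that $[A,L_j]$ is of order $m-1$ rather than the expected $m$, whereas here we need it to vanish \emph{identically}; it is precisely for this reason that the group-theoretic argument, together with the attendant smoothness/density bookkeeping for the conjugation action, is the essential ingredient. Everything after $[A,L_j]=0$ is the elementary computation above.
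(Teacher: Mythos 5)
Your proof is correct and follows essentially the same route as the paper's: the paper also deduces $[A,L_j]=0$ from the fact that the flowout of $L_j$ lies in $\SO(3)$, and then concludes via $\Lap_\theta=\bL\cdot\bL$ and $K=\beta(1+\bSigma\cdot\bL)$ together with scalarity of $A$. You have merely supplied more detail (the differentiation of the conjugation identity and the remark on where scalarity is used) than the paper's two-line argument.
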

\begin{proof}
Let $A$ be invariant.  For each $j \in \{1,2,3\},$ $[A,L_j]=0$ since
the flowout of $L_j$ is in $SO(3).$  Since $\Lap=\bL \cdot \bL$ and
$K=\beta(1+\Sigma\cdot \bL)$ (and $A$ is scalar) we obtain the desired
commutation.
  \end{proof}

\begin{remark}
  Although invariant operators commute with $\Lap_{\theta}$ and $K$, they
  \emph{do not commute} with the matrices $\sigma_{r}$ (defined in
  \eqref{radialmatrices}).  Because $\sigma_{r}$ is independent of
  $r$, though, the terms arising from commuting an invariant operator
  with $\sigma_{r}$ will be microsupported away from the
  characteristic set and so will be handled by the elliptic estimate
  in the course of the hyperbolic estimate of
  Section~\ref{sec:hyperbolic-diffractive} below.
\end{remark}

In addition to the principal symbol map, describing the leading order
behavior of elements of $\Psib^{*}(M)$ in terms of the filtration,
there is a second map that measures the leading order behavior of the
operators at the front face $r=0,$ and which, together with the
principal symbol, measures the obstruction to compactness of
$\bo$-operators.  We will refer to this notion below only in the
simple case of b-differential operators, where it is simple to
describe, and we will work in just spatial variables on $X$ rather
than in spacetime.  Then this extra symbol, which is operator-valued,
is simply the new operator obtained by freezing coefficients of powers
of b-vector fields at the boundary.  If $A$ is given by
$$
\sum_{\smallabs{\alpha}+j\leq m} a_{j,\alpha}( r,\theta) (r D_r)^j
D_\theta^\alpha
$$
 we
thus define the \emph{indicial operator}
$$
I(A) =\sum_{\smallabs{\alpha}+j\leq m} a_{j,\alpha}( 0,\theta) (r D_r)^j
D_\theta^\alpha.
$$
$I$ is a homomorphism.  Operators in the
range of $I$, which
in terms of $r$ are now simply polynomials in $(rD_r),$ are thus
further simplified by Mellin transform in $r,$ hence the same
information is contained in the 
\emph{indicial family}
$$
I(A,\sigma) =\sum_{\smallabs{\alpha}+j\leq m} a_{j,\alpha}( 0,\theta) \sigma^j
D_\theta^\alpha.
$$
The \emph{boundary spectrum} of $A$ is then defined as
$$
\specb(A) =\{ \sigma \in \CC\colon I(A,\sigma) \text{ is not
  invertible on } \CI(S^2)\}.
$$
This set plays an important role in establishing the mapping
properties of b-operators---see \cite[Chapter 5]{Melrose:APS}.  It
also is a key ingredient in the identification of the domain of the
essentially self-adjoint Hamiltonian in Section~\ref{sec:self-adjoint} below.

Let $L^2_\bo(M)$ denote the space of square integrable functions with respect to
the \emph{b-density}
$$
\frac{dr}r \, dt \, d\theta.
$$
Note in particular that this space differs from $L^2(M),$ which here
denotes the space
with the usual metric density, and in particular
$$
L^2(M)=r^{-3/2} L^2_{\bo}(M).
$$
When emphasizing the use of the metric density, we will in fact write
$$L^2_g(M)\equiv L^2(M)$$ for added clarity.
We let $\bH^m(M)$ denote the
Sobolev space of order $m$ \emph{relative to $L^2_\bo(M)$} corresponding to
the algebras $\Diffb^m(M)$ and $\Psib^m(M)$.  In other words, for
$m\geq 0$, fixing $A\in\Psib^m(M)$ elliptic, one has $w\in\bH^m(M)$ if
$w\in L^2_\bo(M)$ and $Aw\in L^2_\bo(M)$; this is independent of the
choice of the elliptic $A$.  For $m$ negative, the space is defined by
duality.  (For $m$ a positive integer, one can alternatively give a
characterization in terms of boundedness of elements of $\Diffb^m(M)$.)  Let
$\bH^{m,l}(M)=r^{l}\bH^m(M)$ denote the corresponding weighted
spaces.  We will also use all these notions on $X$ rather than $M,$
simply omitting the $t$ variable.  Sometimes it will be convenient to
use the Sobolev spaces defined with respect to the \emph{metric}
density rather than the b density we have used here, and to that end
we set (on either $M$ or $X$)
$$
H^{m}_{\bo, g} \equiv r^{-3/2} \bH^m.
$$

Associated to an operator $A \in \Psib^m(M)$ is its
\emph{microsupport},
$$
\WFb'(A)\subset\Sbstar M.
$$
This closed subset is the essential support of the total symbol,
just as in the usual pseudodifferential calculus, and obeys the usual
microlocality property
$$
\WFb'(AB) \subset \WFb'(A) \cap \WFb'(B).
$$
Conversely, there is a notion of \emph{b-ellipticity} at a point,
obtained from the invertibility of the principal symbol.  Note that global
ellipticity is not sufficient to make an operator Fredholm over a
compact set in $X$; additional
decay at $r=0$ is required to ensure that the remainder term in a parametrix
argument is compact.

While there is a notion of wavefront set (lying in $\Sbstar M$)
associated to the b-calculus, we will require a slight variant of this
wavefront set in our estimates, hence we postpone discussion of $\WFb$
until we have introduced differential-b-pseudodifferential operators.

\subsection{Edge Calculus}

Full technical details on the edge calculus can be found in
Mazzeo \cite{MR93d:58152}.

The space of \emph{edge-vector fields}, denoted $\mathcal{V}_{\eo}(M),$ is the
vector space of vector fields on $M$
tangent to $\pd M$ as well as to the fibers of the fibration
$\blowdown: M \to \RR^4;$
they are spanned over $C^{\infty}(M)$ by the vector fields
$r\pd[r],$ $r \pd[t],$ and $\pd[\theta]$.  Like the b vector fields, $\mathcal{V}_{\eo}(M)$ forms a Lie
algebra. The set of e-differential operators, $\Diffe^{*}(M)$, is the
universal enveloping algebra of this Lie algebra:
it is the filtered algebra consisting of operators of the form
\begin{equation}\label{exampleeoperator}
A=\sum_{\smallabs{\alpha}+j+k\leq m} a_{j,k,\alpha}( r,t,\theta) (r D_r)^j(rD_t)^k
D_\theta^\alpha \in \Diffe^m(M)
\end{equation}
 (locally near $\pa M$) with the coefficients $a_{j,k,\alpha} \in \CI(M).$

The
edge-pseudodifferential operators $\Psie^{*}(M)$ are the ``microlocalization''
of this Lie algebra, formally consisting of (properly supported) operators of the form
$$
b(r,t, \theta,r D_r,r D_t,  D_\theta)
$$
with $b(r,t,\theta,\xi,\tau, \eta)$ a Kohn-Nirenberg symbol.  The
(non-canonical) map
from total symbols to operators will be denote $\Op_b.$

For the commutator arguments below, we will require a doubly-filtered
version of the edge calculus, where we also track variable growth or
decay at $r=0.$ In particular, if we set
$$
\Psie^{m,l}(M)=r^{-l} \Psie^{m}(M),
$$
then this is a doubly filtered algebra.  We remark that
the operators
that are residual in the sense of both decay and regularity are
$$
\Psie^{-\infty,-\infty}(M);
$$
the reader is cautioned that different conventions exist in the
literature for the sign convention on the $l$ index.

The space $\mathcal{V}_{\eo}(M)$ is in fact the space of sections of a
smooth vector bundle over $M,$ the \emph{edge tangent bundle}, denoted
$\Te M.$ The sections of this bundle are locally spanned by
the vector fields $r\pa_r,r\pa_t, \pa_\theta.$ The dual bundle to $\Te M$ is
denoted $\Testar M$ and has sections locally spanned over $\CI(M)$ by
the one-forms $dr/r, dt/r, d\theta.$

The symbols of operators in $\Psib^*(M)$ are thus Kohn-Nirenberg
symbols defined on $\Testar M.$ The principal symbol map, denoted
$\sigma_{\eo},$ maps the classical subalgebra of
$\Psie^{m,l}(M)$ to 
$r^{-l}$ times homogeneous functions of order $m$ on $\Tbstar M.$ In the particular
case of the subalgebra $\Diff_{\eo}^{m,l}(M),$ if $A$ is given by
\eqref{exampleeoperator} we have
$$
\sigma_{\eo}(r^{l} A)=r^{l} \sum_{\smallabs{\alpha}+j+k= m} a_{j,k,\alpha}(r,t,\theta) \xi^j\lambda^k
\zeta^\alpha
$$
where $\xi,\lambda,\zeta$ are ``canonical'' fiber coordinates on $\Testar M$
defined by specifying that the canonical one-form be 
$$
\xi \frac{dr}r+\lambda \frac{dt}r + \zeta \cdot d\theta
$$

As before we let $L^2_\bo(M)$ denote the space of square integrable functions with respect to
the \emph{b-density}
$$
\frac{dr}r \, dt \, d\theta.
$$
We let $\He^m(M)$ denote the
Sobolev space of order $m$ \emph{relative to $L^2_\bo(M)$} corresponding to
the algebras $\Diffe^m(M)$ and $\Psie^m(M)$.  In other words, for
$m\geq 0$, fixing $A\in\Psie^m(M)$ elliptic, one has $w\in\He^m(M)$ if
$w\in L^2_\bo(M)$ and $Aw\in L^2_\bo(M)$; this is independent of the
choice of the elliptic $A$.  For $m$ negative, the space is defined by
duality.  (For $m$ a positive integer, one can alternatively give a
characterization in terms of $\Diffe^m(M)$.)  Let
$\He^{m,l}(M)=r^{l}\He^m(M)$ denote the corresponding weighted
spaces.

There is a notion of edge microsupport
$$
\WFe'(A) \subset \Sestar M,
$$
as well as of edge ellipticity satisfying the usual properties.

We recall also that associated to the calculus $\Psie^{*,*}(M)$ is
associated a notion of Sobolev wavefront set:
$\WFe^{m,l}(w)\subset \Sestar M$ is defined only for $w\in \He^{-\infty,l}$
(since $\Psie(M)$ is not commutative to leading order in
the decay index); the definition is then $\alpha\notin\WFe^{m,l}(w)$ if
there is $Q\in\Psie^{0,0}(M)$ elliptic at $\alpha$ such that $Qw\in
\He^{m,l}(M)$, or equivalently if there is $Q'\in\Psie^{m,l}(M)$
elliptic at $\alpha$ such that $Q'w\in L^2_{\bo}(M)$.
See \cite[Section 5]{Melrose-Wunsch1} for a fuller list of the
properties of the edge calculus and wavefront set.

\subsection{The differential-pseudodifferential $\bo$-calculus}

The crux of the proof of the diffractive theorem in
Section~\ref{sec:diffractive} below lies in understanding the
interaction between differential operators and the pseudodifferential
$\bo$-calculus.
A crucial ingredient below will be the \emph{Hardy inequality}
\begin{lemma}\label{lemma:Hardy}
If $u \in H^1(\RR^n)$ with $n \geq 3,$ then
$$
\frac{(n-2)^2}{4} \int \frac{\abs{u}^2}{r^2} \, dx \leq  \int
\abs{\nabla u }^2 \, dx.
$$
\end{lemma}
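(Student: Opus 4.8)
The statement to prove is the classical Hardy inequality on $\RR^n$ for $n\ge 3$. Let me sketch a proof plan.

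The plan is to prove this by the standard "completion of the square" / integration by parts argument, first for $u\in C_c^\infty(\RR^n\setminus\{0\})$ and then by density.

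First I would reduce to the case $u\in C_c^\infty(\RR^n\setminus\{0\})$: such functions are dense in $H^1(\RR^n)$ when $n\ge 3$ (this uses $n\ge 3$ in an essential way, since a logarithmic cutoff near the origin has small $H^1$ norm precisely when $n\ge 3$), and both sides of the inequality are continuous under $H^1$-convergence along such a sequence once we know the inequality holds with a fixed constant. Actually it is cleanest to first establish the inequality for smooth compactly supported $u$ (no puncture needed if we are careful, but puncturing avoids any worry at the origin) and then pass to the limit.

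The key computation: for real-valued $u$ (the complex case follows by applying the real case to $\mathrm{Re}\,u$ and $\mathrm{Im}\,u$ and adding, since $|\nabla u|^2 = |\nabla\,\mathrm{Re}\,u|^2 + |\nabla\,\mathrm{Im}\,u|^2$ and similarly for $|u|^2$), write the radial derivative $\partial_r u = \hat x\cdot\nabla u$ and note $|\partial_r u|\le |\nabla u|$. Then integrate by parts using $\operatorname{div}(x/r^2) = (n-2)/r^2$:
$$
(n-2)\int \frac{u^2}{r^2}\,dx = \int u^2\, \operatorname{div}\!\Big(\frac{x}{r^2}\Big)\,dx = -2\int u\,\partial_r u\, \frac{1}{r}\,dx \le 2\Big(\int\frac{u^2}{r^2}\Big)^{1/2}\Big(\int|\partial_r u|^2\Big)^{1/2}.
$$
Cancelling one factor of $(\int u^2/r^2)^{1/2}$ (which is finite and nonzero in the nontrivial case) gives $(n-2)(\int u^2/r^2)^{1/2}\le 2(\int|\nabla u|^2)^{1/2}$, i.e. the claimed inequality. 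Equivalently, one can phrase this without Cauchy–Schwarz by completing the square: $\int|\nabla u|^2 \ge \int|\partial_r u|^2 = \int|\partial_r u + \tfrac{n-2}{2r}u|^2 - (n-2)\int\frac{u\,\partial_r u}{r} - \tfrac{(n-2)^2}{4}\int\frac{u^2}{r^2}$, and the middle term integrates (by parts) to $+\tfrac{(n-2)^2}{2}\int u^2/r^2$, leaving $\int|\nabla u|^2 \ge \int|\partial_r u+\tfrac{n-2}{2r}u|^2 + \tfrac{(n-2)^2}{4}\int u^2/r^2 \ge \tfrac{(n-2)^2}{4}\int u^2/r^2$.

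The only real obstacle is the approximation argument: one must check that $(\int |u_k|^2/r^2)$ stays bounded along an approximating sequence $u_k\to u$ in $H^1$ so that Fatou's lemma applies to conclude $\int|u|^2/r^2\le \liminf \int|u_k|^2/r^2 \le \tfrac{4}{(n-2)^2}\liminf\int|\nabla u_k|^2 = \tfrac{4}{(n-2)^2}\int|\nabla u|^2$; this is immediate once the inequality is known with a uniform constant on the dense class, which is exactly what the computation above supplies. I expect this density/Fatou step to be the only point requiring care, and it is entirely routine.

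\begin{proof}
By writing $u = u_1 + i u_2$ with $u_1,u_2$ real-valued and noting that $\abs{\nabla u}^2 = \abs{\nabla u_1}^2 + \abs{\nabla u_2}^2$ and $\abs u^2 = u_1^2 + u_2^2$, it suffices to treat the case of real-valued $u$. Moreover, since $C_c^\infty(\RR^n\setminus\{0\})$ is dense in $H^1(\RR^n)$ for $n\ge 3$, and the inequality is preserved under passing to the limit (using Fatou's lemma on the left-hand side and $H^1$-convergence on the right, once the inequality is known with the fixed constant $4/(n-2)^2$ on the dense class), we may assume $u\in C_c^\infty(\RR^n\setminus\{0\})$.

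For such $u$, let $\partial_r u = \frac xr\cdot\nabla u$ denote the radial derivative, so that $\abs{\partial_r u}\le\abs{\nabla u}$ pointwise. Since $\operatorname{div}\!\big(\tfrac{x}{r^2}\big) = \tfrac{n-2}{r^2}$ away from the origin, integration by parts (with no boundary term, as $u$ is compactly supported away from $0$) gives
\begin{equation*}
  (n-2)\int \frac{u^2}{r^2}\,dx = \int u^2\,\operatorname{div}\!\Big(\frac{x}{r^2}\Big)\,dx = -2\int \frac{u\,\partial_r u}{r}\,dx.
\end{equation*}
Equivalently, $\tfrac{n-2}{2}\int \frac{u^2}{r^2}\,dx = -\int \frac{u\,\partial_r u}{r}\,dx$. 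Therefore
\begin{equation*}
  \int \abs{\nabla u}^2\,dx \ge \int \abs{\partial_r u}^2\,dx = \int \Big(\partial_r u + \frac{n-2}{2r}u\Big)^2\,dx - (n-2)\int \frac{u\,\partial_r u}{r}\,dx - \frac{(n-2)^2}{4}\int\frac{u^2}{r^2}\,dx.
\end{equation*}
Using the identity above, the middle term equals $\tfrac{(n-2)^2}{2}\int \frac{u^2}{r^2}\,dx$, so
\begin{equation*}
  \int \abs{\nabla u}^2\,dx \ge \int \Big(\partial_r u + \frac{n-2}{2r}u\Big)^2\,dx + \frac{(n-2)^2}{4}\int\frac{u^2}{r^2}\,dx \ge \frac{(n-2)^2}{4}\int\frac{u^2}{r^2}\,dx,
\end{equation*}
which is the claimed inequality for $u\in C_c^\infty(\RR^n\setminus\{0\})$. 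The general case follows by the density and Fatou arguments described above.
\end{proof}
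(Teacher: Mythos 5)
Your proof is correct; the integration by parts, the completion of the square, and the reduction to $C_c^\infty(\RR^n\setminus\{0\})$ by density (valid for $n\geq 3$) followed by Fatou's lemma are all sound. The paper states the Hardy inequality as a classical fact without proof, and your argument is the standard one, so there is nothing to compare.
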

We will use this inequality in $\RR^3,$ where it reads
\begin{equation}\label{Hardyconstant}
\smallnorm{r^{-1} u} \leq 2\smallnorm{\pa_r u}.
\end{equation}

As the Dirac operator is not a $\bo$-operator, it is
convenient to measure regularity with respect to the classical Sobolev
space $H^{1},$ pulled back to $X.$
\begin{lemma}
The pullback $\blowdown^{*}(H^1)$ agrees with $\dom=r^{1}
H^1_{\bo,g} =r^{-1/2}\bH^1$
  locally near $r=0,$ and this pullback is injective.
  \end{lemma}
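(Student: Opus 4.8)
The plan is to compute the pullback $\blowdown^* (H^1(\RR^3))$ directly in polar coordinates near $r=0$ and to match it against the weighted b-Sobolev space. Recall that the metric density on $\RR^3$ pulls back to $r^2 \, dr \, d\theta$, while the b-density on $X$ is $\tfrac{dr}{r}\, d\theta$; hence $L^2_g(X) = r^{-3/2} L^2_\bo(X)$ near $r=0$, consistent with the identity $L^2(M) = r^{-3/2}L^2_\bo(M)$ recorded above. Thus a function $u$ on $\RR^3$ lies in $L^2$ near the origin iff $\blowdown^* u \in r^{-3/2} L^2_\bo = H^0_{\bo,g}$, i.e.\ $r^{3/2}\blowdown^* u \in L^2_\bo$. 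First I would establish this $L^2$ statement carefully, and then I would promote it one derivative.

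For the $H^1$ statement, the point is to express the flat gradient $\nabla$ on $\RR^3$ in terms of b-vector fields. In polar coordinates $\nabla$ has radial component $\pa_r$ and angular component $r^{-1}\pa_\theta$ (schematically), so $\nabla = r^{-1}(r\pa_r, \pa_\theta)$ up to smooth bundle maps; that is, $\nabla = r^{-1} W$ with $W$ a vector of b-vector fields spanning (together with the identity) $\Diffb^1(X)$ near $r=0$. Consequently, for $u$ supported near the origin, $\|u\|_{H^1}^2 = \|u\|_{L^2}^2 + \|\nabla u\|_{L^2}^2$ is comparable to $\|r^{3/2} u\|_{L^2_\bo}^2 + \|r^{3/2} r^{-1} W u\|_{L^2_\bo}^2 = \|r^{3/2}u\|_{L^2_\bo}^2 + \|r^{1/2} W u\|_{L^2_\bo}^2$. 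The first term is dominated (for $u$ supported near $r=0$) by the second via the Hardy inequality \eqref{Hardyconstant}, $\|r^{-1}u\| \le 2\|\pa_r u\|$, which is exactly the statement that the weight $r^{1/2}$ on the first-order b-derivatives controls the weight $r^{-1/2}$ on $u$; this is where the hypothesis $n\ge 3$ (so that the Hardy constant is positive) is used. Hence the $H^1$ norm near the origin is equivalent to $\sum_{|\alpha|+j\le 1}\|r^{1/2}(rD_r)^j D_\theta^\alpha u\|_{L^2_\bo}$, which is precisely the norm of $r^{1/2}\bH^1 = r^{1}\,(r^{-1/2}\bH^1) = r^1 H^1_{\bo,g}$ as claimed — and $r^{-1/2}\bH^1 = H^1_{\bo,g}$ by definition of $H^m_{\bo,g}$, so $\dom = r^{1}H^1_{\bo,g} = r^{1/2}\bH^1$; I will double-check the bookkeeping so the three displayed descriptions of $\dom$ agree.

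For injectivity of the pullback: $\blowdown\colon X\to\RR^3$ is a diffeomorphism off the front face $\{r=0\}$, and the front face has measure zero with respect to $r^2\,dr\,d\theta$, so two $H^1$ functions on $\RR^3$ with the same pullback agree a.e.\ on $\RR^3\setminus\{0\}$, hence agree in $H^1(\RR^3)$. Equivalently, $\blowdown^*$ is the composition of the a.e.-defined pullback by a measure-preserving diffeomorphism (off a null set) with the obvious inclusion, so it has trivial kernel on $L^2_\loc$, a fortiori on $H^1$. The main obstacle — really the only nontrivial point — is the equivalence of norms near $r=0$, specifically getting the weights exactly right on all of $\nabla$ (radial \emph{and} angular parts simultaneously) and invoking Hardy to absorb the zeroth-order term; everything else is bookkeeping in the definitions of $\bH^m$, $H^m_{\bo,g}$, and the densities.
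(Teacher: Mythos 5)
Your argument is correct and follows essentially the same route as the paper's: write $\nabla$ in polar coordinates as $r^{-1}$ times b-vector fields, convert the metric density to the b-density to match weights, and invoke the Hardy inequality to control the zeroth-order term with the stronger weight $r^{1/2}$ (equivalently $\|r^{-1}u\|_{L^2_g}$) by the radial derivative; your measure-zero argument for injectivity is a harmless variant of the paper's density argument. One bookkeeping slip to fix in your last paragraph: the norm $\sum_{|\alpha|+j\le 1}\|r^{1/2}(rD_r)^jD_\theta^\alpha u\|_{L^2_\bo}$ you correctly arrive at is the norm of $r^{-1/2}\bH^1$ (i.e.\ $r^{1/2}u\in\bH^1$), not of $r^{1/2}\bH^1$, and by the paper's definition $H^1_{\bo,g}=r^{-3/2}\bH^1$, so $\dom=r\,H^1_{\bo,g}=r^{-1/2}\bH^1$ exactly as in the statement --- your analysis lands on the right space and only the labels in the final identification are off by a power of $r$.
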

  \begin{proof}
We take all functions below to be supported in the unit ball.
    
The injectivity of the pushforward is assured by the fact that for all
$u \in H^1(\RR^3),$ if 
$\chi(r)$ is a cutoff function equal to $1$ for $r>2$ and $0$  for $r<1,$
the approximation $\chi(r/\ep)u$ converges to $u$ in $H^1(\RR^3)$
norm, i.e.\ elements supported away from the origin are dense in
$H^1,$ and it suffices to show that the pushforward is bounded above
and below as a Hilbert space map when acting on these distributions.
Since $\nabla u \sim (\pa_r u, r^{-1} \pa _\theta u),$ the $H^1$ norm
of $u$ is bounded by the $r H^1_{\bo,g}$ norm of $\blowdown^{*} u;$ the
Hardy inequality ensures that $\smallnorm{r^{-1}\blowdown_* u}_{L^2}$ is controlled by the $H^1$
norm of $u,$ which then shows that the $r H_{\bo,g}^1$ norm of
$\blowdown^{*} u$ is controlled by the $H^1$ norm of $u.$
\end{proof}

 In Section~\ref{sec:diffractive}, we let $H^1(M)$ be the closure in
 the $H^1(\RR^{1+3})$ norm (identified via the blowdown $\blowdown$)
 of $\mathcal{C}^\infty_c(M).$  The lemma above can be rephrased as
 the statement that
 \begin{align*}
   H^{1}(M) &= \blowdown^{*}H^{1}(\RR^{1+3}), \\
   H^{1}(X) &= \blowdown^{*}H^{1}(\RR^{3}).
 \end{align*}

In this paper we will only be dealing with functions compactly supported in a fixed (large) neighborhood of $x=0$, and we note that on such functions,
$$
\norm{D_t u}^2 + \norm{D_r u}^2 + \norm{r^{-1}\nabla_\theta  u}^2
$$
\emph{is equivalent to $\norm{u}_{H^1}^2.$}  We will use this equivalence heavily.

To facilitate the accounting of error terms in
Section~\ref{sec:diffractive}, we will use the terminology
\begin{equation*}
  A \in \Diff^{m}\!\Psib^{s}
\end{equation*}
if
\begin{equation*}
  A = \sum_{j+k \leq m} r^{-j}D_{r}^{k}A_{j,k}
\end{equation*}
with $A_{j,k} \in \Psib^{s}$.  (Cf.\ \cite[Definition
2.3]{Va:08}; here we allow powers of $r^{-1}$ in addition to
differentiations.)  For such operators, we write
\begin{equation*}
  \WFb' A = \cup _{j,k} \WFb' A_{j,k}.
\end{equation*}
Vasy~\cite{Va:08} made extensive use of these spaces of operators in the setting
of manifolds with corners; many of the results below have analogues in
that paper.

The following lemma from~\cite[Lemma 8.6]{MVW1} (cf.\
also~\cite[Lemma 2.8]{Va:08}) shows that $\Diff^{*}\!\Psib^{*}$ forms
an algebra.
\begin{lemma}
  \label{lemma:differentialcommutator}
  Let $A \in \Psib^{m}(M)$ and let $a = \sigmab(A)$.  Then
  \begin{equation*}
    [D_{r}, A] = B + CD_{r},
  \end{equation*}
  with
  \begin{align*}
    B \in \Psib^{m}(M), &\quad C \in \Psib^{m-1}(M), \\
    \sigmab(B) = \frac{1}{i}\pd[r]a, & \quad \sigmab(C) =
                                       \frac{1}{i}\pd[\sigma] a;
  \end{align*}
  moreover,
  \begin{equation*}
    [r^{-1}, A] = r^{-1}C_{R} = C_{L}r^{-1},
  \end{equation*}
  where $C_{\bullet} \in \Psib^{m-1}(M)$ with
  \begin{equation*}
    \sigmab (C_{\bullet}) = \frac{1}{i}\pd[\sigma]a.
  \end{equation*}
\end{lemma}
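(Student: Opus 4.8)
The plan is to prove this by routine symbol calculus, treating the two displayed identities separately. For the commutator with $D_r$: since $A \in \Psib^m(M)$ is locally of the form $b(r,t,\theta, rD_r, D_t, D_\theta)$, I would write $D_r = r^{-1}(rD_r)$ is \emph{not} itself a b-operator, so I cannot simply appeal to the b-calculus. Instead I would observe that conjugation by the Mellin-type shift—or more elementarily, direct computation on the oscillatory-integral representation of $A$—shows that $[D_r, A]$ is again an operator built from b-derivatives with one extra factor that is either an ordinary derivative $D_r$ or a b-symbol of order one lower. Concretely, writing $A$ via its (full left) symbol $a(r,t,\theta,\sigma,\tau,\eta)$ with $\sigma$ dual to $rD_r$, one has $[D_r, A] = \frac{1}{i}(\partial_r^{\text{tot}} A)$ where $\partial_r^{\text{tot}}$ differentiates both the explicit $r$-dependence of the coefficients \emph{and} the $r$ appearing in the combination $rD_r$. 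The first piece contributes $B$ with $\sigmab(B) = \frac1i \partial_r a$ (explicit $r$-differentiation of the symbol, holding $\sigma$ fixed); the second piece arises because $D_r(rD_r) = (rD_r)D_r + D_r$, i.e.\ $[D_r, rD_r] = D_r$, so differentiating the $\sigma$-slot and then restoring the operator $rD_r$ to the right produces a term of the form $C D_r$ with $\sigmab(C) = \frac1i \partial_\sigma a$, which indeed drops the order by one since $\partial_\sigma$ of a symbol of order $m$ has order $m-1$.

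For the commutator with $r^{-1}$: here $r^{-1}$ \emph{does} have a clean interaction with the b-calculus because $[r^{-1}, rD_r] = r^{-1}$ (equivalently $r^{-1}$ commutes with $rD_r$ up to a factor of $r^{-1}$ on one side), and $r^{-1}$ commutes with $D_t, D_\theta$. So $[r^{-1}, A]$, computed symbolically, picks up exactly one $\sigma$-derivative of the symbol together with a single factor of $r^{-1}$, and the only bookkeeping is whether that $r^{-1}$ sits on the left or the right. Pulling it to the right gives $r^{-1} C_R$ with $C_R \in \Psib^{m-1}$, pulling it to the left gives $C_L r^{-1}$; since $r^{-1} C_R = C_L r^{-1}$ forces $C_L = r^{-1}C_R r$, and conjugation by $r$ preserves $\Psib^{m-1}(M)$ and the leading symbol (it only shifts the indicial family), both $C_R$ and $C_L$ lie in $\Psib^{m-1}(M)$ with the same principal symbol $\frac1i \partial_\sigma a$. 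This is essentially the content of \cite[Lemma 2.8]{Va:08} and \cite[Lemma 8.6]{MVW1}, and I would simply reduce to those.

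The only genuine subtlety—and the step I would be most careful about—is the claim that the remainder $C$ (resp.\ $C_R, C_L$) is a b-operator of order $m-1$ \emph{with the stated principal symbol}, rather than merely a b-operator of order $m$: one must check that the ``extra'' $\partial_\sigma$ hitting the symbol really does gain an order, and that no lower-order-in-$r$ but same-order-in-$\sigma$ terms sneak back in when one reassembles the operator. This follows from the standard fact that $\partial_\sigma: S^m \to S^{m-1}$ on Kohn–Nirenberg symbols, combined with the observation that the commutator $[D_r, rD_r] = D_r$ is \emph{exactly} one order lower in the b-sense (it is an honest $D_r$, which as a b-operator is $r^{-1}(rD_r)$, order $1$—but it multiplies $C$ which is order $m-1$, so $CD_r$ has the right total structure for membership in $\Diff^1\Psib^{m-1} \subset \Diff^*\Psib^*$). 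In practice I would give the argument on the model case $A = a(r,t,\theta)(rD_r)^j D_t^k D_\theta^\alpha$ by the Leibniz rule and then invoke density/continuity of the symbol calculus to pass to general $A \in \Psib^m(M)$, exactly mirroring the proofs in the cited references.
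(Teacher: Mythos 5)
The paper offers no proof of this lemma at all: it is imported verbatim from \cite[Lemma 8.6]{MVW1} (cf.\ \cite[Lemma 2.8]{Va:08}), and your sketch is precisely the standard symbol-calculus argument underlying those references, so your approach is essentially the same as the paper's. Your identification of where $B$ and $C$ come from (base-variable versus fiber-variable differentiation of the left symbol, with $\pd[\sigma]\colon S^m\to S^{m-1}$ accounting for the order drop) and your treatment of $r^{-1}$ via conjugation by $r$ (which indeed preserves $\Psib^{m-1}$ and the principal symbol) are both correct. The only defect is a recurring factor of $1/i$ in your intermediate operator identities: one has $[D_r, rD_r]=\tfrac{1}{i}D_r$ and $[r^{-1}, rD_r]=\tfrac{1}{i}r^{-1}$, not $D_r$ and $r^{-1}$ as written; this factor is exactly what produces the $\tfrac{1}{i}$ in $\sigmab(C)=\tfrac{1}{i}\pd[\sigma]a$, and since you state the final symbols correctly the slip does not propagate, but the displayed identities should be fixed.
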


As we will measure $\bo$-regularity with respect to $H^{1}$, we also
need to know that $\Psib^{0}$ is bounded on this space.
\begin{lemma}
  \label{lemma:psib-on-h1}
  Given $A \in \Psib^{0}$, there is some $C > 0$ so that for all $u
  \in H^{\pm 1}$, 
  \begin{equation*}
    \norm{Au}_{H^{\pm 1}}\leq C \norm{u}_{H^{\pm 1}}.
  \end{equation*}
\end{lemma}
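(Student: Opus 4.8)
The plan is to prove $L^2$-boundedness on $H^1$ first and then obtain the $H^{-1}$ statement by duality. The point is that $H^1(M)$ (locally near $r = 0$) is characterized by the equivalence of norms noted just above, namely
\begin{equation*}
  \norm{u}_{H^1}^2 \sim \norm{D_t u}^2 + \norm{D_r u}^2 + \norm{r^{-1}\nabla_\theta u}^2,
\end{equation*}
which I will abbreviate by saying that $D_t$, $D_r$, and $r^{-1}\nabla_\theta$ all map $H^1 \to L^2 = L^2_{\bo,g}$ boundedly, and conversely these operators (together with the identity, to control $\norm{u}_{L^2}$, which is harmless on compactly supported functions via, e.g., Poincaré) control the $H^1$ norm. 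So to bound $\norm{Au}_{H^1}$ for $A \in \Psib^0$ it suffices to bound $\norm{D_t A u}$, $\norm{D_r A u}$, and $\norm{r^{-1}\nabla_\theta A u}$ in $L^2_{\bo}$.

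First I would commute each of the three operators $D_t$, $D_r$, $r^{-1}\nabla_\theta$ past $A$. For $D_t$ and $\pd[\theta]$ this is immediate: these are (up to the $r^{-1}$ weight) b-differential operators of order zero in the b-sense — well, $D_t \in \Diffb^1$ and $\nabla_\theta \in \Diffb^1$ — so the commutators $[D_t, A]$ and $[\nabla_\theta, A]$ lie in $\Psib^0$, giving
\begin{equation*}
  D_t A u = A D_t u + [D_t, A] u, \qquad r^{-1}\nabla_\theta A u = A' (r^{-1}\nabla_\theta u) + [\,\cdot\,] ,
\end{equation*}
where one must be slightly careful with the $r^{-1}$ weight: $r^{-1}\nabla_\theta A = r^{-1} A \nabla_\theta + r^{-1}[\nabla_\theta, A]$, and by Lemma~\ref{lemma:differentialcommutator} (the $[r^{-1}, A]$ part) one has $r^{-1} A = A'' r^{-1} + C'' r^{-1}$ with $A'', C'' \in \Psib^0$; similarly $r^{-1}[\nabla_\theta, A] \in r^{-1}\Psib^0$ and $r^{-1}\Psib^0 = \Psib^0 r^{-1}$ again by that lemma, so this term is a $\Psib^0$ applied to $r^{-1}\nabla_\theta u$ plus lower-order junk. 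The genuinely substantive commutator is $[D_r, A]$: by Lemma~\ref{lemma:differentialcommutator}, $[D_r, A] = B + C D_r$ with $B \in \Psib^0$ and $C \in \Psib^{-1} \subset \Psib^0$, so
\begin{equation*}
  D_r A u = A D_r u + B u + C D_r u.
\end{equation*}
In every case we have reduced to applying an operator in $\Psib^0$ to one of $u$, $D_t u$, $D_r u$, $r^{-1}\nabla_\theta u$ — i.e., to an element of $L^2_{\bo}$, using the $H^1 \to L^2$ mapping properties in the reverse direction.

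It therefore remains to invoke $L^2_{\bo}$-boundedness of $\Psib^0(M)$: an operator in $\Psib^0$ is bounded on $L^2_{\bo}(M)$. This is the standard Calderón–Vaillancourt / Hörmander square-function boundedness theorem for the $\bo$-calculus and is part of the foundational material in Melrose~\cite{Melrose:APS} (one can, as usual, reduce to the case of a residual remainder plus an operator with small symbol, or use the $T^*T$ trick: $C - A^*A \in \Psib^{-1}$ after subtracting a constant, and iterate). Applying this to $A, A', A'', B, C$, etc., and summing the resulting estimates, yields $\norm{Au}_{H^1} \leq C\norm{u}_{H^1}$. The $H^{-1}$ bound then follows by duality: for $u \in H^{-1}$ and $v \in H^1$, $\abs{\langle Au, v\rangle} = \abs{\langle u, A^* v\rangle} \leq \norm{u}_{H^{-1}} \norm{A^* v}_{H^1} \leq C \norm{u}_{H^{-1}}\norm{v}_{H^1}$, using that $A^* \in \Psib^0$ and the $H^1$ bound just proved; taking the supremum over $\norm{v}_{H^1} \leq 1$ gives the claim. (One small point to attend to: the $\norm{u}_{L^2}$ term in the $H^1$ norm; since all functions here are supported in a fixed compact neighborhood of $x = 0$, this is controlled by $\norm{\nabla u}_{L^2}$ up to a constant, so it does not disturb the argument.)

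The main obstacle — really the only non-bookkeeping point — is the interaction of the $r^{-1}$ weight in the $\theta$-derivative term with the $\Psib^0$ operators: one must use the commutation identities of Lemma~\ref{lemma:differentialcommutator} for $r^{-1}$ to move the weight through $A$ without generating terms that are more singular than $r^{-1}\Psib^0$ (equivalently $\Psib^0 r^{-1}$). Everything else is a direct consequence of the algebra property of $\Psib^*$, Lemma~\ref{lemma:differentialcommutator}, the norm equivalence for $H^1(M)$ recalled above, and $L^2_{\bo}$-boundedness of order-zero b-pseudodifferential operators.
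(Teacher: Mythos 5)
Your proposal is correct and follows essentially the same route as the paper's proof: commute $D_r$ and $r^{-1}D_\theta$ through $A$ via Lemma~\ref{lemma:differentialcommutator}, control the resulting $\Psib^0$ and $\Psib^{-1}$ commutator terms using $L^2$-boundedness of order-zero b-operators (with the Hardy inequality absorbing the leftover $r^{-1}u$ term you call ``lower-order junk''), and obtain the $H^{-1}$ case by duality.
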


\begin{proof}
  We begin by proving boundedness on $H^1.$
  By Lemma~\ref{lemma:differentialcommutator}, $[D_{r}, A] = S +
  TD_{r}$, where $S \in \Psib^{0}$ and $T \in \Psib^{-1}$, so that
  \begin{align*}
    \norm{D_{r}Au}_{L^{2}_{g}} &\leq \norm{A D_{r}u}_{L^{2}_{g}} +
                             \norm{[D_{r}, A]u}_{L^{2}_{g}}   \\
                           &\leq \norm{AD_{r}u}_{L^{2}_{g}} +
                             \norm{Su}_{L^{2}_{g}} +
                             \norm{TD_{r}u}_{L^{2}_{g}} \\
                           &\leq C \left( \norm{D_{r}u}_{L^{2}_{g}} +
                             \norm{u}_{L^{2}_{g}}\right) \leq C \norm{u}_{H^{1}}.
  \end{align*}
  Similarly, we may use Lemma~\ref{lemma:differentialcommutator} to write
  \begin{equation*}
    \left[ \frac{1}{r}D_{\theta} , A \right] = \frac{1}{r} \left[
      D_{\theta} , A\right] + \left[ \frac{1}{r} , A\right] D_{\theta} =
      \frac{1}{r} S + T\left( \frac{1}{r}D_{\theta}\right),
  \end{equation*}
  where $S \in \Psib^{0}$ and $T \in \Psib^{-1}$, so that by the
  pseudodifferential calculus and the Hardy inequality we may bound
  \begin{equation*}
    \norm{\frac{1}{r}D_{\theta} A u}_{L^{2}_{g}} \leq C \norm{u}_{H^{1}}.
  \end{equation*}
The boundedness on $H^{-1}$ now follows by duality.
\end{proof}

The previous two lemmas then motivate a definition of $H^{1}$ (and $H^{-1}$)-based
$\bo$-wavefront set.
\begin{definition}
  Let $u \in H^{\pm 1}(M).$  Let $\rho \in \Tbstar M\backslash o.$  We define
  $$
  \rho \notin \WFb^{\pm 1,m} u
  $$
if there exists $A \in \Psib^m(M),$ elliptic at $\rho,$ such that $A
u\in H^{\pm 1}.$

Similarly, for $\rho \in \Tbstar M \backslash o$, we define
\begin{equation*}
  \rho \notin \WFb^{m}u
\end{equation*}
 if there exists $A\in \Psib^{m}(M)$, elliptic at $\rho$, such that
 $Au \in L^{2}_{g}$.
\end{definition}

\begin{remark}
  At this moment we provide the reader with two notes of caution:
  First, observe that we measure $\bo$-regularity with respect to
  $L^{2}_{g}$ rather than $L^{2}_{\bo}$; we adopt this convention
  because it makes applications of the Hardy inequality more
  straightforward and allows us to avoid introducing the weighted
  $\bo$-calculus.  Second, be aware that although $\WFb^{1,m}$ and
  $\WFe^{m,l}$ each have seem to have two superscripts, homologous
  indices have different meanings in these two objects.  Indeed, one
  should think of $\WFb^{1,m}$ as having only the index $m$ and
  therefore measuring $\Psib^{m}$-regularity \emph{with respect to
    $H^{1}$}.  On the other hand, $\WFe^{m,l}$ measures
  $\Psie^{m,l}$-regularity with respect to $L^{2}_{\bo}$ and thus has
  two indices corresponding to those of the edge algebra.
\end{remark}

As with other pseudodifferential algebras, it is convenient to know
that we can microlocalize our estimates:
\begin{lemma}
  \label{lemma:elliptic-estimation}
  If $A, G \in \Psib^{s}$ with $\WFb'A \subseteq \liptic G$, then for
  all $u$ with $$\WFb^{\pm 1,s}u \cap \WFb'G = \emptyset,$$
  we may bound
  \begin{equation*}
    \norm{Au}_{H^{\pm 1}}\leq C\left( \norm{Gu}_{H^{\pm 1}} +
      \norm{u}_{H^{\pm 1}}\right).
  \end{equation*}
\end{lemma}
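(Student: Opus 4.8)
The plan is to transplant the standard microlocal elliptic parametrix argument for the $\bo$-calculus to the present setting, where regularity is measured against $H^{\pm1}$ rather than $L^2_\bo$; the sole extra ingredient beyond the classical machinery of \cite{Melrose:APS} is the boundedness of $\Psib^{0}$ on $H^{\pm1}$ furnished by Lemma~\ref{lemma:psib-on-h1}.

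First I would construct a microlocal parametrix. Because $\WFb'(A)$ is a closed subset of the open set $\liptic(G)$ and, crucially, $A$ and $G$ have the \emph{same} order $s$, the usual iterative construction --- quantize a cutoff of the symbol $\sigma_s(A)/\sigma_s(G)$ (homogeneous of degree zero) that is supported in $\liptic(G)$ and equals $\sigma_s(A)/\sigma_s(G)$ near $\WFb'(A)$, then remove the resulting lower-order error term by term and take an asymptotic sum --- produces
\[
B \in \Psib^{0}(M), \qquad \WFb'(B) \subseteq \liptic(G), \qquad BG = A + E, \quad E \in \Psib^{-\infty}(M).
\]
Since the operators are properly supported and $u$ is compactly supported, $Au = BGu - Eu$ holds as distributions, and because $B$ and $E \in \Psib^{-\infty}(M) \subseteq \Psib^{0}(M)$ are both bounded on $H^{\pm1}$ by Lemma~\ref{lemma:psib-on-h1}, I would obtain
\[
\norm{Au}_{H^{\pm1}} \le \norm{BGu}_{H^{\pm1}} + \norm{Eu}_{H^{\pm1}} \le C\bigl(\norm{Gu}_{H^{\pm1}} + \norm{u}_{H^{\pm1}}\bigr),
\]
which is the desired bound.

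Second, I would check that the hypothesis $\WFb^{\pm1,s}(u) \cap \WFb'(G) = \emptyset$ makes the right-hand side finite --- so that the estimate is not vacuous and in fact $Au \in H^{\pm1}$. Unwinding the wavefront hypothesis by a microlocal partition of unity over $\WFb'(G)$ (which, working near the compact support of $u$, we may take to be compact), one produces a single $\Lambda \in \Psib^{s}(M)$ elliptic on a neighbourhood of $\WFb'(G)$ with $\Lambda u \in H^{\pm1}$; taking a parametrix $P \in \Psib^{-s}(M)$ for $\Lambda$ over that neighbourhood gives $P\Lambda = \mathrm{Id} + R$ with $\WFb'(R)$ disjoint from $\WFb'(G)$, hence $G = (GP)\Lambda - GR$ with $GP \in \Psib^{0}(M)$ and $GR \in \Psib^{-\infty}(M)$, and Lemma~\ref{lemma:psib-on-h1} applied once more yields $Gu \in H^{\pm1}$.

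I do not expect a genuine obstacle here: this is a routine microlocal elliptic estimate, and every component of the $\bo$-calculus it uses (symbol calculus, composition, asymptotic summation, the residual ideal $\Psib^{-\infty}$) is classical. The one point that really requires care --- and the reason the $H^{\pm1}$-based formulation is arranged with $A$ and $G$ of equal order --- is the bookkeeping of orders: every operator fed into Lemma~\ref{lemma:psib-on-h1} must have order $\le 0$. The equality of the orders of $A$ and $G$ is exactly what buys this, forcing the microlocal quotient $B \sim AG^{-1}$ to have order $0$; with unequal orders one would be left to bound an operator of positive order on $H^{\pm1}$, which fails, and a weighted calculus would be needed instead.
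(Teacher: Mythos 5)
Your proof is correct and is essentially the paper's argument: a microlocal elliptic parametrix for $G$ over $\WFb'A$ combined with the $H^{\pm1}$-boundedness of order-zero $\bo$-operators from Lemma~\ref{lemma:psib-on-h1}. The only cosmetic difference is that you build the order-zero quotient $B\sim AG^{-1}$ directly, while the paper inserts $EG+R=\Id$ with $E\in\Psib^{-s}$ and then uses $AE\in\Psib^{0}$ and $AR\in\Psib^{-\infty}$; your extra paragraph verifying $Gu\in H^{\pm1}$ from the wavefront hypothesis is a harmless (and reasonable) addition the paper leaves implicit.
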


\begin{proof}
  The proof is a standard microlocal elliptic parametrix argument: let $E \in
  \Psib^{-s}$ with $\WFb' E \subseteq \WFb' G$ so that
  \begin{equation*}
    R = \id - EG \in \Psib^{0}, \quad \WFb' R \cap \WFb' A = \emptyset.
  \end{equation*}
  We may then write
  \begin{equation*}
    Au = A (EG + R)u, 
  \end{equation*}
  so that
  \begin{equation*}
    \norm{Au}_{H^{\pm 1}} \leq \norm{(AE)Gu}_{H^{\pm 1}} +
    \norm{ARu}_{H^{\pm 1}}
    \leq C \left( \norm{Gu}_{H^{\pm 1}} + \norm{u}_{H^{\pm 1}} \right).
  \end{equation*}
\end{proof}

In Section~\ref{sec:diffractive}, we repeatedly use the algebra properties of $\Diff^{*}\!\Psib^{*}$ and the
following lemma to allow easy estimates on error terms by doing
commutations freely.
\begin{lemma}
  \label{lemma:order-shifting}
  Suppose $E \in \Diff^{1}\!\Psib^{s+r-1}+\Psib^{s+r}$.  There are pseudodifferential
  operators $A \in \Psib^{s-1}$ and $B \in \Psib^{r}$ with $\WFb'A
  \cup \WFb' B \subseteq \WFb'E$ so that for all $u \in H^{1}$ and
  $v\in L^{2}$ with $\WFb'E \cap (\WFb^{1,s-1} u \cup \WFb^{1, r-1}v)
  = \emptyset$, 
  \begin{equation*}
    \abs{\ang{E u, v}} \leq C\left( \norm{Au}_{H^{1}}\norm{Bv}_{L^{2}_{g}}
      +  \norm{u}_{H^{1}}\norm{v}_{L^{2}_{g}} \right).
  \end{equation*}

  Similarly, if
  $E \in \Diff^{2}\!\Psib^{s+r-2} +
  \Diff^{1}\!\Psib^{s+r-1}+\Psib^{s+r}$, we may find
  $A \in \Psib^{s-1}$ and $B\in \Psib^{r-1}$ so that
  \begin{equation*}
    \abs{\ang{E u, v}} \leq C\left( \norm{Au}_{H^{1}}\norm{Bv}_{H^{1}}
      + \norm{u}_{H^{1}}\norm{v}_{H^{1}} \right).
  \end{equation*}
\end{lemma}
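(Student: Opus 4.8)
The plan is to prove both inequalities by the same mechanism: using linearity, reduce $E$ to a single summand $r^{-j}D_{r}^{k}Q$ with $Q\in\Psib^{p}$, and then transfer the ``excess'' orders of differentiation carried by the summand onto $u$ and $v$, where the $H^{1}$ norm on $u$ and the $L^{2}_{g}$ (resp.\ $H^{1}$) norm on $v$ absorb them. I would begin by fixing, once and for all, elliptic $A_{0}\in\Psib^{s-1}$ and $B_{0}\in\Psib^{r}$ (resp.\ $\Psib^{r-1}$) with wavefront sets in a small neighborhood of $\WFb'E$, still disjoint from $\WFb^{1,s-1}u$ and $\WFb^{1,r-1}v$; these will be the operators $A,B$ of the conclusion, and by the microlocal elliptic estimate (Lemma~\ref{lemma:elliptic-estimation}) the wavefront hypotheses force $A_{0}u\in H^{1}$ and $B_{0}v\in L^{2}_{g}$ (resp.\ $B_{0}v\in H^{1}$), while every intermediate operator with wavefront in $\WFb'E$ can be controlled by $A_{0}$ or $B_{0}$. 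After inserting $G\in\Psib^{0}$ elliptic near $\WFb'E$ with $\WFb'(I-G)\cap\WFb'E=\emptyset$, the term $\ang{E(I-G)u,v}$ is harmless --- a smoothing operator composed with at most two derivatives --- so I may assume all operators stay microlocalized near $\WFb'E$. The elementary tool is microlocal division: any $P\in\Psib^{m}$ with $\WFb'P$ in a compact set can be written $P=B^{*}A+R$ with $A\in\Psib^{m_{2}}$, $B\in\Psib^{m_{1}}$, $m_{1}+m_{2}=m$, $\WFb'A\cup\WFb'B$ near $\WFb'P$, and $R\in\Psib^{-\infty}$. For the lowest-order summands --- $Q\in\Psib^{s+r-1}$ in the first estimate, $Q\in\Psib^{s+r-2}$ in the second, and more generally any summand whose pseudodifferential order is one (resp.\ two) below the top --- this already suffices: writing $Q=B^{*}A+R$ with $A\in\Psib^{s-1}$ and $B\in\Psib^{r}$ (resp.\ $\Psib^{r-1}$), one has $\ang{Qu,v}=\ang{Au,Bv}+\ang{Ru,v}$, and Cauchy--Schwarz finishes it.

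The heart of the matter is a summand carrying a genuine differential factor --- say $E=D_{r}Q$ with $Q\in\Psib^{s+r-1}$. Here I would commute $D_{r}$ to the far left of a factorization $Q=B^{*}A+R$ with $A\in\Psib^{s-1}$, $B\in\Psib^{r}$: by Lemma~\ref{lemma:differentialcommutator} and the algebra property of $\Diff^{*}\!\Psib^{*}$, $[D_{r},B^{*}A]=S+TD_{r}$ with $S\in\Psib^{s+r-1}$ and $T\in\Psib^{s+r-2}$, so $\ang{D_{r}Qu,v}=\ang{AD_{r}u,Bv}+\ang{Su,v}+\ang{TD_{r}u,v}+\ang{D_{r}Ru,v}$; the last term is negligible, and $\ang{Su,v}$ and $\ang{TD_{r}u,v}$ have strictly lower excess and fall to the previous paragraph (after one more commutation in the latter). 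For the main term I commute $D_{r}$ once more past $A$ to write $AD_{r}u=D_{r}(Au)+(\text{lower order})u$; since the $H^{1}$ norm dominates $\norm{D_{r}(\cdot)}_{L^{2}_{g}}$, this gives $\norm{AD_{r}u}_{L^{2}_{g}}\le C(\norm{A_{0}u}_{H^{1}}+\norm{u}_{H^{1}})$, whence $\ang{AD_{r}u,Bv}\le\norm{AD_{r}u}_{L^{2}_{g}}\norm{Bv}_{L^{2}_{g}}$ is of the required form. A summand $r^{-1}Q$ is handled identically, with $r^{-1}$ in place of $D_{r}$: by Lemma~\ref{lemma:differentialcommutator} each commutator with $r^{-1}$ retains the factor $r^{-1}$ while dropping a pseudodifferential order, and at the end the Hardy inequality \eqref{Hardyconstant} converts $\norm{r^{-1}(Au)}_{L^{2}_{g}}\le 2\norm{\partial_{r}(Au)}_{L^{2}_{g}}\le C\norm{Au}_{H^{1}}$, after first using elliptic regularity to know $Au\in H^{1}$. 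Finally a pure $\Psib^{s+r}$ summand $P$ has ``one order too many'' for direct factorization; here I would take a microlocal partition of $\WFb'P$ subordinate to the three conic regions where $\tau$, $\sigma$, or $\eta$ respectively is bounded away from zero, and on each piece divide off $D_{t}$, $rD_{r}$, or $D_{\theta}$, rewriting $P$ modulo a $\Psib^{s+r-1}$ (lower-order) error as $\widetilde{P}D_{t}$, $\widetilde{P}(rD_{r})$, or $\widetilde{P}D_{\theta}$ with $\widetilde{P}\in\Psib^{s+r-1}$ --- now of the differential-factor type just treated (the stray $r$ in the middle case is harmless on the fixed compact region and may be absorbed into $\widetilde{P}$ or moved onto $v$).

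The second estimate is entirely parallel: a summand now carries up to two excess derivatives, and the target $\norm{Au}_{H^{1}}\norm{Bv}_{H^{1}}$ with $A\in\Psib^{s-1}$ and $B\in\Psib^{r-1}$ has room to place one derivative on $u$ (via $\norm{D_{\bullet}(Au)}_{L^{2}_{g}}\le\norm{Au}_{H^{1}}$) and, after transferring the second factor across the pairing, one on $v$ (via the analogous bound for $v$); pure $\Psib^{s+r}$ summands are again reduced by peeling off differential factors with a microlocal partition. I expect the only genuine work to lie in the bookkeeping: keeping every error term microlocalized inside $\WFb'E$ so that the wavefront hypotheses on $u$ and $v$ genuinely apply, and --- the point one must not botch --- arranging that every factor of $r^{-1}$ produced by commutation is ultimately ``paid for'' by a derivative absorbed into an $H^{1}$ norm via the Hardy inequality, rather than left stranded. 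No individual step is deep; the content is the algebra of $\Diff^{*}\!\Psib^{*}$ together with the matched counting of orders --- a $\Psib^{s-1}$ factor plus the derivative supplied by $H^{1}$ on $u$ totals $s$, a $\Psib^{r}$ factor plus $L^{2}_{g}$ totals $r$, summing to the order $s+r$ of $E$.
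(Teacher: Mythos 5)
Your argument is correct, and the underlying mechanism is the one the paper uses: move the $r$ orders of pseudodifferential excess across the pairing onto $v$, and absorb the remaining one (or two) orders of genuine differentiation into the $H^{1}$ norm(s) via commutation, the Hardy inequality, and the microlocal elliptic estimate. The difference is in execution. The paper's proof is two lines: fix a global elliptic self-adjoint pair $T_{\pm r}\in\Psib^{\pm r}$ with $T_{r}T_{-r}=\Id+R$, $R$ smoothing, write $\ang{Eu,v}=\ang{T_{-r}Eu,T_{r}v}-\ang{REu,v}$, and set $B=T_{r}$; the operator $T_{-r}E$ lies in $\Diff^{1}\!\Psib^{s-1}+\Psib^{s}$, and the assertion that its $L^{2}_{g}$ norm on $u$ is controlled by $\norm{Au}_{H^{1}}$ for a genuine $A\in\Psib^{s-1}$ is left implicit. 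Your proof replaces the global pair by microlocal factorizations $Q=B^{*}A+R$ and then supplies exactly the commutation, Hardy, and partition-of-the-characteristic-directions arguments (peeling off $D_{t}$, $rD_{r}$, or $D_{\theta}$ where $\tau$, $\sigma$, or $\eta$ is elliptic) that make that implicit step rigorous --- these are the same manipulations that appear in the paper's Lemma~\ref{lemma:H1toL2}. So you have written the honest, fully detailed version of the paper's argument at the cost of considerably more bookkeeping; if you adopt the $T_{\pm r}$ trick for the weight shift you can collapse the $v$-side of your argument to one line and confine the real work to the $u$-side absorption, which is where the content of the lemma actually lives.
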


\begin{proof}
  Let $T_{\pm r} \in \Psib^{\pm r}$ be elliptic, self-adjoint $\bo$-operators
  which are inverses of one another modulo a smoothing error, so that
  $T_{r}T_{-r} = \Id + R$ with $R \in \Psib^{-\infty}$.  Setting $A =
  T_{-r}E $ and $B = T_{r} \in \Psib^{r}$ finishes
  the proof.  
\end{proof}

\section{Analytic preliminaries}
\label{sec:analyt-prlim}

We return to the Dirac--Coulomb equation $(i \dirac_{\pot} -m)u = 0$.  In
this section we discuss several preliminary results needed in the main
proofs below.

\subsection{Self-adjoint extension}
\label{sec:self-adjoint}
Recall that  $$M=[\RR^{1+3}; \RR_t \times 0_x]$$ denotes the blowup of our
spacetime at the spatial origin,
and
$$
X=[\RR^{3}; \times 0]
$$
denotes its spatial cross section,
with $\blowdown$ denoting the blowdown map in either case. 

We shall abuse notation later on in
confusing $X$ with all of $\RR^3,$ but will begin by distinguishing
these two spaces for the purposes of describing domains and Sobolev
spaces precisely before proving that the confusion is safe.
  
We now examine the indicial roots of the formally self-adjoint
operator $\ellipticdirac$ (defined in Section~\ref{sec:notation}) where\footnote{More
generally, we remark that we can replace the smooth term by a term that is smooth on
the blowup of the origin with no change in the arguments of this section.} $A_{0}=\charge/r+V$ and
$V, A_{j}\in \CI,$ i.e., the boundary spectrum given by the points
of non-invertibility of $I(r\ellipticdirac, \xi).$  

By \eqref{ellipticdirac2}, if $\sigma$ denotes the dual to $r D_r$ in $\Tbstar X,$
$$
I (r\ellipticdirac,\xi)  =\charge \Id -i\gamma^5\Sigma_r\big( i \sigma  + 
1-\beta K\big).
$$

To study the equation $I(r\ellipticdirac,\xi) \psi=0$ we split
$$
\psi  =
\begin{pmatrix}
  \psi^u\\ \psi^l
\end{pmatrix}
$$
into upper and lower spinors, and, as above, expand each in the basis
of spherical spinors of the form
$$
\begin{pmatrix}
 \Omega_{\kappa\mu}\\
0
\end{pmatrix},\ \begin{pmatrix}
0\\
 \Omega_{-\kappa\mu'}
\end{pmatrix}.
$$

Thus, once again using \eqref{Kaction}, \eqref{sigmar},
 we obtain
$$
I (r\ellipticdirac,\xi)
\begin{pmatrix}
   a \Omega_{\kappa\mu}\\  b \Omega_{\kappa'\mu'}
\end{pmatrix} = 
\begin{pmatrix}
a\charge \Omega_{\kappa\mu} -(\sigma - i + i \kappa )b\Omega_{\kappa' \mu'}\\  
 b \charge \Omega_{\kappa'\mu'} -(\sigma - i - i \kappa ) a \Omega_{-\kappa \mu}
\end{pmatrix}.
$$
Hence there is only nullspace when 
$$
\charge^2= \kappa^{2} + ( \sigma - i )^{2},
$$
i.e.\ when 
$$
\sigma=i \pm i\sqrt{\kappa^{2}- \charge^{2}}.
$$

Because $\kappa$ takes values in $\ZZ \setminus \{ 0 \}$, we can
explicitly calculate these indicial roots for small values of $\charge$.
Indeed, if $\smallabs{\charge} < \sqrt{3}/2$, we are assured that
\begin{equation}\label{noindicialroots}
\Im \specb(r\ellipticdirac) \cap [1/2,3/2]=\emptyset.
  \end{equation}

Now since\footnote{Recall that $\bo$-Sobolev spaces are by default defined with respect to
the b-density rather than the metric density.}
$$
\ellipticdirac: r^{-1/2}\bH^1(X) \to r^{-3/2}L^2_b(X)=L^2_g(X)
$$
is continuous, we certainly find that $r^{-1/2}\bH^1(X)$ is contained
in the minimal domain of $\ellipticdirac.$
 On the other hand,
\eqref{noindicialroots} implies by work of Lesch
\cite[Corollary 1.3.17]{Le:97} (see also Melrose \cite[Chapter 5]{Melrose:APS}
for a parametrix construction, as well as Gil--Mendoza \cite{GiMe:03} for a general discussion of
self-adjoint extensions of operators of this type) that the maximal and minimal
domains must in fact coincide, hence
$\ellipticdirac$ is essentially self-adjoint, with domain
given by
\begin{equation}\label{domain}
\dom = r^{-1/2}\bH^1.
\end{equation}
(Cf.\ \cite[Theorem V.5.10, Remark
V.5.12]{Ka:66} for the essential self-adjointness of Dirac operators.)

Having established the self-adjointness of
  $\ellipticdirac$ with domain $\dom,$ we now define
  $$
\dom^s =\operatorname{Dom}  (\Id +\ellipticdirac^2)^{s/2},
$$
with the powers of the operator being defined by the spectral
theorem.  Note that away from the origin, these simply agree with
Sobolev spaces:
\begin{lemma}\label{lemma:domsob}
For all $s \in \RR$
  $$
\dom^s \cap \mathcal{E}' (\RR^3 \backslash \{0\})=H^s \cap \mathcal{E}' (\RR^3 \backslash \{0\}).
$$
\end{lemma}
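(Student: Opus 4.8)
The plan is to prove this by a purely local argument away from the origin, where $\ellipticdirac$ is an elliptic first-order differential operator (indeed, a classical Dirac-type operator with smooth coefficients on $\RR^3 \setminus \{0\}$), and then to leverage elliptic regularity together with the spectral-theoretic definition of $\dom^s$.

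First I would reduce to showing that, for a compactly supported distribution $u$ with $\operatorname{supp} u \subset \RR^3 \setminus \{0\}$, membership in $\dom^s$ is equivalent to membership in $H^s$. One direction is easy: if $u \in H^s \cap \mathcal{E}'(\RR^3 \setminus \{0\})$, then since $\ellipticdirac$ is a first-order differential operator with smooth coefficients near $\operatorname{supp} u$, we have $\ellipticdirac^k u \in H^{s-k}$ for every $k$, and more generally $(\Id + \ellipticdirac^2)^{s/2} u$ makes sense and lies in $L^2$; the cleanest way to see this is to note that for $s$ a nonnegative even integer it is immediate, for general $s \geq 0$ one interpolates (both scales are defined by the spectral theorem / standard interpolation), and for $s < 0$ one dualizes. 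The other direction is the substantive one: suppose $u \in \dom^s$ with $\operatorname{supp} u$ compact in $\RR^3 \setminus \{0\}$. Then $(\Id + \ellipticdirac^2)^{s/2} u \in L^2$, hence $u \in \dom^s \subset L^2$ and $\ellipticdirac^j u \in L^2_{\mathrm{loc}}$ away from the origin for all $j \leq s$ in the appropriate sense; one then bootstraps using that $\ellipticdirac$ is elliptic of order $1$ with smooth coefficients on a neighborhood of $\operatorname{supp} u$. Concretely, pick a cutoff $\chi \in \CI_c(\RR^3 \setminus \{0\})$ equal to $1$ near $\operatorname{supp} u$; then $\chi u = u$, and $\ellipticdirac(\chi u)$, $\ellipticdirac^2(\chi u), \dots$ are all controlled, so standard elliptic regularity for the operator $\Id + \ellipticdirac^2$ (a second-order elliptic operator with smooth coefficients near $\operatorname{supp} u$) upgrades $u$ from $L^2$ to $H^s$ for integer $s$, and then interpolation/duality handles the rest.

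The one point requiring care — and what I expect to be the main obstacle — is that the membership $u \in \dom^s$ is a \emph{global}, spectral-theoretic statement about the self-adjoint operator $\ellipticdirac$ on all of $\RR^3$, and one must convert it into \emph{local} elliptic information near $\operatorname{supp} u$ without the cutoff destroying the domain membership. The safe route is to work with nonnegative integer powers first: if $2k \geq |s|$ and one writes $s = s_+ - s_-$ appropriately, observe that $\dom^{2k} = \operatorname{Dom}(\ellipticdirac^{2k})$ and that for $v \in \operatorname{Dom}(\ellipticdirac^{2k})$ supported away from the origin, $\ellipticdirac^{2k} v$ is computed by the differential operator, which is elliptic of order $2k$ there; hence $v \in H^{2k}_{\mathrm{loc}}$ near its support, giving $\dom^{2k} \cap \mathcal{E}'(\RR^3\setminus\{0\}) \subseteq H^{2k}$, and the reverse inclusion is the easy direction above. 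For general real $s$ one then uses that both families $\{\dom^s\}$ and $\{H^s\}$ form interpolation scales (the former by the spectral theorem, the latter classically) that agree at the even integers and at $s=0$, and that the localization to a fixed compact set in $\RR^3 \setminus \{0\}$ is compatible with complex interpolation; negative $s$ is then obtained by the duality pairings $\dom^s = (\dom^{-s})^*$ and $H^s = (H^{-s})^*$, which are compatible on distributions supported in the fixed compact set. This yields the claimed equality for all $s \in \RR$.

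A subtlety worth flagging in the write-up is the role of the minimal-coupling cross terms and the singular potential $\charge/r$: these are irrelevant here precisely because the statement is localized away from $r=0$, where $A_0 = \charge/r + V$ and all the $A_j$ are smooth and $\ellipticdirac$ is a genuine elliptic differential operator of order one with smooth coefficients. So the proof is really just "elliptic regularity plus interpolation/duality," and the only thing to be slightly careful about is phrasing the bootstrap so that one never needs to know anything about the operator near the origin. I would present the even-integer case in full, then state that the general case follows by interpolation and duality, citing standard references for the interpolation of the scales $\dom^s$ (spectral theorem) and $H^s$.
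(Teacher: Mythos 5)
Your proposal is correct and follows essentially the same route as the paper: establish the even-integer case using ellipticity of $\ellipticdirac$ away from the origin (the paper does this by induction from the characterization $\dom^1=r^{-1/2}\bH^1$, which agrees with $H^1$ there), then extend to all real $s$ by interpolation and duality. The only presentational difference is that the paper justifies the interpolation step by noting that the purely imaginary powers of $\Id+\ellipticdirac^2$ and of the Laplacian are unitary, which is the same point you flag as "compatibility of the scales with complex interpolation."
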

\begin{proof}
For $s$ an even integer, the result follows inductively from the characterization of
$\dom=\dom^1,$ which does agree with $H^1$ away from the origin.
Thus for any $\varphi \in \mathcal{C}_c^\infty(\RR^3 \backslash
\{0\}),$ whenever $\Re s\in 2 \NN,$
\begin{equation}\label{samedomain}
  \varphi u \in \dom^s \Longleftrightarrow (\Id +\Lap)^s \varphi u \in L^2,
\end{equation}
since the pure imaginary powers of $(\Id+\ellipticdirac^2)$ and of
$(\Id+\Lap^2)$ are both unitary.  Then by interpolation and duality
\eqref{samedomain} holds for all $s.$
  \end{proof}

\subsection{Admissible solutions and energy estimates}\label{section:energy}
\begin{definition}
  A solution to $(i \dirac_v-m)u=0$ is \emph{admissible} if it lies in
  $$
\mathcal{C}(\RR; \dom^s)
$$
for some $s \in \RR.$
  \end{definition}
  \emph{In the propagation theorems in this paper, we deal only with
    admissible solutions.}  Note that there is a unique admissible fundamental solution,
 since the initial data $\delta(x-x_0)$ lies in
  $\dom^{-n/2-0}$ by Lemma~\ref{lemma:domsob}.

  Given Cauchy data $u_0 \in \dom^s,$ there exists a unique admissible
  solution
  $$
e^{-it \ellipticdirac} u_0
  $$
by Stone's theorem; the propagator is of course unitary on $\dom^s$
for all $s \in \RR.$
More generally, we will have use for the following energy estimate:

\begin{lemma}\label{lemma:diracenergy}
  Let $u$ solve $(i \dirac_V-m)u=0$ on $[t_0, t_1] \times X$ and lie in $\CI(\RR; \dom^{\infty}).$    For any operator $Q: \CI(\RR; \dom^\infty) \to \CI(\RR; \dom^\infty),$
    $$
\frac 12 \frac{d}{dt} \norm{Q u}_{\dom^s}^2 =\Re \ang{i [\ellipticdirac,Q] u, Qu}_{\dom^s}
    $$
  \end{lemma}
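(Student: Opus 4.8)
The plan is to turn this into the standard Hilbert-space energy identity for a self-adjoint generator. First I would put the Dirac equation into first-order form: multiplying $(i\dirac_V-m)u=0$ by $\beta=\gamma^0$ and using \eqref{eq:defin-of-first-order-op}, the hypothesis on $u$ becomes
\[
  \dop u = i\pa_t u-\ellipticdirac u = 0,\qquad\text{i.e.}\qquad \pa_t u = -i\ellipticdirac u,
\]
an identity in $\dom^\infty$ for each $t$. Since $u\in\CI(\RR;\dom^\infty)$ and $Q$ preserves $\CI(\RR;\dom^\infty)$, the curve $t\mapsto Qu(t)$ is smooth with values in $\dom^s$, so $\tfrac12\tfrac{d}{dt}\norm{Qu}_{\dom^s}^2=\Re\ang{\pa_t(Qu),Qu}_{\dom^s}$. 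Commuting $Q$ past $\pa_t$ (the operator $Q$ carries no $t$-dependence, as in the applications) and substituting the equation gives $\pa_t(Qu)=-iQ\ellipticdirac u$.

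Next I would split off the commutator by writing $Q\ellipticdirac=\ellipticdirac Q-[\ellipticdirac,Q]$, so that
\[
  \Re\ang{\pa_t(Qu),Qu}_{\dom^s}=\Re\ang{i[\ellipticdirac,Q]u,Qu}_{\dom^s}-\Re\big(i\,\ang{\ellipticdirac Qu,Qu}_{\dom^s}\big).
\]
The first term on the right is precisely the asserted expression, so the whole statement reduces to the vanishing of the second term. That in turn follows once one knows that $\ellipticdirac$ is symmetric for the inner product $\ang{\cdot,\cdot}_{\dom^s}$ on $\dom^\infty$: then $\ang{\ellipticdirac Qu,Qu}_{\dom^s}$ is real---here one uses $Qu(t)\in\dom^\infty$, guaranteed by the hypothesis on $Q$---and multiplying a real number by $i$ kills the real part. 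All pairings in sight are finite because $\dom^\infty$ is preserved by $Q$ and by $\ellipticdirac$, and the differentiation under the $\dom^s$ inner product is legitimate by the smoothness just noted.

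The only point that takes a moment---and the nearest thing to an obstacle, though it is quite mild---is this symmetry of $\ellipticdirac$ with respect to $\ang{\cdot,\cdot}_{\dom^s}$. Here I would invoke the essential self-adjointness of $\ellipticdirac$ on $L^2_g$ with domain $\dom=r^{-1/2}\bH^1$ established in Section~\ref{sec:self-adjoint}. Writing $\Lambda=(\Id+\ellipticdirac^2)^{1/2}$, one has $\ang{f,g}_{\dom^s}=\ang{\Lambda^s f,\Lambda^s g}_{L^2_g}$, and by the spectral theorem $\ellipticdirac$ commutes with $\Lambda^s$ on $\dom^\infty$; hence for $f,g\in\dom^\infty$,
\[
  \ang{\ellipticdirac f,g}_{\dom^s}=\ang{\ellipticdirac\Lambda^s f,\Lambda^s g}_{L^2_g}=\ang{\Lambda^s f,\ellipticdirac\Lambda^s g}_{L^2_g}=\ang{f,\ellipticdirac g}_{\dom^s},
\]
the middle step being self-adjointness of $\ellipticdirac$ applied to the vectors $\Lambda^s f,\Lambda^s g\in\dom$. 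Feeding this back into the previous display closes the argument.
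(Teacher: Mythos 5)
Your argument is correct and is exactly the paper's proof written out in detail: the paper disposes of this lemma in one line (``this follows by self-adjointness of $\ellipticdirac$ and the definition of the $\dom^s$ norm in terms of its powers''), and your computation---differentiating under the inner product, splitting $Q\ellipticdirac=\ellipticdirac Q-[\ellipticdirac,Q]$, and killing the symmetric term via $\ang{f,g}_{\dom^s}=\ang{\Lambda^s f,\Lambda^s g}_{L^2_g}$ together with the spectral-theorem commutation of $\ellipticdirac$ with $\Lambda^s$---is precisely what that one line is compressing. Your aside that $Q$ must commute with $\pa_t$ correctly identifies the (implicit) reading of the lemma, consistent with its use for $Q=K^j$ later in the paper.
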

  \begin{proof}
This follows by self-adjointness of $\ellipticdirac$ and the
definition of the $\dom^s$ norm in terms of its powers.
    \end{proof}

    For purposes of shifting regularity of solutions up and down
    conveniently, we now define, for $s \in \RR,$
    $\Theta_s\in \Psi^s(\RR)$ to be a parametrix for $\ang{D_t}^s$
    whose Schwartz kernel is properly supported; thus
    $\Theta_s \Theta_{-s}-\Id$ is a smoothing operator with properly
    supported Schwartz kernel.  We then note by $t$-translation
    invariance of the Dirac equation that if
    $u \in \mathcal{C}(\RR; \dom^k)$ is a solution to the Dirac
    equation, then (by ellipticity of the spatial part of the Dirac
    operator)
$$
\Theta_s u \in \mathcal{C}(\RR; \dom^{k-s})\cap H^{k-s}_{\loc}
$$
is another solution (up to a smooth remainder), and
$$
\Theta_{-s} \Theta_s u -u \in 
\CI(\RR; \dom^\infty).
$$

It is helpful in what follows to be able to pass freely among
different notions of solution: viewing a solution as lying in
locally $H^s(\RR\times \RR^3)$ is most natural in dealing with
microlocal analysis away from $r=0,$ while the energy spaces
$L^2(\RR;\dom^s)$ or $\mathcal{C}(\RR;\dom^s)$ are natural from the
point of view of global energy estimates.
\begin{lemma}
An admissible solution of the Dirac equation in $\mathcal{C}(\RR;
\dom^s)$ lies in $H^s_{\loc}(M^\circ).$
\end{lemma}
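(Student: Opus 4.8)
The plan is to use the evolution equation to trade time derivatives for elements of the domain scale $\dom^{\bullet}$, and then, working away from $r=0$ where Lemma~\ref{lemma:domsob} identifies $\dom^{m}$ with ordinary Sobolev spaces, to assemble the spacetime $H^{s}$ regularity by a routine anisotropic Sobolev argument.

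Concretely, write the equation as $i\partial_{t}u=\ellipticdirac u$. Since $\ellipticdirac$ commutes with every Borel function of itself and $\ellipticdirac^{2}\leq \Id+\ellipticdirac^{2}$, one has $\norm{\ellipticdirac v}_{\dom^{m-1}}\leq\norm{v}_{\dom^{m}}$ for every real $m$; feeding this into the equation repeatedly shows that $u$ is smooth in $t$ with values in the scale, namely
\[
\partial_{t}^{j}u\in\mathcal{C}(\RR;\dom^{s-j})\qquad(j=0,1,2,\dots).
\]
It then suffices to show $\phi u\in H^{s}(\RR_{t}\times\RR^{3}_{x})$ for every $\phi\in\mathcal{C}_{c}^{\infty}(M^{\circ})$. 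Fixing such a $\phi$, I would choose $\psi\in\mathcal{C}_{c}^{\infty}(\RR_{t})$ and $\chi\in\mathcal{C}_{c}^{\infty}(\RR^{3}\setminus\{0\})$ with $\psi(t)\chi(x)\equiv 1$ on $\operatorname{supp}\phi$; since multiplication by a fixed element of $\mathcal{C}_{c}^{\infty}(\RR^{4})$ preserves $H^{s}(\RR^{4})$, it is enough to treat $w\equiv\psi(t)\chi(x)u$.

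For this I would invoke Lemma~\ref{lemma:domsob}, or rather the localized elliptic regularity underlying its proof, in the form that multiplication by $\chi$ is bounded $\dom^{m}\to H^{m}(\RR^{3})$ for every real $m$ (the extension from even integers to general $m$ being the interpolation/duality step already used there). Combined with the display above and the Leibniz rule, each $\partial_{t}^{j}w$ is a finite sum of terms $\psi^{(i)}(t)\,\chi(x)\,\partial_{t}^{j-i}u\in\mathcal{C}(\RR;H^{s-j+i}(\RR^{3}))\subseteq\mathcal{C}(\RR;H^{s-j}(\RR^{3}))$ with compact support in $t$, hence $\partial_{t}^{j}w\in L^{2}(\RR_{t};H^{s-j}(\RR^{3}_{x}))$ for all $j\geq 0$. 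Choosing an integer $N\geq\max(0,s)$ and using the elementary estimate $\langle(\tau,\xi)\rangle^{2s}\leq C\big(\langle\xi\rangle^{2s}+\langle\tau\rangle^{2N}\langle\xi\rangle^{2(s-N)}\big)$ (immediate when $\abs{\tau}\leq\abs{\xi}$, and when $\abs{\tau}\geq\abs{\xi}$ since $2(s-N)\leq 0$ forces $\langle\xi\rangle^{2(s-N)}\geq\langle\tau\rangle^{2(s-N)}$), one integrates $\abs{\widehat{w}}^{2}$ against this inequality: the two terms contribute $\norm{w}_{L^{2}_{t}H^{s}_{x}}^{2}$ and $\norm{\partial_{t}^{N}w}_{L^{2}_{t}H^{s-N}_{x}}^{2}$, both finite, so $w\in H^{s}(\RR^{4})$, which gives the claim.

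The only genuinely non-formal point — and the step I expect to be the main obstacle — is the boundedness of multiplication by $\chi$ from $\dom^{m}$ to $H^{m}(\RR^{3})$ for all real $m$; this is precisely where the localization away from the potential singularity is essential, since the identification $\dom=r^{-1/2}\bH^{1}$ differs from $H^{1}$ at $r=0$, and it is the only place the singular nature of $\ellipticdirac$ at the origin enters. Everything else is a standard combination of the evolution equation $i\partial_{t}u=\ellipticdirac u$ with the anisotropic Sobolev inequality above.
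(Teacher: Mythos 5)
Your proof is correct, and it rests on the same two pillars as the paper's: the evolution equation converts time regularity into membership in the scale $\dom^{\bullet}$, and Lemma~\ref{lemma:domsob} identifies $\dom^{m}$ with $H^{m}$ after cutting off away from $r=0$. The implementation differs in one respect worth noting. The paper handles the fractional order $s$ in a single stroke by applying the regularity-shifting operators $\Theta_{\pm s}$ (parametrices for $\ang{D_t}^{\pm s}$) and using $t$-translation invariance to see that $\Theta_s u$ is again an admissible solution modulo smooth errors; this puts $u$ in $H^s(\RR_t;L^2)\cap L^2(\RR_t;H^s)$ locally, and the conclusion follows from $\smallabs{\tau}^s+\smallabs{\zeta}^s\sim\smallabs{(\tau,\zeta)}^s$. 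You instead differentiate the equation an integer number of times, obtaining $\pa_t^j u\in\mathcal{C}(\RR;\dom^{s-j})$, and recover the fractional spacetime order by the explicit anisotropic inequality $\ang{(\tau,\xi)}^{2s}\leq C(\ang{\xi}^{2s}+\ang{\tau}^{2N}\ang{\xi}^{2(s-N)})$. This is slightly more elementary (no FIO-free but still pseudodifferential $\Theta_s$ machinery, no need to check that $\Theta_s u$ solves the equation modulo smooth terms), at the cost of a little bookkeeping with the Leibniz rule; your inequality and the case analysis behind it are correct for all real $s$ once $N\geq\max(0,s)$. The one point you flag as non-formal --- that multiplication by $\chi\in\mathcal{C}^\infty_c(\RR^3\setminus\{0\})$ is bounded $\dom^m\to H^m(\RR^3)$ for all real $m$ --- is indeed exactly the content of (the proof of) Lemma~\ref{lemma:domsob}, obtained there for even integers by elliptic regularity and induction and extended by interpolation and duality, so deferring to it is legitimate.
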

  \begin{proof}
    For such a solution (with all norms below local ones, for $t$ in a
    finite interval, and over a compact set in the interior of $X$)
    $$
\Theta_s u \in \mathcal{C}(\RR; L^2) \subset L^2,
$$
hence
$$
u \in H^s(\RR; L^2) \cap L^2(\RR; H^s) \subset H^s,
$$
by the local Fourier characterization of Sobolev regularity (and since
$\smallabs{\tau}^s +\smallabs{\zeta}^s \sim \smallabs{(\tau,\zeta)}^s$
outside the unit ball).
  \end{proof}

\subsection{Reduction to Klein--Gordon}\label{section:KG}
Some of the arguments below are considerably simplified by considering
a related principally scalar second-order operator obtained essentially by squaring the
Dirac operator.  

Consider a four-spinor solution $u$ to 
$$
(i \dirac_\pot-m)u=0,
$$
where $\pot=(A_{0}, A_{1}, A_{2}, A_{3})$, i.e.,
$$
\big( i(\gamma^0 (\pa_0+iA_{0}) +\gamma^j (\pa_j+i\pot_{j}))-m \big) u=0.
$$

Applying $(i \dirac_\pot +m)$ we obtain immediately
$$
\begin{aligned}
  0 &= (-\dirac_\pot^2-m^2) u\\
  &= - \big( \gamma^0 (\pa_0+iA_{0}) + \gamma^j(\pa_j+iA_{j})\big) \big( \gamma^0
  (\pa_0+iA_{0}) + \gamma^k(\pa_k+iA_{k})\big) u-m^2u\\
  &= -\big( (\pa_0+iA_{0})^2u -(\pa_j+iA_{j})^2 u + \gamma^j \gamma^0 (i
  \pa_j (A_{0}))u + \gamma^{j}\gamma^{k}(i\pa_{j}A_{k}) u \big) -m^2 u\\
  &= - (\pa_0+iA_{0})^2u +(\pa_j+iA_{j})^2 u-m^2u  -i \gamma^j \gamma_0 
  \pa_j (A_{0}) u  -i\gamma^{j}\gamma^{k}\pa_{k}(A_{k})u - m^2 u .
  \end{aligned}
$$
For $A_{0}$ radial,
$$
\begin{aligned}
  -i \gamma^j \gamma^0 \pa_j(A_{0})  &= i\gamma^0 \gamma^j \pa_j(A_{0})\\
  &= i\gamma^0 \gamma_r \pa_r(A_{0})\\
  &= i \begin{pmatrix} I & 0 \\ 0 & -I \end{pmatrix} \begin{pmatrix} 0 &
    \sigma_r\\ -\sigma_r & 0 \end{pmatrix} \pa_r (A_{0})\\
  &=i \begin{pmatrix} 0 &
    \sigma_r\\ \sigma_r & 0 \end{pmatrix} \pa_r (A_{0}),
\end{aligned}
$$
hence, for $A_{0}$ radial and $A_{j} = 0$,
$$
- (\pa_0+iA_{0})^2u +\pa_j^2 u-m^2u+ i \begin{pmatrix} 0 &
    \sigma_r\\ \sigma_r & 0 \end{pmatrix} \pa_r (A_{0})u=0.
  $$

  More generally, assume $A_{j}\in \CI$ and
  \begin{equation*}
    A_{0} = \frac{\charge}{r} + V,
  \end{equation*}
  where $V\in \CI$.  We now lump the extra terms together as
  perturbations, and multiply through by $\gamma^0$ rewrite the first
  order equation in a more convenient form as 
\begin{equation}\label{eth}
  \dop  \equiv i (\pd[t] + i\frac{\charge}{r} + iV) + i
  \alpha_{r}\left( \pd[r] + \frac{1}{r} - \frac{1}{r}\beta K\right) -
  \sum_{j=1}^{3}\alpha_{j}A_{j} - m \beta,
\end{equation}
where we recall that $\beta = \gamma^{0}$ and $\alpha_{j} =
\begin{pmatrix}
  0 & \sigma_{j} \\ \sigma_{j} & 0 
\end{pmatrix}
$.  The corresponding operator of Klein--Gordon type
\begin{equation}\label{Pdef}
  P\equiv (i \dirac_\pot+m)(i \dirac_\pot-m)
\end{equation}
 then satisfies the
following hypotheses:
\begin{assumption*}
 $P$ is a second-order operator of
  the following form:
\begin{equation}\label{Pform}
  P= -(\pa_0+i\frac{\charge}{r})^2 + \sum \pa_j^2 -m^2
  -i\frac{\charge}{r^2} \begin{pmatrix} 0 &
    \sigma_r\\ \sigma_r & 0 \end{pmatrix}
+\bR
\end{equation}
with
\begin{equation}\label{Rform}
\bR=\charge \frac{\mathbf{W}_0}{r}+ \mathbf{W}_1^\alpha \pa_\alpha+\mathbf{W}_2
  \end{equation}
where $\mathbf{W}_\bullet\in \CI(\RR^3)$ but are not
necessarily scalar. 
\end{assumption*}
These assumptions on the operator will suffice for most of our
propagation results below.

Note that the term $$  -i\frac{\charge}{r^2} \begin{pmatrix} 0 &
    \sigma_r\\ \sigma_r & 0 \end{pmatrix}$$ is, in contrast to the
  other main terms in the equation, formally anti-self-adjoint rather
  than self-adjoint.  This creates significant technical difficulties in
  the b-propagation arguments, since, while lower order in terms of
  differentiation, this anti-self-adjoint term is large.  If we
  estimate it in pairings by the Hardy inequality, is larger
    than the second-order terms in the equation.  This obstacle is why
    we use the first order equation directly in the hyperbolic part of
    the b-propagation argument below.

The presence of the charge parameter multiplying
$\mathbf{W}_0$ is in fact inessential here, as its size will play no
role in the analysis of that term.

\subsection{Interior propagation}
\label{sec:interior-coiso}

In this section, we discuss the propagation \emph{away from $r=0$} of
singularities (or, dually, of regularity) and also of 
iterated regularity under the angular test operators $D_\theta$ as
well as the spacetime scaling vector field
\begin{equation}\label{Rdef}
  R=rD_r+t D_t.\end{equation}

First, we remark that away from the potential singularity at the
origin, the standard theory of propagation of singularities applies:
\begin{proposition}\label{prop:propsing}
Let $u$ satisfy $(i \dirac_\pot-m)u=0.$  Then
$\WF u \subset \Sigma\equiv \{ \eta^{\alpha \beta} \xi_\alpha \xi_\beta=0\}$ and is a union of
maximally extended integral curves of the Hamilton flow generated by
$\eta^{\alpha \beta} \xi^\alpha \xi^\beta,$ i.e., lifts of straight lines.
\end{proposition}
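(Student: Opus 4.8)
The plan is to deduce everything from the classical Hörmander theory of propagation of singularities, applied to the squared operator $P$ and working in the interior $M^\circ=\{r>0\}$, where all coefficients are smooth. First, recall from the lemma above that an admissible solution $u$ of $(i\dirac_\pot-m)u=0$ lies in $H^s_{\loc}(M^\circ)$ for some $s$, so $\WF u$ is a well-defined closed conic subset of $\Tstar M^\circ\setminus o$ and the classical microlocal machinery is available there. Moreover $u$ also solves $Pu=0$ with $P=(i\dirac_\pot+m)(i\dirac_\pot-m)$, and by \eqref{Pform}–\eqref{Rform}, restricted to $M^\circ$ the operator $P$ is a classical second-order differential operator ($4\times 4$ system) whose principal symbol is the \emph{scalar} $p(x,\xi)=\eta^{\alpha\beta}\xi_\alpha\xi_\beta$ (up to a nonzero constant) times $\Id_4$; the zeroth- and first-order terms are matrix-valued, but this is irrelevant for the wavefront set. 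The singular $\charge/r$ and $\charge/r^2$ terms are confined to $r=0$ and so play no role in $M^\circ$.

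Next, microlocal elliptic regularity for $P$ — valid for any system whose scalar principal symbol is invertible off its zero set, since $p\cdot\Id_4$ can be inverted microlocally — gives
\begin{equation*}
  \WF u\subseteq \operatorname{Char}(P)\cap \Tstar M^\circ = \Sigma\cap \Tstar M^\circ.
\end{equation*}
(Alternatively, and slightly more directly, this already follows from the first-order equation: the principal symbol $-\gamma^\alpha\xi_\alpha$ of $i\dirac_\pot-m$ satisfies $(-\gamma^\alpha\xi_\alpha)^2=\eta^{\alpha\beta}\xi_\alpha\xi_\beta\,\Id_4$, hence is invertible precisely off $\Sigma$.)

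Finally, for the flow statement I would invoke Hörmander's propagation of singularities for operators of real principal type: the standard positive-commutator argument, using commutants microlocalized near $\Sigma$, goes through verbatim for the $4\times 4$ system because the principal part of $P$ is scalar (the matrix-valued lower-order terms generate only errors one order lower, which are absorbed). This shows $\WF u$ is invariant under the Hamilton flow of $p$ inside $\Tstar M^\circ$. Since the flat metric has constant coefficients, the bicharacteristics are the lines $s\mapsto\big(x_0^\alpha+2s\,\eta^{\alpha\beta}\xi_{0\beta},\ \xi_0\big)$ with $\eta^{\alpha\beta}\xi_{0\alpha}\xi_{0\beta}=0$; intersecting with $M^\circ$ and extending each as far as possible (until the base point reaches $r=0$ or escapes to infinity) exhibits $\WF u$ as a union of maximally extended integral curves, i.e.\ lifts of straight lines. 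The only point needing care is this passage to systems — one uses the scalar-principal-symbol version of the theorem rather than merely symmetric hyperbolicity — but no genuinely new difficulty arises, which is why the section only records the statement. (One could equally argue directly with the first-order operator via Dencker's propagation of polarization sets and project to the wavefront set, but the reduction to $P$ is cleanest.)
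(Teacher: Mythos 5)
Your proposal is correct and follows essentially the same route as the paper: apply $(i\dirac_\pot+m)$ to reduce to $Pu=0$, observe that $P$ is of real principal type (with scalar principal symbol) away from the potential singularity, and invoke H\"ormander's propagation theorem. The paper's proof is just a two-line version of exactly this argument, citing Duistermaat--H\"ormander; your additional remarks on elliptic regularity for the system and on maximal extension in $M^\circ$ are accurate elaborations rather than a different method.
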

Here (and here alone) we have used $\xi_\alpha$ to denote the dual cotangent
variable to the Minkowski coordinate $x^\alpha.$
\begin{proof}
Applying $(i \dirac_\pot+m)$ yields $Pu=0.$  Since $P$ is an operator
of real principal type (away from the potential singularity), the
result follows from the theorem of H\"ormander \cite{Duistermaat-Hormander1}.
  \end{proof}

Now we turn to propagation of iterated regularity under $R, D_\theta.$
Note that this is a simple case of propagation of
\emph{test module regularity}, with $D_\theta$ together with $P$ being generators of a
module of operators testing for \emph{coisotropic} regularity relative
to the manifold $$\coiso \equiv \{\tau^2=\xi^2,\ \eta=0 \}\subset
T^*M^\circ$$ (using coordinates $\tau, \xi,\eta$ dual to $t,r,\theta$
respectively)  and with $D_\theta, R, P$
together testing for regularity relative to the \emph{Lagrangian}
submanifold(s)
$$
\lag\equiv N^* \{t=\pm r\}\subset \coiso
$$

\begin{proposition}\label{proposition:interiorcoiso}
  Let $u$ be an admissible solution to the Dirac equation.
  
  Let $p_0 \in \{\sigma(P)=0\} \subset T^* (M^\circ)$  and let
$p_1$ lie on the maximally extended null bicharacteristic through
$p_0$ in $T^*M^\circ.$ 

If $p_0 \notin \WF^s
D_\theta^\alpha u$ for all $\smallabs{\alpha}\leq N$ 
then $p_1 \notin \WF^s
D_\theta^\alpha u$ for all $\smallabs{\alpha}\leq N.$ 

Likewise, if $p_0 \notin \WF^s
R^j D_\theta^\alpha u$ for all $j+\smallabs{\alpha} \leq N$ then
$p_1 \notin \WF^s
R^j D_\theta^\alpha u$ for all $j+\smallabs{\alpha} \leq N.$
  \end{proposition}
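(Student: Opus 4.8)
The plan is to prove this by a standard positive-commutator (Hörmander-style) propagation-of-singularities argument, but carried out in the module setting: one tests not just for $H^s$ regularity of $u$ but for $H^s$ regularity of $W u$ for all $W$ in the (left) module $\module_N$ generated over $\Psi^0(M^\circ)$ by products of at most $N$ elements of the test module $\{D_\theta, R, P\}$ (respectively $\{D_\theta, P\}$ for the coisotropic statement). Since $P$ annihilates $u$ up to smoothing, only the words in $D_\theta$ and $R$ matter. First I would verify the key algebraic input: the module $\module$ spanned by $P$ and $D_\theta$ (resp.\ $P$, $D_\theta$, $R$) is closed under commutators with $P$ modulo lower-order terms in the module — i.e.\ $[P, D_\theta]$ and $[P, R]$ lie in $\Psi^1 \cdot \module + \Diff^1 M^\circ \cdot (P)$. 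For $D_\theta$ this is because $P$ has a rotationally-near-invariant principal part and $[P,D_\theta]$ is first order with its symbol vanishing on $\coiso = \{\tau^2 = \xi^2,\ \eta = 0\}$; for $R = rD_r + tD_t$ one uses that $R$ is (up to lower order) the generator of scaling, under which $\sigma(P)$ is homogeneous of degree $2$, so $[P,R] = 2iP$ plus an operator of order $\le 1$ — and here is where the mass term $m^2$ and the smooth perturbations $\bR$ enter, contributing first-order errors that must be absorbed into $\Psi^1\cdot\module_{N-1}$. This is exactly the test-module structure of \cite{MVW1}, and the verification is a routine symbol computation.

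Given the module structure, the argument proceeds by induction on $N$. The base case $N=0$ is Proposition~\ref{prop:propsing} (Hörmander's theorem for the real-principal-type operator $P$ away from $r=0$). For the inductive step, assume the conclusion holds for all words of length $\le N-1$; to propagate regularity of $Wu$ for a word $W$ of length $N$, apply the positive-commutator estimate to $P$ with a commutant $\check{A} = A^* A$ whose symbol is supported in a small neighborhood of a bicharacteristic segment from $p_0$ to $p_1$ and is chosen (as in Hörmander) so that $\hamvf_{\sigma(P)} |a|^2$ has a definite sign along the flow. Pairing $i[P, W^* \check A W] u$ with $u$, integrating, and using $Pu \in \CI$, one gets a main term controlling $\|A W u\|_{H^s}^2$ near $p_1$, a term controlling it near $p_0$ (the hypothesis), and error terms. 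The error terms are of two kinds: (i) commutators of $\check A$ with the $\Psi^0$-parts, handled by the usual calculus; and (ii) the terms produced by commuting $P$ past the module elements $D_\theta, R$ inside $W$ — by the module-closure property these are words of length $\le N-1$ composed with $\Psi^1$, hence controlled by the inductive hypothesis (after using $\Theta_s$-type regularizers to shift the a priori regularity, exactly as set up before Lemma~\ref{lemma:domsob}). A standard regularization (inserting $\Lambda_\delta \in \Psi^{-\infty}$ converging to the identity, to justify the integration by parts since we do not yet know $Wu \in H^s$) completes the step.

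The main obstacle, and the place requiring genuine care rather than bookkeeping, is controlling the error terms generated by $[P, R]$ in the Lagrangian (second) statement. Unlike $D_\theta$, the scaling field $R$ does not commute with the mass term or with the smooth potentials $V, A_j$, so $[P,R]$ is not simply a multiple of $P$ plus module terms — there is a genuine first-order remainder $\bE \in \Diff^1 M^\circ$ that is \emph{not} in the module. One must check that $\bE u$, and more generally $\bE$ applied to shorter words, is controlled: this works because $\bE$ has real principal symbol of order $1$ and (crucially) because the Lagrangian $\lag = N^*\{t = \pm r\}$ is conormal, the a priori module regularity at $p_0$ of order $N$ propagates the needed order $\le N-1$ regularity of $\bE(\text{shorter words})u$ via the inductive hypothesis together with the elliptic estimate off the characteristic set. (In the massless setting $R$ commutes cleanly and this subtlety disappears — which is the manifestation here of the remark in the introduction that the mass term "disrupts the commutation of the equation with the scaling vector field.") Everything else is the by-now-standard machinery of propagation of module regularity as in \cite{Melrose-Wunsch1, MVW1}, and since we work entirely in $M^\circ$ away from the front face, no $\bo$- or edge-calculus subtleties arise — the ordinary pseudodifferential calculus suffices.
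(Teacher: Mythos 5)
Your proposal is correct in outline, but it takes a genuinely different route from the paper's. The paper also frames the result as propagation of test-module regularity (citing \cite{MVW1}, Proposition~6.11), yet its actual argument sidesteps the commutator bookkeeping entirely: using H\"ormander's symplectic normal form (Theorem~21.2.4 of volume~III) it conjugates $P$ by a microlocally unitary FIO $T$ so that $TP=QD_{z_1}T+E$ with $Q$ elliptic and $E$ smoothing, arranging simultaneously that $\coiso$ (resp.\ $\lag$) is straightened to $\{\zeta=0\}$ and the module generators become the coordinate derivatives $D_{z_j}$; the hypothesis then becomes $D_z^\alpha Tu\in H^s$ near $\Phi(p_0)$, and propagation along the $z_1$-axis is read off directly from the model equation $D_{z_1}Tu\in C^\infty$. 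Your direct positive-commutator induction on the length of module words is the MVW1-style implementation of the same statement; it buys self-containedness (no quantization of a symplectomorphism) at the cost of tracking the module-closure errors you describe, and it is the version that generalizes to the b- and edge-settings used later in the paper. One small imprecision in your algebraic step: the symbol of $[P,L_j]$ need \emph{not} vanish on $\coiso$ --- the non-radial smooth terms $\mathbf{W}_1^\alpha\pa_\alpha$ and the matrix zeroth-order terms contribute a genuine first-order remainder exactly as the mass and potential terms do for $R$. This is harmless, since (as you correctly argue in the $R$ case) a first-order remainder applied to a word of length $N-1$ is absorbed by the inductive hypothesis in the half-order iteration; but the mechanism is identical for $D_\theta$ and for $R$, so the asymmetry you draw between the coisotropic and Lagrangian cases is not real.
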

  \begin{proof}
    The proof is a standard exercise in propagation of
   ``test module regularity'' and is essentially an easier version of
   the b- and edge-calculus arguments employed below to obtain
   propagation through the potential singularity, hence we merely
   sketch it (cf.\ \cite[Proposition~6.11]{MVW1}).

By Taylor's theorem and the symbol calculus, for a solution to $P u
\in \CI,$ the regularity hypothesis
   $p_0 \notin \WF^s
   D_\theta^\alpha u$ for all $\smallabs{\alpha}\leq N$ is
   microlocally equivalent to the assertion that for \emph{any} $A_1, \dots
   A_N \in \Psi^1(M^\circ)$ with proper support, characteristic on $\coiso,$
   $$
A_1\dots A_N u \in H^s.
$$
Now by \cite[Theorem 21.2.4]{Hormander:vol3}, we may find a homogeneous
symplectomorphism $\Phi,$ defined on a neighborhood of $p_0,$  mapping from coordinates $(y,z,\eta,\zeta)$
such that $\sigma(P)\circ \Phi =
\zeta_1 q$ with $q$ elliptic and $\Phi^{-1}(\coiso)=\{\zeta=0\}.$  We
may also assume $\Phi(p_0)=0$ and hence $\Phi(p_1)$ lies on the $z_1$-axis.

We
may then quantize $\Phi$ to a microlocally unitary FIO $T$ such that
$TP = QD_{z_1} T+E$ with $E \in \Psi^{-\infty},$ and where $Q \in
\Psi^1$ is elliptic.
Then $Pu=0$ implies $Q D_{z_1} Tu \in \CI,$ hence $D_{z_1} T u \in
\CI$ by ellipticity. The hypotheses are
equivalent to
$D_z^\alpha T u \in H^s$ near $\Phi(p_0)$ for all
$\smallabs{\alpha}\leq N.$ Solving the equation
$D_{z_1} Tu \in \CI$ then guarantees that the same holds near any
$\Phi(p_1)$ along the $z_1$-axis.

The second part of the result, dealing with Lagrangian regularity,
follows via the same kind of proof: here we conjugate instead to a
coordinate system $(z,\zeta)$ so that the
operators $P, R, D_\theta,$  whose symbols cut out the Lagrangian
$\lag,$ become multiples of the model operators $D_{z_j}$ and proceed
as before.
    \end{proof}

\section{Diffractive theorem}
\label{sec:diffractive}

\subsection{Main theorem}
In this section, we prove the \emph{diffractive theorem}, which tells
us that the only wavefront set emanating from the singularity of the
potential arises at the time of interaction with a singularity of the
solution.

In making our propagation arguments in the b-calculus we will study
  the Dirac equation directly.  It turns out
  to be simplest to deal with the Klein--Gordon operator $P,$ however,
  in making the \emph{elliptic} estimates that constrain where the
  b-wavefront set may lie.  We thus employ both equations in turn in
  proving the diffractive theorem.
  
In both settings, we deal with large potential terms
by employing the Hardy inequality, with the result that our
results only hold for $\abs{\charge}< 1/2$.

\begin{definition}
  A \emph{diffractive geodesic} is a geodesic that is either
  \begin{enumerate}
  \item a lightlike geodesic not passing through $r=0$, or
  \item a continuous concatenation of two lightlike geodesics, both
    passing through $t=t_0,\ r=0$ for some $t_0 \in \RR,$ hence in
    polar coordinates a geodesic passing through the origin at time
    $t=t_0$ with
    $$
r=\smallabs{t-t_0},\ \theta=\begin{cases}\theta_-, & t<t_0\\ \theta_+,& t>t_0.\end{cases}
    $$
    \end{enumerate}
\end{definition}
(\emph{Geodesic} here refers to a geodesic with respect to the
Minkowski metric, hence a straight line.) 
Note  that in the latter case, when the geodesic is broken, there is no
need for the arriving and departing \emph{spatial directions} of the geodesic
to match up as it enters and leaves the origin, though the direction
in time must be conserved.

 We will abuse
notation by using the term \emph{geodesic} interchangeably for the
curve in $M^\circ$ and for its lift to $T^* M^\circ.$

A simple version of the diffractive propagation theorem, making no
reference to b-wavefront set, says that the wavefront set of a
solution to the Dirac equation is, away from the spatial origin, given
by a union of lifts of diffractive geodesics to $T^*\RR^3.$  To prove
the theorem, however, requires proving uniform estimates at the time
the geodesic reaches $r=0,$ which requires analysis of the 
b-wavefront set; Proposition~\ref{prop:propsing} takes care of
propagation away from $r=0.$

In order to describe wavefront sets conveniently, we will use
coordinates associated to the canonical one-form
\begin{equation}\label{bform}
\sigma \frac{dr}r + \eta \cdot d\theta +\tau \, dt
\end{equation}
on $\Tbstar M.$  We may canonically identify this
cotangent bundle with $T^*\RR^4$ away from $r=0:$ this
follows from the observation that $\blowdown$ is a diffeomorphism
away from $r=0$ (identifying $T^{*}\RR^{4}$and $T^{*}M$ there) and
that the natural map $T^{*}M\to \Tbstar M$ is an isomorphism in this
region.

In the coordinates given by \eqref{bform}, for
the radial geodesics (i.e., integral curves of the Hamilton flow of
the metric), $dr/dt=-\sigma/r\tau,$ hence the set where
$\sigma$ and $\tau$ have the same sign should be viewed as
``incoming'' toward $r=0$ under the bicharacteristic flow, while the
set where they have opposite signs is outgoing.  Thus the following
theorem describes propagation into and then back out of the singular
point of the Coulomb potential.
\begin{theorem}\label{theorem:diffractive1}
  Let $\pot = (A_{0} = \charge/ r + V, A_{1}, A_{2},A_{3})$ with
  $V, A_{j} \in \CI(\reals^{3})$, and $\smallabs{\charge}<1/2.$

  Whenever $u$ is an admissible solution of
  $$
  (i\dirac_\pot-m) u=0,
  $$
  if 
  $$
  \{ (r=t_0-t, \theta,t, \sigma , \tau,\eta=0) \colon t<t_0,
  \theta \in S^2, \sigma,\tau \in \RR, \tau\gtrless 0, \sigma\gtrless
  0 \}
  \cap \WF u=\emptyset
  $$
then
$$
\{ (r=t-t_0, \theta, t, \sigma, \tau,\eta=0 )\colon t>t_0,
\theta \in S^2, \sigma,\tau \in \RR, \tau \gtrless 0, \sigma \lessgtr 0\}
\cap \WF u=\emptyset.
$$
  \end{theorem}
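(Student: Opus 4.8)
The plan is to run a positive commutator argument in Melrose's $\bo$-calculus, using the energy identity of Lemma~\ref{lemma:diracenergy} applied to commutants built from invariant $\bo$-pseudodifferential operators (Definition~\ref{definition:invariant}). The key geometric point is that in the coordinates \eqref{bform} the incoming region is $\{\sigma\tau>0\}$ and the outgoing region is $\{\sigma\tau<0\}$, and the $\bo$-bicharacteristic flow at the front face $r=0$ moves monotonically in a quantity like $\sigma/|\tau|$ (or, after normalization on $\Sbstar M$, the function $\omega\equiv \sigma/\sqrt{\sigma^2+\tau^2+|\eta|^2}$). So I would first carry out the elliptic estimate: using the Klein--Gordon operator $P$ of \eqref{Pform}, whose $\bo$-principal symbol is $\tau^2-\sigma^2-|\eta|^2$ (at $r=0$), together with the Hardy inequality \eqref{Hardyconstant} to absorb the singular potential terms $\charge/r$, $\charge/r^2$, and the anti-self-adjoint term $-i\charge r^{-2}\skewsigma$, one shows $\WFb^{1,*}u$ is contained in the $\bo$-characteristic set $\Sigmab=\{\tau^2=\sigma^2+|\eta|^2\}$ — but crucially only after one has control, which is why one works with the \emph{first order} operator $\dop$ in the propagation step proper (as the authors emphasize), the point being that the anti-self-adjoint term is of the same size as the principal terms when estimated via Hardy.

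Next, the heart of the argument: construct a commutant $A=A^*\in\Psib^{*}$, invariant in the sense of Definition~\ref{definition:invariant} (so it commutes with $\Lap_\theta$ and $K$ by Lemma~\ref{lemma:invariantcomm}, and the error from commuting with $\sigma_r$ is microsupported off $\Sigmab$ hence handled by the elliptic estimate), whose symbol is a smooth cutoff $b(\omega)$ supported where $\sigma\tau<0$ but with $b'(\omega)\le 0$ of a definite sign on the outgoing bicharacteristic through the point in question, times cutoffs localizing in $t>t_0$, in $\theta$, and near $\eta=0$, $r=0$. I would compute $\ang{i[\dop,A^*A]u,u}$ using Lemma~\ref{lemma:diracenergy}: the principal part of the commutator produces the Hamilton-derivative of $b$ along the radial flow, which near $r=0$ has a sign; the subprincipal and potential contributions, including the cross term $\frac{\charge}{r}D_t$ and the $\bo$-error terms from Lemma~\ref{lemma:differentialcommutator} and the algebra $\Diff^{*}\!\Psib^{*}$, are controlled using Lemma~\ref{lemma:order-shifting}, Lemma~\ref{lemma:psib-on-h1}, and the Hardy inequality, with the requirement $|\charge|<1/2$ entering exactly so that the Hardy-estimated terms are strictly dominated by the main positive term. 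A regularization (inserting $\Theta_s$ from \S\ref{section:energy} and cutting off the symbol at infinite frequency) makes the a priori pairings finite, and one runs the standard iteration: the incoming hypothesis that $\WF u$ (equivalently $\WFb^{1,s}u$, via Lemma on $\dom^s\cap\mathcal{E}'(\RR^3\setminus 0)$ and Lemma~\ref{lemma:domsob} near $r=0$) is empty on the incoming part feeds the right-hand side, and the positivity of the commutator term then propagates regularity into the outgoing region $\{\sigma\tau<0,\ t>t_0,\ \eta=0\}$.

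Finally, since the estimate is uniform up to and including $r=0$, I would translate the $\bo$-wavefront statement back to the statement about $\WF u$ on $M^\circ$ using the identification of $\Tbstar M$ with $T^*\RR^4$ away from $r=0$, together with Proposition~\ref{prop:propsing} to propagate the conclusion along the outgoing radial null geodesics $r=t-t_0$ away from the origin. The main obstacle — and the place where real care is needed — is the second step: handling the anti-self-adjoint term $-i\charge r^{-2}\skewsigma$ and the minimal-coupling cross term without a sign, which forces the use of the first order equation $\dop u=0$ rather than $Pu=0$ in the commutator argument and requires the sharp Hardy constant together with $|\charge|<1/2$; keeping the error terms in $\Diff^{*}\!\Psib^{*}$ organized so that each is genuinely lower order or genuinely small is the bookkeeping crux.
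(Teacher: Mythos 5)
Your proposal follows essentially the same route as the paper: an elliptic estimate for the second-order operator $P$ controlled via the Hardy inequality, followed by a positive commutator argument for the first-order operator $\dop$ with invariant $\bo$-pseudodifferential commutants (with $|\charge|<1/2$ entering through the sharp Hardy constant), a regularization to justify the pairings, and a final reduction to ordinary wavefront set away from $r=0$ via interior propagation. One caveat on wording: the commutant cannot be supported only in the outgoing region $\{\sigma\tau<0,\ t>t_0\}$ --- in the paper it is supported in a full neighborhood of the radial point $\{\sigma=0,\ r=0,\ t=t_0\}$ so that the cutoff-derivative error terms land in the incoming region where the hypothesis supplies a priori control, which is exactly what your own description of how the incoming hypothesis ``feeds the right-hand side'' implicitly requires.
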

Thus, no wavefront set \emph{arriving} at $r=0$ at time $t=t_0$ implies
no wavefront set \emph{emanating} from $r=0$ at time $t=t_0,$ and we
have established propagation on diffractive geodesics.  Moreover the
sign of $\tau$ is conserved in this interaction.

We will prove Theorem~\ref{theorem:diffractive1} by obtaining a stronger result, uniformly true across $r=0$, concerning the
\emph{propagation of $\bo$-wavefront set}.

\subsection{Propagation of b-regularity}

The following
treatment of the propagation of b-regularity is heavily influenced by
the work of Vasy in the context of manifolds with corners
\cite{Va:08}, which gave in turn a new perspective on previous results
of Melrose--Sj\"ostrand in the boundary case \cite{Melrose-Sjostrand1}, \cite{Melrose-Sjostrand2}.

The main propagation results take place inside the \emph{compressed
  characteristic set,} which is the appropriate extension of the
ordinary characteristic set to the boundary setting.  In coordinates associated to the canonical one-form
\begin{equation*}
  \tauu dt + \sigmau dr + \etau \cdot d\theta
\end{equation*}
on $T^{*}M$, $\Sigma$ is given by
\begin{equation*}
  \Sigma = \{ (r, \theta, t, \sigmau, \etau, \tauu) \mid \tauu^{2} - \sigmau^{2} - \frac{1}{r^{2}}\abs{\etau}^{2}\}.
\end{equation*}
The \emph{compressed characteristic set} $\Sigmadot$, originally due
to Melrose--Sj{\"o}strand~\cite{Melrose-Sjostrand1, Melrose-Sjostrand2}, is the image of the characteristic
set under the natural map $T^{*}M \to \Tbstar M$.  In the coordinates
associated to the canonical one-form
\begin{equation*}
  \tau dt + \sigma \frac{dr}{r} + \eta \cdot d\theta
\end{equation*}
on $\Tbstar M$, $\Sigmadot$ has the following form over $r=0$:
\begin{equation*}
  \Sigmadot |_{r=0} = \{ (r=0, \theta, t, \sigma=0, \eta = 0, \tau )
  \mid \theta \in S^{2}, \tau \neq 0\}.
\end{equation*}

We will obtain Theorem~\ref{theorem:diffractive1} by proving the following more precise
statement.  Recall from equation~\eqref{eth} that $\dop$ is a Dirac--Coulomb operator with
additionally a smooth vector potential, multiplied through by $\gamma^0$.
\begin{theorem}
  \label{thm:bpropagation}
  Assume $u$ is an admissible solution of $\dop u =0$, and assume that $\abs{\charge} <
  1/2$.

  For each $m$, $\WFb^{1,m}u \subset \Sigmadot$.  Away from $r=0$,
  $\WFb^{1,m}u$ is invariant under the bicharacteristic flow.

  Fix $\rho_{0} = \{ (r=0, \theta \in S^{2}, t_{0}, \sigma = 0,
  \eta_{0} = 0, \tau_{0})\} \subset \Sigmadot$ and let $U$ denote a
  neighborhood of $\rho_{0}$ in $\Sigmadot$.  If
  \begin{equation*}
    U \cap \{ \sigma/\tau  > 0\} \cap \WFb^{1,m}u = \emptyset,
  \end{equation*}
  then
  \begin{equation*}
    \rho_{0} \cap \WFb^{1,m}u = \emptyset.
  \end{equation*}
\end{theorem}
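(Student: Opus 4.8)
The plan is to prove the statement by a positive commutator argument in the $\bo$-calculus, in the style of Vasy's propagation estimates on manifolds with corners~\cite{Va:08}, working simultaneously at the level of the first-order operator $\dop$ (for the hyperbolic part, where the large anti-self-adjoint term forces us to avoid squaring) and of the Klein--Gordon operator $P$ (for the elliptic part, where $P$ is principally scalar and real). First I would dispose of the ``easy'' parts. That $\WFb^{1,m}u\subset\Sigmadot$ away from $r=0$ is Proposition~\ref{prop:propsing} together with the identification of $\Tbstar M$ with $T^*\RR^4$ there; that $\WFb^{1,m}u\subset\Sigmadot$ over $r=0$ is an \emph{elliptic} estimate: at a point of $\Sbstar M|_{r=0}$ where $\sigma\neq 0$ or $\eta\neq 0$, one checks from the form~\eqref{Pform} of $P$, using the Hardy inequality~\eqref{Hardyconstant} to absorb the $\charge/r^2$ and $\charge/r$ terms into the $\bo$-principal part (this is where $\abs{\charge}<1/2$ enters), that $P$ is $\bo$-elliptic, and then a microlocal elliptic parametrix (Lemma~\ref{lemma:elliptic-estimation}, applied with $\Diff^2\Psib^*$ operators via Lemma~\ref{lemma:order-shifting}) gives $\WFb^{1,m}u$ away from $\{\sigma=0,\eta=0\}$. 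The remaining set over $r=0$, namely $\{\sigma=0,\eta=0,\tau\neq 0\}$, is exactly $\Sigmadot|_{r=0}$.

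The core of the argument is the propagation statement at $\rho_0\in\Sigmadot|_{r=0}$. Here I would build a commutant $A=A^*\in\Psib^{\bullet}(M)$, \emph{invariant} in the sense of Definition~\ref{definition:invariant} (so that it commutes with $K$ and $\Lap_\theta$ by Lemma~\ref{lemma:invariantcomm}, localizing the spherical-harmonic interactions), microlocalized near $\rho_0$ in $\Tbstar M$, whose symbol is a product of: (i) a cutoff to a small neighborhood $U$ of $\rho_0$ in $\Sigmadot$; (ii) a cutoff in $\sigma/\tau$ to the ``incoming'' region $\{\sigma/\tau>0\}$ where regularity is assumed, together with a term whose $\sigmab$-derivative along the $\bo$-Hamilton vector field $\Hb_{\sigma(P)}$ (or the relevant $\dop$-commutator) has a definite sign on the ``outgoing'' side; and (iii) a weight $r^{-\ell}$ chosen so that the commutator picks up a \emph{favorably-signed} term from the b-normal vector field $r\partial_r$ at the front face. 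The key structural point — the one that makes this a genuine diffractive estimate rather than a contradiction — is that at $\sigma=\eta=0$ over $r=0$ \emph{all} the incoming bicharacteristics enter $\rho_0$ (the radial set is the single point over $r=0$), so the commutant can be supported entirely in the region of assumed regularity and still dominate $\rho_0$; one then runs the pairing $\Re\ang{i[\dop^* \dop - (\text{approx}), A]u, u}$ — or, using the first-order equation and Lemma~\ref{lemma:diracenergy}, the corresponding first-order pairing — to get $\norm{\Lambda u}^2 \le (\text{controlled by }\WFb\text{ in }U\cap\{\sigma/\tau>0\}) + (\text{l.o.t.})$, with $\Lambda$ elliptic at $\rho_0$. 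The error terms from commuting $D_r$, $r^{-1}$ past $\bo$-operators are handled by the algebra $\Diff^*\Psib^*$ and Lemmas~\ref{lemma:differentialcommutator}, \ref{lemma:psib-on-h1}, \ref{lemma:order-shifting}; the term $\sum\alpha_j A_j$ and $V$ are smooth and hence genuinely lower order; the matrix $\sigma_r$, while not commuting with $A$, is $r$-independent, so $[A,\sigma_r]$ is microsupported off the characteristic set (as noted in the Remark after Lemma~\ref{lemma:invariantcomm}) and is absorbed by the elliptic estimate already in hand. A standard regularization (inserting $\Psib^{-N}$ factors and letting $N\to 0$, or a parameter $\epsilon$ in the symbol) upgrades the a~priori-finite-regularity pairing to the claimed $\WFb^{1,m}$ statement, and an iteration/induction on $m$ propagates it to all orders.

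The main obstacle I expect is the \emph{positivity of the commutator in the presence of the large $\charge/r^2$ term}. In the second-order formulation this term is anti-self-adjoint and, by Hardy, comparable in size to the principal $\bo$-terms, so $\Re\ang{i[P,A]u,u}$ does not obviously have the required sign — this is precisely the difficulty flagged in Section~\ref{section:KG}. The resolution, and the delicate part of the write-up, is to do the hyperbolic propagation step \emph{with the first-order operator $\dop$ directly}: the pairing $\Re\ang{i[\dop,A]u,Au}$ involves $\alpha_r(\partial_r + r^{-1} - r^{-1}\beta K)$, whose $\charge$-dependence lives only in the $i\charge/r$ coefficient of $\partial_t$ and in $A_0$, both of which are \emph{lower order} relative to the first-order scaling, so the dangerous $r^{-2}$ enhancement never appears; one then needs to check that the $\bo$-subprincipal/threshold condition for the first-order commutant (the analogue of the sign condition on the weight $\ell$ at $r=0$, governed by $\Im\specb(r\ellipticdirac)\cap[1/2,3/2]=\emptyset$ from~\eqref{noindicialroots}) holds, which is exactly where $\abs{\charge}<1/2$ is sharp. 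Matching the first-order hyperbolic estimate to the second-order elliptic estimate across the overlap region, and keeping the bookkeeping of the four-spinor/matrix structure under control throughout, is the part requiring the most care.
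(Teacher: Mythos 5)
Your overall architecture is the paper's: the containment $\WFb^{1,m}u\subset\Sigmadot$ over $r=0$ is obtained as an elliptic estimate for the second-order operator $P$, with the Coulomb terms absorbed via the Hardy inequality \eqref{Hardyconstant}, and the propagation through $\rho_0$ is a positive commutator argument run directly on the first-order operator $\dop$ with an invariant commutant, precisely to sidestep the large anti-self-adjoint $\charge/r^{2}$ term. The treatment of the error terms (the $\Diff^{*}\!\Psib^{*}$ bookkeeping, the commutators with $\sigma_r$ handled by the elliptic estimate, regularization, induction on the order) also matches the paper's.

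One concrete piece of your plan is wrong, however, and would lead you astray if executed: item (iii) of your commutant construction --- a weight $r^{-\ell}$ designed to extract positivity from the b-normal vector field at the front face --- together with your closing claim that the decisive sign condition is a threshold condition governed by $\Im\specb(r\ellipticdirac)\cap[1/2,3/2]=\emptyset$. The paper's commutant \eqref{eq:hypcommutant} carries no $r$-weight: its symbol is $\smallabs{\tau}^{s+1/2}$ times cutoffs, and the favorably signed main term $-\sgn(\tau_0)Q^{*}Q$ of Lemma~\ref{lemma:commutator-first-order} arises because the $\sigma$-derivative in the commutator (Lemma~\ref{lemma:first-order-commutator}) falls on $\chi_0(2-\phi/\delta)$ with $\phi=-\hat{\sigma}+\omega/(\beta^2\delta)$, $\omega=r^2+(t-t_0)^2$; this is a hyperbolic, Vasy-type estimate built on the monotonicity of $\hat{\sigma}$ along the flow, not a radial-point estimate built on a weight. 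Correspondingly, the indicial computation \eqref{noindicialroots} yields $\smallabs{\charge}<\sqrt{3}/2$ and is used only for essential self-adjointness; the hypothesis $\smallabs{\charge}<1/2$ enters the propagation theorem solely through the elliptic estimate (Lemma~\ref{lemma:ellipticreg}), where the Hardy constant forces $(4+\ep')\charge^{2}<1$ in order to absorb $\norm{\grad A_\lambda u}^{2}$ --- your first paragraph says this correctly, and your last paragraph contradicts it. Two smaller corrections: the commutant is \emph{not} supported entirely in $\{\sigma/\tau>0\}$ (it could not then be elliptic at $\rho_0$, where $\sigma=0$); rather it is supported in a full neighborhood of $\rho_0$, and only the error term $E'$, where the derivative falls on the cutoff $\chi_1$ in $\hat{\sigma}$, lands in the region of assumed regularity. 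And the terms where derivatives fall on $\chi_0$ in the $t$ and $r$ slots ($B_0$, $B_1$) are absorbed not by a weight but by the $\beta^{-1}$ smallness of their symbols relative to $\sigmab(Q)^{2}$ (Lemma~\ref{lemma:Bestimate}), which is the reason the parameter $\beta$ appears in $\phi$ at all.
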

Note that the openness of the complement of $\WFb^{1,m} u$
 means that the theorem yields regularity at the outgoing points (where $\sigma/\tau<0$)
  sufficiently near $\rho_0.$

In fact, we prove a stronger statement for the inhomogeneous problem,
in which
\begin{equation*}
  \WFb^{1,m}u \subset \Sigmadot \cup \WFb^{m}(\dop u),
\end{equation*}
and if
\begin{equation*}
  U \cap \WFb^{0, m+1}(\dop u) = \emptyset
\end{equation*}
and
\begin{equation*}
  U \cap \{  \sigma/\tau > 0 \} \cap \WFb^{1,m}u = \emptyset,
\end{equation*}
then $\rho_{0} \cap \WFb^{1,m}u = \emptyset$, with analogous
statements with the additional factors included.

We also prove a statement about the propagation of coisotropic regularity.
\begin{theorem}
  \label{thm:b-module-reg}
  The same statements hold with $u$ replaced by $K^{\ell}u$ or
  $R^{\ell}u$, where $K$ is Dirac's $K$-operator and
  $R = (t-t_{0})D_{t} + rD_{r}$ is the scaling vector field.  More
  precisely, for each $\ell$, $\WFb^{1,m}(K^{j}u)$ and
  $\WFb^{1,m}(R^{j}u)$ are invariant under bicharacteristic flow away
  from $r=0$ for $j = 0 , \dots, \ell$ and if
  \begin{equation*}
    U \cap \{  \sigma/\tau> 0 \} \cap \WFb^{1,m} (S^{j}u) = \emptyset
  \end{equation*}
  for $S = K$ or $S = R$ and all $j \leq \ell$, and $\rho_{0} \cap
  \WFb^{m+1}(S^{j}\dop u)$ for $j = 0 , \dots, \ell$, then
  \begin{equation*}
    \rho_{0} \cap \WFb^{1,m}(S^{j}u) = \emptyset
  \end{equation*}
  for all $j \leq \ell$.
\end{theorem}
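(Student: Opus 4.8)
The plan is to argue by induction on $\ell$, with the base case $\ell=0$ being Theorem~\ref{thm:bpropagation} itself and the inductive step a propagation-of-module-regularity argument modeled on \cite{MVW1} and \cite{Melrose-Wunsch1}. The assertion away from $r=0$ is immediate from Proposition~\ref{proposition:interiorcoiso}: by \eqref{Ksquared} we have $K^2=\Lap_\theta+\beta K$, so iterated regularity under $K$ is, microlocally away from $r=0$, equivalent modulo lower order to iterated regularity under $D_\theta$, while $R=(t-t_0)D_t+rD_r$ is the scaling vector field whose symbol, together with those of $\dop$ (equivalently $P$) and $D_\theta$, cuts out the Lagrangian $N^*\{t-t_0=\pm r\}$ inside $\coiso$; the first and second parts of Proposition~\ref{proposition:interiorcoiso} then give the $K^j u$ and $R^j u$ statements respectively. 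So the real work is to propagate module regularity across $r=0$.

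The key ingredient is the commutator structure of $\dop$ with $K$ and with $R$. For $K$: by the lemma of \S\ref{sec:separation}, $K$ commutes with the exact radial Dirac--Coulomb operator, so in $[\dop,K]$ only the smooth perturbations $V$ and $A_j$ of \eqref{eth} contribute, and since $\bL=\br\times\bp$ annihilates $r$ and acts only in the angular variables one computes
\begin{equation*}
  [\dop,K]=-\beta\bSigma\cdot(\bL V)-\sum_j\alpha_j\beta\bSigma\cdot(\bL A_j),
\end{equation*}
which is multiplication by a smooth matrix function vanishing to first order at $r=0$; in particular $[\dop,K]\in r\CI\subset\Psib^0$. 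For $R$: splitting $\dop=\dop_{\mathrm{hom}}+\dop_{\mathrm{sm}}$, where $\dop_{\mathrm{hom}}$ collects the terms homogeneous of degree $-1$ under the joint scaling of $t$ and $x$ (this includes the \emph{entire} Coulomb term $-\charge/r$) and $\dop_{\mathrm{sm}}=-V-\sum_j\alpha_jA_j-m\beta\in\CI$ is bounded, and using the $t$-translation invariance of the equation (so $[D_t,\dop]=0$), one finds $[\dop,R]\in\CC\dop+\CI$, again with no new factor of $r^{-1}$. Finally $[K,R]=0$. Thus the test module $\module$ generated inside $\Diff^*\Psib^*$ by $\dop$, $K$, and $R$ is a filtered Lie algebra and, crucially, commuting $\dop$ past $K$ or $R$ produces no new powers of $\charge/r$: the only large terms in the whole argument remain the $\charge/r$ (and, in the elliptic estimate, $\charge/r^2$) already present in $\dop$, so no smallness of $\charge$ beyond $\abs{\charge}<1/2$ is needed.

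The inductive step then reruns the positive-commutator proof of Theorem~\ref{thm:bpropagation}, but with the scalar invariant $\bo$-commutant $A$ replaced by $\Lambda^*A\Lambda$, where $\Lambda$ is a product of at most $\ell$ copies of the generator $S=K$ (resp.\ $S=R$). Expanding $[\dop,\Lambda^*A\Lambda]$, every error term is of a type already handled: a commutator of $\dop$ with a module generator, which by the previous paragraph lies in $\module$ and hence is ultimately a module element of order $\leq\ell-1$ applied to $u$, controlled by the inductive hypothesis together with the incoming assumption $U\cap\{\sigma/\tau>0\}\cap\WFb^{1,m}(S^j u)=\emptyset$; a commutator term carrying an extra $r^{-1}$ from the intrinsic $\charge/r$ of $\dop$, estimated by the Hardy inequality \eqref{Hardyconstant} using $\abs{\charge}<1/2$, exactly as in the base case; or a term microsupported off $\Sigmadot$, disposed of by the elliptic estimate. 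The inhomogeneous contributions are governed by the hypotheses $\rho_0\cap\WFb^{m+1}(S^j\dop u)=\emptyset$ and their $\WFb^{0,m+1}$ analogues on $U$. Since invariant commutants commute with $K$ by Lemma~\ref{lemma:invariantcomm}, the generator $K$ passes freely through $A$, which further streamlines the $S=K$ case.

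I expect the main obstacle to be the bookkeeping in the hyperbolic (first-order) part of the argument stemming from the non-commutation of $K$ with $\alpha_r=\gamma_5\Sigma_r$ noted in the remark following Lemma~\ref{lemma:invariantcomm}: when $K$ is commuted through $\dop$ directly---rather than through the model operator, with which it commutes exactly---one meets terms involving $[\alpha_r,K]$ and $[\alpha_r,\beta K]$. Since $\alpha_r$ depends only on $\theta$, these terms carry no $\bo$-derivative in $r$ and are microsupported where the symbol of $\dop$ is elliptic, so they are absorbed by the elliptic estimate inside the commutator computation; but tracking them, simultaneously with the Hardy estimates on the $\charge/r$ and $\charge/r^2$ contributions and the iteration over module factors, is delicate. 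A secondary, organizational difficulty is setting up the filtered module $\module$ and the associated notion of module regularity relative to $H^{\pm1}$ so that the order counting closes: one must use that the symbols of $K$ and $R$ are characteristic on---respectively coisotropic and Lagrangian over---the relevant submanifolds, so that dressing the commutant with $\ell$ module factors does not cost the base order $m$.
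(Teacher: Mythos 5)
Your overall architecture---induction on $\ell$, commutant of the form $S^{\ell}A^{*}AS^{\ell}$, Proposition~\ref{proposition:interiorcoiso} for the statement away from $r=0$, and the identity $[\dop,R]\in\CC\dop+\CI$ coming from the homogeneity of the Coulomb term---matches the paper, and your treatment of the $S=R$ case is essentially correct. The gap is in the $S=K$ case, and it begins with your commutator formula. You assert that $[\dop,K]$ is multiplication by a smooth matrix vanishing at $r=0$; that is only the contribution of the scalar potential $V$. For the term $\sum_{j}\alpha_{j}A_{j}$ the matrices $\alpha_{j}$ do not commute with the matrix part $\beta(1+\bSigma\cdot\bL)$ of $K$: one computes $[\beta\Sigma_{k},\alpha_{j}]=2\delta_{jk}\,\beta\gamma_{5}$, so $[\sum_{j}\alpha_{j}A_{j},K]$ contains a term proportional to $A_{j}L_{j}$, a genuine first-order angular differential operator whose coefficient need not vanish at $r=0$. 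This is precisely the content of Lemma~\ref{lemma: b-module-commutators}: $[\dop,K^{\ell}]$ is a sum of terms $K^{j}\bB K^{\ell-1-j}$ with $\bB\in\Diffb^{1}$ differentiating in the angular variables, not a module element of order $\ell-1$.

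Consequently the error terms $\norm{AK^{j}\bB K^{\ell-1-j}u}^{2}$ sit at the \emph{top} module order $\ell$ (the angular derivative in $\bB$ counts as one power of $K$ modulo lower order), so they are not ``controlled by the inductive hypothesis'' as you claim; they must be absorbed into the positive term $\norm{QK^{\ell}u}^{2}$, and with a constant that is not arbitrarily small. The paper supplies the mechanism your proposal lacks: Lemma~\ref{lemma:bound-a-by-q} gives $\norm{Av}\leq C(\sqrt{\delta}\,\norm{Qv}+\cdots)$, so after rewriting $AK^{j}\bB K^{\ell-1-j}u$ in terms of $AK^{\ell}u$ (via Lemma~\ref{lemma:invariantcomm}, Lemma~\ref{lemma:back-to-K}, and the identities $K^{2m}=\Lap_{\theta}^{m}+\mathcal{L}_{2m}$, $K^{2m+1}=\Lap_{\theta}^{m}K+\mathcal{L}_{2m+1}$ of \eqref{K2m}) the offending terms acquire a factor of $\delta$ and are absorbed by shrinking the support parameter $\delta$ of the commutant \eqref{eq:hypcommutant}. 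Without the corrected commutator and this $\delta$-absorption step the induction does not close. (Your argument would go through as written only under the additional hypothesis $A_{1}=A_{2}=A_{3}\equiv 0$, but the theorem is stated for general smooth vector potentials.)
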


\begin{remark}
  The statement for $K$ provides a proof of the propagation of
  Lagrangian regularity through the singularity.  It immediately
  follows that a similar statement (with hypotheses modified as
  needed) holds for $K^{\ell}R^{k}u$; this shows that coisotropic
  regularity (in the $\bo$-sense) also propagates through the singularity.
\end{remark}

Since $\WFb^{1,m}$ is closed, this theorem \emph{implies}
Theorem~\ref{theorem:diffractive1} as follows:

\begin{proof}[Proof of Theorem~\ref{theorem:diffractive1} using Theorem~\ref{thm:bpropagation}]
Assuming the hypotheses of
Theorem~\ref{theorem:diffractive1}, we first can use ordinary
propagation of singularities and elliptic regularity over $M^\circ$ to conclude that a
neighborhood of
$$\{ (r=0, \theta, t=t_0, \sgn\sigma = \sgn\tau) \colon \theta \in S^2 \}
$$
over $M^\circ$ is disjoint from the wavefront set, since the backward
bicharacteristic flowout of any of these points lies in the region
where our hypotheses yield regularity, provided we take a sufficiently small
such neighborhood.  Without loss of generality, we will focus on the
component $\tau<0,$ with the other component to be treated mutatis mutandis.

Now we find that since ordinary and b
wavefront sets coincide for $r>0,$ over a neighborhood
of $(r=0, \theta \in S^2, t_0),$ $\WFb^{1,m} u\cap
\{r>0,\ \sgn \sigma\tau=1,\ \eta=0\}=\emptyset;$ since $\dot\Sigma \cap \{r=0\}\subset
\{\sigma=0\},$ this suffices to establish the existence of $U$ as in
the hypotheses of Theorem~\ref{thm:bpropagation}, where we have taken
fixed a sign of $\tau.$  Thus
Theorem~\ref{thm:bpropagation} implies that $\rho_0 \cap
\WFb^{1,m}=\emptyset;$ since $\WFb^{1,m}$ is closed, this implies the
existence of an open neighborhood of $\rho_0$ in $\dot \Sigma$ that is
disjoint from $\WFb^{1,m} u,$ and in particular, there is a
such neighborhood in $\dot \Sigma \cap \{\sigma>0, r>0\}.$

This is then the projection of an open neighborhood in $\Sigma,$ the
usual characteristic set, that is disjoint from $\WFb^{1,m} u$ and
where $\tau\sigma<0,$ and in particular contains a point in every
bicharacteristic $(r=t-t_0, t>t_0, \sgn\tau\sigma=-1, \eta=0);$
this completes the proof of
Theorem~\ref{theorem:diffractive1} (since $\WF u = \overline{\bigcup_m
  \WF^m u}$).
\end{proof}

We now proceed with the proof of Theorem~\ref{thm:bpropagation}.
To this end, we begin with preliminary estimates on commutators, with
a crucial role played by commutators between $\Box$ and b-operators
that are rotationally symmetric in the space variables.

\subsubsection{b-Commutators}

We record for our use below the form of the commutator of an invariant
(defined above in Definition~\ref{definition:invariant})
$\bo$-pseudodifferential operator with the second order operator $P$
and the first order operator $\dop$.

\begin{lemma}
  \label{lemma:commutator1}
  Let $C\in \Psib^{m}(M)$ be invariant, with principal symbol $c$
  scalar and real-valued.  Then, for $P$ satisfying the Klein-Gordon
  hypotheses of Section~\ref{section:KG},
  \begin{equation*}
    [P, C] = B_{0} \frac{1}{r^{2}}\Lap_{\theta} + B_{1},
  \end{equation*}
  where
  \begin{itemize}
  \item $B_{0} \in \Psib^{m-1}$ and 
  \item $B_{1}\in \Diff^{2}\!\Psib^{m-1} + \Diff^{1}\!\Psib^{m} +  \Psib^{m+1}$.
  \end{itemize}
  Both $B_{0}$ and $B_{1}$ are microsupported in $\WFb'C$.
\end{lemma}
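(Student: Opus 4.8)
The plan is to expand $P$ using the Klein--Gordon hypotheses \eqref{Pform}--\eqref{Rform}, commute $C$ through each summand, and collect. Writing the spatial Laplacian and the $\charge/r$--coupling in polar coordinates,
\begin{equation*}
  P = -\pa_t^2 + \pa_r^2 + \frac{2}{r}\pa_r - \frac{1}{r^2}\Lap_\theta
      - 2i\frac{\charge}{r}\pa_t + \frac{\charge^2}{r^2}
      - i\frac{\charge}{r^2}\skewsigma - m^2 + \bR ,
\end{equation*}
with $\bR = \charge\mathbf{W}_0/r + \mathbf{W}_1^\alpha\pa_\alpha + \mathbf{W}_2$ as in \eqref{Rform}. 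The ingredients for the commutations are: Lemma~\ref{lemma:invariantcomm}, which gives $[\Lap_\theta, C] = [K, C] = 0$ since $C$ is invariant and scalar; Lemma~\ref{lemma:differentialcommutator} for $[D_r, C]$ and for $[r^{-1}, C]$, which we use both in the form $[r^{-1},C] = C_L r^{-1}$ with $C_L \in \Psib^{m-1}$ (pushing the factor of $r^{-1}$ to the right of a $\bo$-operator) and in the form $[r^{-1},C] = r^{-1}C_R$; the fact that commuting $C$ past any b-vector field leaves it in $\Psib^m$; the algebra structure of $\Diff^*\!\Psib^*$; and microlocality, $\WFb'[\,\cdot\,,C] \subseteq \WFb'C$.

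Only one summand produces the $\Lap_\theta$--term. Since $\Lap_\theta$ is purely angular it commutes with $r^{-1}$, and since $C$ commutes with $\Lap_\theta$ we get $[-\frac{1}{r^2}\Lap_\theta, C] = -[r^{-2}, C]\Lap_\theta$; iterating Lemma~\ref{lemma:differentialcommutator} in the ``$r^{-1}$-on-the-right'' form gives $[r^{-2},C] = B_0 r^{-2}$ with $B_0 \in \Psib^{m-1}$ (in fact again invariant), so this summand equals $-B_0\frac{1}{r^2}\Lap_\theta$, which is the asserted leading term. Every remaining commutator I expect to land in the class $\Diff^2\!\Psib^{m-1} + \Diff^1\!\Psib^m + \Psib^{m+1}$: $[-\pa_t^2, C] \in \Psib^{m+1}$ since $\pa_t \in \Vb$; $[\pa_r^2 + \frac{2}{r}\pa_r, C]$ expands via $[D_r,C] = B + C'D_r$ (with $B\in\Psib^m$, $C'\in\Psib^{m-1}$), where the apparently dangerous $r^{-1}D_r\Psib^m$--type contributions are actually absent because $\frac{1}{r^2}(r\pa_r)(r\,\cdot\,) = \frac{1}{r}(\,\cdot\,) + \pa_r(r\,\cdot\,)$ removes one factor of $r^{-1}$; $[-2i\frac{\charge}{r}\pa_t, C]$ and $[\frac{\charge^2}{r^2}, C]$ are of the same type, using $[r^{-1},C]$ and $[\pa_t,C]$; and $[-m^2, C] = 0$.

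Two summands deserve comment. For the matrix term $-i\frac{\charge}{r^2}\skewsigma$, the operator $C$ does \emph{not} commute with $\skewsigma$; but $\skewsigma$ is a smooth bounded matrix depending on neither $r$ nor $t$, so $[\skewsigma, C] \in \Psib^{m-1}$ and
\begin{equation*}
  \bigl[-i\tfrac{\charge}{r^2}\skewsigma,\, C\bigr] = -i\charge\bigl( [r^{-2},C]\,\skewsigma + r^{-2}[\skewsigma,C]\bigr) \in \Diff^2\!\Psib^{m-1}.
\end{equation*}
For $\bR$, the only genuinely singular piece is $\charge\mathbf{W}_0/r$; Taylor--expanding $\mathbf{W}_0$ about the origin, its leading part $\charge\mathbf{W}_0(0)/r$ has a \emph{constant} matrix coefficient, which commutes with the scalar operator $C$, so $[\charge\mathbf{W}_0(0)/r, C] = \charge\mathbf{W}_0(0)\,[r^{-1},C] \in \Diff^1\!\Psib^{m-1}$; the smooth remainder of $\mathbf{W}_0/r$, together with $\mathbf{W}_1^\alpha\pa_\alpha$ (which in polar coordinates are combinations of $\pa_r$, $r^{-1}\pa_\theta$ and $\pa_t$ with smooth matrix coefficients) and $\mathbf{W}_2$, commute with $C$ modulo $\Diff^1\!\Psib^m + \Psib^{m+1}$. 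Assembling the pieces gives $[P,C] = B_0\frac{1}{r^2}\Lap_\theta + B_1$ with the stated orders, and microlocality of $\WFb'$ together with the vanishing of the identity's contribution gives $\WFb'B_0 \cup \WFb'B_1 \subseteq \WFb'C$.

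I expect the only real difficulty to be the simultaneous bookkeeping of the differential order, the power of $r^{-1}$, and the $\bo$-order through the repeated commutations needed above: one must use $[\Lap_\theta, C] = 0$ to split off the single ``bad'' term cleanly, and one must exploit the cancellation that occurs when $r^{-2}$ meets $(r\pa_r)^2$, in order to ensure that nothing of type $\Diff^2\!\Psib^m$ or $\Diff^1\!\Psib^{m+1}$ survives in $B_1$. The anti-self-adjoint, scaling-principal matrix term creates no obstruction here precisely because $\skewsigma$ carries no power of $r$.
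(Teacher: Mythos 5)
Your proof is correct and takes essentially the same route as the paper's, which simply observes that the $B_{0}$ term comes from commuting $C$ through $r^{-2}\Lap_{\theta}$ (using invariance) and that $P + r^{-2}\Lap_{\theta} \in \Diff^{2}\!\Psib^{0}$, so Lemma~\ref{lemma:differentialcommutator} and the algebra structure of $\Diff^{*}\!\Psib^{*}$ dispose of everything else; your version merely carries out that bookkeeping term by term. (One cosmetic slip: the identity $\tfrac{1}{r^2}(r\pa_r)(r\,\cdot\,) = \tfrac{1}{r}(\,\cdot\,)+\pa_r(r\,\cdot\,)$ as written is wrong, but the claim it is meant to support—that the radial terms $\pa_r^2+\tfrac{2}{r}\pa_r$ contribute only to $\Diff^{2}\!\Psib^{m-1}+\Diff^{1}\!\Psib^{m}+\Psib^{m+1}$—is correct and checks directly from $[D_r,C]=B+C'D_r$ and $[r^{-1},C]=C_Lr^{-1}$.)
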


\begin{proof}
  The term containing $B_{0}$ arises by commuting $C$ through the
  $\frac{1}{r^{2}}\Lap_{\theta}$ term in $P$.  The remaining terms in $P$
  contribute to the $B_{1}$ term; as
  \begin{equation*}
    P + \frac{1}{r^{2}}\Lap_{\theta} \in \DiffPsi^{0}, 
  \end{equation*}
  Lemma~\ref{lemma:differentialcommutator} shows that this commutator
  lies in $\DiffPsi^{m+1}$.
\end{proof}

\begin{lemma}
  \label{lemma:first-order-commutator}
  Let $C \in \Psib^{m}(M)$ be invariant, with principal symbol $c$
  scalar and real-valued.  Then
  \begin{equation}
    \label{Cfirstordercommutator}
    \frac{1}{i}[\dop, C] = A_{0} \left( \alpha_{r} \left( i \pd[r] +
        \frac{i}{r} - \frac{i}{r}\beta K\right) - \frac{\charge}{r}\right)
    +B_{0} + \alpha_{r}B_{1} + \bB_{2}\frac{1}{r} + \bB_{3}D_{r} +
    \bB_{4} + \bB_{5} \frac{1}{r} + \bB_{6} D_{r},
  \end{equation}
  where
  \begin{itemize}
  \item $A_{0} \in \Psib^{m-1}(M)$, with $\sigmab(A_{\bullet}) =
    -\pd[\sigma](c)$, 
  \item $B_{0} \in \Psib^{m}(M)$, with $\sigmab(B_{0}) = \pd[t](c)$,
  \item $B_{1} \in \Psib^{m}$, with $\sigmab(B_{1}) = \pd[r](c)$,
  \item $\bB_{2} \in \Psib^{m}(M)$, with $\supp \sigmab(\bB_{2})
    \subseteq \supp \pd[\eta](c)$,
  \item $\bB_{3}\in \Psib^{m-1}(M)$, with $\supp
    \sigmab(\bB_{3})\subseteq \supp\pd[\eta](c)$, 
  \item $\bB_{4} \in \Psib^{m-1}(M)$, and
  \item $\bB_{5}$, $\bB_{6} \in \Psib^{m-2}(M)$.
  \end{itemize}
\end{lemma}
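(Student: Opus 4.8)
The plan is to expand $\tfrac{1}{i}[\dop,C]$ by the Leibniz rule after splitting $\dop$, written as in \eqref{eth}, according to how its terms behave at $r=0$. Write $\dop = i\pd[t] + \mathcal{N} + \mathcal{S}$, where $\mathcal{N} = \alpha_{r}\bigl(i\pd[r] + \tfrac{i}{r} - \tfrac{i}{r}\beta K\bigr) - \tfrac{\charge}{r}$ gathers the terms singular at the boundary (note $\pd[r] = \tfrac{1}{r}(r\pd[r])$ is not a $b$-vector field) and $\mathcal{S} = -V - \sum_{j}\alpha_{j}A_{j} - m\beta$ is the smooth bounded remainder; then $\tfrac{1}{i}[\dop,C]$ is the sum of $\tfrac{1}{i}[i\pd[t],C]$, $\tfrac{1}{i}[\mathcal{N},C]$ and $\tfrac{1}{i}[\mathcal{S},C]$, which I would treat in turn.

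The two outer summands are immediate. As $\pd[t]$ is a $b$-vector field with $t$-independent coefficients, $\tfrac{1}{i}[i\pd[t],C] = [\pd[t],C]$ is the $b$-operator obtained by differentiating the full symbol of $C$ in the base variable $t$: this is the term $B_{0}\in\Psib^{m}$ with $\sigmab(B_{0}) = \pd[t]c$. Since $C$ is scalar (Definition~\ref{definition:invariant}) it commutes with the constant matrices $\beta$ and $\alpha_{j}$, so $\tfrac{1}{i}[\mathcal{S},C] = -\tfrac{1}{i}[V,C] - \sum_{j}\alpha_{j}\tfrac{1}{i}[A_{j},C]$ is a finite sum of commutators of $C\in\Psib^{m}$ with functions in $\CI$, each dropping the order by one; these are absorbed into $\bB_{4}\in\Psib^{m-1}$.

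The content is in $\tfrac{1}{i}[\mathcal{N},C]$. The tools are Lemma~\ref{lemma:differentialcommutator}, which writes $[D_{r},C] = \widetilde{B} + \widetilde{C}D_{r}$ and $[r^{-1},C] = r^{-1}C_{R}$ with $\widetilde{B}\in\Psib^{m}$ ($\sigmab(\widetilde{B}) = \tfrac{1}{i}\pd[r]c$), $\widetilde{C},C_{R}\in\Psib^{m-1}$ ($\sigmab = \tfrac{1}{i}\pd[\sigma]c$); Lemma~\ref{lemma:invariantcomm}, which gives $[K,C]=0$ and hence $[\beta K,C]=0$; and the fact that the only nonzero matrix commutator is $[\alpha_{r},C]$, which, since $\alpha_{r} = \gamma_{5}\Sigma_{r}$ depends on $\theta$ alone, lies in $\Psib^{m-1}$ with principal symbol a multiple of $\pd[\eta]c$, hence microsupported in $\supp\pd[\eta]c$ (the mechanism of the remark after Lemma~\ref{lemma:invariantcomm}). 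Each of the four terms of $\mathcal{N}$ carries exactly one singular factor, $D_{r}$ or $r^{-1}$; commuting $C$ past it produces $\widetilde{C}$ or $C_{R}$ there. Taking $A_{0}$ to be the appropriate scalar multiple of $C_{R}$ (so $A_{0}\in\Psib^{m-1}$, $\sigmab(A_{0}) = -\pd[\sigma]c$) and moving this factor to the far left of each term — incurring further, lower-order commutators — reassembles the leading parts into the stated main term $A_{0}\mathcal{N}$. What remains is: the $\widetilde{B}$-term from $[D_{r},C]$, giving $\alpha_{r}B_{1}$ with $B_{1}\in\Psib^{m}$, $\sigmab(B_{1}) = \pd[r]c$; the $[\alpha_{r},C]$-pieces, which, as $\alpha_{r}$ commutes with $r^{-1}$ and $D_{r}$, appear directly as $\bB_{2}\tfrac{1}{r}$ and $\bB_{3}D_{r}$, the single genuinely order-$m$ contribution being $[\alpha_{r},C]\beta K$ (a product of operators of orders $m-1$ and $1$), with principal symbol supported in $\supp\pd[\eta]c$, the lower-order pieces meeting that condition vacuously; and, finally, the subprincipal discrepancies among $\widetilde{C}$, $C_{R}$ and $A_{0}$ together with all reordering commutators, which drop two orders and are collected into $\bB_{4}\in\Psib^{m-1}$, $\bB_{5}\tfrac{1}{r}$ and $\bB_{6}D_{r}$ with $\bB_{5},\bB_{6}\in\Psib^{m-2}$.

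I expect the obstacle to be organizational rather than conceptual: there is no single clean identity, only a dozen-odd Leibniz terms to be sorted into the eight declared buckets while keeping straight (i) the matrix noncommutativity, in particular $\alpha_{r}\beta = -\beta\alpha_{r}$ and $[\alpha_{r},C]\neq 0$; (ii) the bookkeeping of $r^{-1}$ as carrying an order, so that an apparently oversized leftover such as $r^{-1}\Psib^{m-1}$ is legitimately split as $\bB_{2}\tfrac{1}{r} + \bB_{5}\tfrac{1}{r}$; and (iii) the support constraint $\supp\pd[\eta]c$ on the angular error terms, which is precisely the feature exploited in the hyperbolic estimate of Section~\ref{sec:hyperbolic-diffractive}, where the commutant is chosen with $\pd[\eta]c$ supported off the characteristic set.
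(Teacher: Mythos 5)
Your proposal is correct and follows essentially the same route as the paper's proof: the same splitting of $\dop$ into temporal, singular (radial, Coulomb, angular) and smooth pieces, the same appeal to Lemma~\ref{lemma:differentialcommutator} for $[D_r,C]$ and $[r^{-1},C]$ and to the angular dependence of $\alpha_r$ and $K$ for the $\supp\pd[\eta](c)$ microsupport claims, and the same sorting of the Leibniz terms into the stated buckets with the $\pd[\sigma]c$-pieces reassembled into $A_0$ times the singular part of $\dop$. The only cosmetic difference is that you invoke $[\beta K,C]=0$ outright via Lemma~\ref{lemma:invariantcomm} where the paper merely uses that this commutator is microsupported in $\supp\pd[\eta](c)$; both are valid.
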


\begin{remark}
  Non-scalar pseudodifferential operators are in bold in the expressions above; roman terms
  are scalar.
\end{remark}

\begin{proof}
  We write
  \begin{equation*}
    \dop = i \pd[t] - \frac{\charge}{r} + i \alpha_{r} \left( \pd[r] +
      \frac{1}{r} - \frac{1}{r}\beta K\right) - \alpha_0 V -\alpha_{j}A_{j},
  \end{equation*}
  where we use the convention that $\alpha_{0} = \id$.

  We begin with the angular term.  Because $\alpha_{r}$ and $K$ depend
  only on the angular variables, their commutators with the invariant
  operator $C$ are microsupported in the support of $\pd[\eta]c$.
  Writing
  \begin{equation*}
    \frac{1}{i} [ - \alpha_{r} \frac{i}{r} \beta K, C] = - \alpha_{r}
    \frac{1}{r}[\beta K, C] - \frac{1}{r}[\alpha_{r}, C] \beta K -
    [\frac{1}{r}, C]\alpha_{r}\beta K,
  \end{equation*}
  we see that the first two terms give contributions to $\bB_{2}$,
  while the last term yields the angular part of the $A_{0}$ term
  above.  (Indeed, we take this to define the operator $A_{0}$.)

  We now turn to the terms involving the commutator with $i \pd[t] -
  \frac{\charge}{r}$.    The $[\pd[t], C]$ term gives $B_{0}$,  while
  the $\frac{\charge}{r}$ term contributes to the $A_{0}$ and
  $\bB_{5}$ terms.  

  We now consider the term involving
  $i \alpha_{r} (\pd[r] + \frac{1}{r})$.  We observe that because
  $\alpha_{r}$ depends only on the angular variables, its commutator
  with $C$ is microsupported in the support of $\pd[\eta](c)$,
  yielding a contribution to the $\bB_{3}$ term.  Since
  \begin{equation*}
    - \frac{1}{i}[D_{r}, C] = S + TD_{r},
  \end{equation*}
  where
  \begin{equation*}
    S \in \Psib^{m}, \quad \sigmab(S) = \pd[r](c),
  \end{equation*}
  and
  \begin{equation*}
    T \in \Psib^{m-1}, \quad \sigmab(T) = \pd[\sigma](c), 
  \end{equation*}
  we see that the rest of this term yields contributions to the terms
  involving $A_{0}$, $B_{1}$, and $\bB_{6}$.

  Finally, the commutator of $-\alpha_0 V - \alpha^{j}\pot_{j}$ with $C$ yields
  the $\bB_{4}$ term.
\end{proof}

\subsubsection{Elliptic estimate}
\label{sec:elliptic}
The estimates in this section are very close to those in~\cite[Section
4]{Va:08}, hence we will be somewhat brief in the proofs; the main
difference here is in the potential terms, which need to be controlled
using the Hardy inequality.  Unlike in the proof of the hyperbolic
estimate in the next section, we work here with the second order
equation in order to obtain more direct control over the $H^{1}$
norm.  

\begin{lemma}
  \label{lemma:ellipticreg}
  If $\smallabs{\charge} < 1/2$ then for all $u \in H^{1}$,
  $\displaystyle\WFb^{1,m}u \subset \WFb^{-1, m}(Pu) \cup \dot\Sigma$.
\end{lemma}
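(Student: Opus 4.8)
The plan is to run a standard positive-commutator/elliptic-parametrix argument in the $\bo$-calculus, but with the extra care demanded by the fact that $P$ is not a $\bo$-operator and that the potential terms $\charge/r$, $\charge/r^2$ are large in the sense of scaling. I would argue by induction on $m$: supposing $\rho_0 \notin \dot\Sigma$ and $\rho_0 \notin \WFb^{-1,m}(Pu)$, and knowing (inductive hypothesis) that $\rho_0 \notin \WFb^{1,m-1/2}u$ say, I want to conclude $\rho_0 \notin \WFb^{1,m}u$. Since $\rho_0$ is $\bo$-elliptic for a certain second-order $\bo$-operator, the goal is to construct a commutant/test operator $A \in \Psib^{m-1}$, elliptic at $\rho_0$ and microsupported in the elliptic set of $P$ (away from $\dot\Sigma$), and to estimate $\langle Pu, A^*Au\rangle$ in two ways.

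First I would fix a microlocal cutoff: choose $G \in \Psib^0$ elliptic at $\rho_0$, microsupported away from $\dot\Sigma$ (possible since $\rho_0\notin\dot\Sigma$ and $\dot\Sigma$ is closed), and with $Gu$ already as regular as allowed by the $\WFb^{-1,m}(Pu)$ hypothesis and the inductive hypothesis. On $\WFb'G$ the full symbol $\tau^2-\sigma^2 - r^{-2}|\eta|^2$ of $P$ (modulo lower-order and potential terms) is elliptic. The key structural point — exactly as in Vasy~\cite{Va:08} and made available here by the Klein--Gordon hypotheses \eqref{Pform}--\eqref{Rform} — is that $P + r^{-2}\Lap_\theta \in \DiffPsi^0$, i.e. after removing the bad angular term $P$ is built from $D_t$, $D_r$, $r^{-1}$ and smooth coefficients, each of which interacts cleanly with $\Psib$. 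Then I write the parametrix: pick $E \in \DiffPsi^{-2}$ (in the sense of $\Diff^2\Psib^{-2}$, allowing the powers $r^{-1}D_r^{-1}$-type terms absorbed into the $\bo$-calculus) inverting $P$ microlocally on $\WFb'G$, so that $EP = G + R$ with $\WFb'R \cap \WFb'A = \emptyset$ for a suitable $A\in\Psib^{m-1}$ elliptic at $\rho_0$ with $\WFb'A \subset \liptic G$.

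The estimate then goes: $Au = A E P u + A R u + A(\Id - EP - \cdots)u$, where the $ARu$ term is controlled by the inductive hypothesis (microsupport of $R$ avoids $\rho_0$), the $AEPu$ term is controlled by the $\WFb^{-1,m}(Pu)$ hypothesis together with boundedness of $\Psib^0$ on $H^{\pm1}$ (Lemma~\ref{lemma:psib-on-h1}) and the order-shifting Lemma~\ref{lemma:order-shifting}, and the remaining term is lower order. Concretely one pairs against $Au$ and uses $|\langle AEP u, Au\rangle| \le \|Pu\|_{H^{-1}}$-type bounds via Lemma~\ref{lemma:order-shifting} applied to $AEP \in \Diff^2\Psib^{2(m-1)-2} + \cdots$. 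The upshot is $\|Au\|_{H^1} \le C(\|GPu\|_{H^{-1}} + \|u\|_{H^{1}} + \text{(inductively controlled terms)})$, giving $\rho_0\notin\WFb^{1,m}u$. The "away from $r=0$ bicharacteristic invariance" is just Proposition~\ref{prop:propsing} transported to the $\bo$-picture, which coincides with the ordinary picture for $r>0$.

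The main obstacle, and where the hypothesis $|\charge|<1/2$ enters, is controlling the potential contributions to $P$: the term $-i\charge r^{-2}\skewsigma$ is, by the Hardy inequality (Lemma~\ref{lemma:Hardy}, in the form $\|r^{-1}u\|\le 2\|\pd[r]u\|$), comparable in size to the genuine second-order terms, and it is not self-adjoint and has no sign. In the \emph{elliptic} region, however, this is manageable: because $G$ is microsupported where $\tau^2-\sigma^2-r^{-2}|\eta|^2$ is bounded below, the genuine second-order part of $P$ dominates, and the Hardy inequality lets one estimate $\|\charge r^{-2}\skewsigma\, Eu\|_{H^{-1}}$ — i.e. $\|\charge r^{-1}(r^{-1}Eu)\|$ — by $2|\charge|$ times an $H^1$-type norm of $Eu$, which, since $|\charge|<1/2$ makes $2|\charge|<1$, can be absorbed into the left-hand side after choosing the parametrix $E$ so that $r^{-2}Eu$ pairs favorably. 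The cross term $\charge r^{-1}D_t$ from the minimal coupling is strictly lower order in this accounting and is handled the same way with room to spare. (It is precisely this absorption that fails in the \emph{hyperbolic} region, which is why the authors revert to the first-order equation there; but for the elliptic lemma, the ellipticity of the leading symbol provides the needed slack.)
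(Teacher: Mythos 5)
Your overall skeleton (induction on the order, microlocalization near a point $\rho_0\notin\Sigmadot$, regularization, Hardy inequality for the singular potential, absorption using $\smallabs{\charge}<1/2$) matches the paper's, but the central mechanism you propose is not available, and your accounting of where $\smallabs{\charge}<1/2$ enters is wrong.

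First, the parametrix. You propose to find $E\in\Diff^{2}\Psib^{-2}$ with $EP=G+R$ microlocally on the $\bo$-elliptic set. No such construction exists: $P$ is not a $\bo$-operator, and the mixed spaces $\Diff^{k}\Psib^{s}$ form a filtered algebra but not one with a symbolic calculus in which the relevant ``full symbol'' $\tau^{2}-\xi_{r}^{2}-r^{-2}\smallabs{\eta}^{2}$ (with $\xi_{r}$ dual to $D_{r}$, not to $rD_{r}$) can be inverted; the would-be inverse is not a symbol on $\Tbstar M$, and uniformity as $r\to 0$ is exactly what fails. This is precisely why Vasy, and the paper following \cite[Section 4]{Va:08}, prove $\bo$-elliptic regularity by a quadratic-form argument instead of a parametrix: one pairs $PA_{\lambda}u$ with $A_{\lambda}u$ for an invariant regularized family $A_{\lambda}$, identifies the real part as $-\norm{(D_{t}+\charge/r)A_{\lambda}u}^{2}+\norm{\grad A_{\lambda}u}^{2}+m^{2}\norm{A_{\lambda}u}^{2}$ plus controllable errors (Lemma~\ref{lemma:quadratic}), and then uses the microsupport conditions $\hat{\sigma}^{2}+\smallabs{\hat{\eta}}^{2}\geq\epsilon^{2}$ and $r<\delta$ to show that $\norm{D_{t}A_{\lambda}u}^{2}$ is bounded by a small multiple of $\norm{\grad A_{\lambda}u}^{2}$, so that the gradient term dominates and yields the $H^{1}$ bound. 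Your parametrix step would have to be replaced by some such positive pairing; as written, the step ``pick $E$ inverting $P$ microlocally'' has no proof.

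Second, the potential terms. The anti-self-adjoint term $-i\charge r^{-2}\skewsigma$ does not need to be absorbed at all: being anti-self-adjoint, it drops out of $\Re\ang{PA_{\lambda}u,A_{\lambda}u}$ identically (this is the remark following Lemma~\ref{lemma:quadratic}). Moreover, if one did have to estimate it, Hardy gives $\abs{\ang{\charge r^{-2}v,v}}\leq\smallabs{\charge}\,\norm{r^{-1}v}^{2}\leq 4\smallabs{\charge}\,\norm{\grad v}^{2}$, so your constant $2\smallabs{\charge}$ is off by a factor of two and the absorption you describe would only work for $\smallabs{\charge}<1/4$. The actual source of the threshold is the minimal-coupling term you dismiss as handled ``with room to spare'': expanding $\norm{(D_{t}+\charge/r)A_{\lambda}u}^{2}$ produces $\charge^{2}\norm{r^{-1}A_{\lambda}u}^{2}\leq 4\charge^{2}\norm{\grad A_{\lambda}u}^{2}$, and the condition $4\charge^{2}<1$ is exactly $\smallabs{\charge}<1/2$. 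So the term you treat as the main enemy is harmless in the real part, and the term you treat as harmless is the one that saturates the constant.
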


Following the treatment in~\cite[Section 4]{Va:08}, we begin with a
lemma concerning the quadratic form associated to $P$.  (Cf.\
Lemma~4.2 of \cite{Va:08}.)

In what follows, we split $P$ as
\begin{equation*}
  P = P_{0} + \bR
\end{equation*}
with
\begin{equation*}
  P_{0} = - (\pd[0] + i\charge/r)^{2} + \sum \pd[j]^{2} - m^{2}- i \frac{\charge}{r^{2}}
  \begin{pmatrix}
    0 & \sigma_{r} \\ \sigma_{r} & 0 
  \end{pmatrix}.
\end{equation*}

\begin{lemma}
  \label{lemma:quadratic}
  Let $K\subset \Sbstar M$ be compact, $U\subset \Sbstar M$ open, $K
  \subset U$.  Let $A_{\lambda}$ be a bounded family of invariant
  elements in $\Psib^{s}$ with $\WFb'A_{\lambda} \subset K$ (in the
  sense of uniform wavefront set of families), and $A_{\lambda} \in
  \Psib^{s-1}$ for $\lambda \in (0,1)$.  Then there exist $G \in
  \Psib^{s-1/2}$, $\tG \in \Psib^{s}$, both microsupported in $U$,
  and $C_{0}$ so that for all $\epsilon > 0$, $\lambda \in (0,1)$, $u
  \in H^{1}$ with $\WFb^{1,s-1/2}u \cap U = \emptyset$, $\WFb^{-1,
    s}(Pu) \cap U = \emptyset$,
  \begin{align*}
    &\abs{\norm{(D_{t}+\charge/r)A_{\lambda}u}^{2} - \norm{\grad
    A_{\lambda}u}^{2} - m^{2}\norm{A_{\lambda}u}^{2} + \Re \langle \bR
      A_{\lambda}u, A_{\lambda}u \rangle } \\
    &\leq C_{0} \left( \epsilon \norm{A_{\lambda}u}_{H^{1}}^{2} +
      \norm{u}_{H^{1}}^{2} + \norm{Gu}_{H^{1}}^{2} +
      \epsilon^{-1}\norm{Pu}_{H^{-1}}^{2} + \epsilon^{-1}\norm{\tG Pu}_{H^{-1}}^{2}\right).
  \end{align*}
  The estimate is \emph{uniform} for bounded $\charge$ (which is not
  required to be small).  
\end{lemma}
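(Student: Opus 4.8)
The plan is to recognize the quantity inside the absolute value as $\smallabs{\Re\ang{PA_{\lambda}u, A_{\lambda}u}}$ and then commute $P$ past $A_{\lambda}$. For $v = A_{\lambda}u$, integration by parts gives the identity
\[
\norm{(D_{t}+\charge/r)v}^{2} - \norm{\grad v}^{2} - m^{2}\norm{v}^{2} = \Re\ang{P_{0}v, v},
\]
because the zeroth-order matrix term $-i\tfrac{\charge}{r^{2}}\skewsigma$ of $P_{0}$ is formally anti-self-adjoint and hence, interpreting $\tfrac{1}{r^{2}}v$ as $\tfrac{1}{r}\bigl(\tfrac{1}{r}v\bigr)$ and using the Hardy inequality \eqref{Hardyconstant} to place $\tfrac{1}{r}v$ in $L^{2}$, contributes only a purely imaginary number to $\ang{P_{0}v,v}$. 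Adding $\Re\ang{\bR v,v}$, the left-hand side equals $\smallabs{\Re\ang{PA_{\lambda}u, A_{\lambda}u}}$, a well-defined $H^{-1}$--$H^{1}$ pairing: for $\lambda\in(0,1)$ we have $A_{\lambda}\in\Psib^{s-1}$, which with $\WFb^{1,s-1/2}u\cap U=\emptyset$ and $\WFb'A_{\lambda}\subset K\subset U$ places $v=A_{\lambda}u$ in $H^{1}$ locally, while $P$ maps $H^{1}$ to $H^{-1}$ once the $\charge/r$, $\charge/r^{2}$ and $\bR$ terms are controlled via Hardy. All estimates below are uniform in $\lambda$ using only that $A_{\lambda}$ is \emph{bounded} in $\Psib^{s}$; the stronger membership $A_{\lambda}\in\Psib^{s-1}$ is invoked solely to guarantee finiteness for fixed $\lambda$.

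Next write $PA_{\lambda}u = A_{\lambda}Pu + [P,A_{\lambda}]u$. For the source term, since $\WFb^{-1,s}(Pu)\cap U=\emptyset$ and $\WFb'A_{\lambda}\subset K\subset U$, fix $\tG\in\Psib^{s}$ elliptic on $\overline{K}$ and microsupported in $U$; a microlocal elliptic parametrix argument as in Lemma~\ref{lemma:elliptic-estimation} (in its $H^{-1}$ form), together with the uniform boundedness of $\Psib^{0}$ on $H^{-1}$ from Lemma~\ref{lemma:psib-on-h1}, gives $\norm{A_{\lambda}Pu}_{H^{-1}}\leq C(\norm{\tG Pu}_{H^{-1}}+\norm{Pu}_{H^{-1}})$ with $C$ independent of $\lambda$. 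Cauchy--Schwarz and $ab\leq \tfrac{\epsilon}{4}a^{2}+\epsilon^{-1}b^{2}$ then bound $\smallabs{\ang{A_{\lambda}Pu, A_{\lambda}u}}$ by $\tfrac{\epsilon}{4}\norm{A_{\lambda}u}_{H^{1}}^{2}+C\epsilon^{-1}(\norm{\tG Pu}_{H^{-1}}^{2}+\norm{Pu}_{H^{-1}}^{2})$, which is of the required form.

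The main step is estimating $\Re\ang{[P,A_{\lambda}]u, A_{\lambda}u}$. By Lemma~\ref{lemma:commutator1}, valid because $A_{\lambda}$ is invariant with real scalar principal symbol, $[P,A_{\lambda}] = B_{0}^{\lambda}\tfrac{1}{r^{2}}\Lap_{\theta} + B_{1}^{\lambda}$ with $B_{0}^{\lambda}$ bounded in $\Psib^{s-1}$ and $B_{1}^{\lambda}$ bounded in $\Diff^{2}\!\Psib^{s-1}+\Diff^{1}\!\Psib^{s}+\Psib^{s+1}$, both microsupported in $K$. To $\ang{B_{1}^{\lambda}u, A_{\lambda}u}$ I would apply Lemma~\ref{lemma:order-shifting} (its $H^{1}\times H^{1}$ form), splitting the two orders as $s+\tfrac12$ and $\tfrac12$ so that the wavefront hypotheses hold — $\WFb'B_{1}^{\lambda}\subset K$, $\WFb^{1,s-1/2}u\cap U=\emptyset$, and the $\WFb^{1,\cdot}$ of $A_{\lambda}u$ is controlled because $A_{\lambda}$ has order $s$ — producing $A'\in\Psib^{s-1/2}$ and $B'\in\Psib^{-1/2}$, microsupported in $K$, with $\smallabs{\ang{B_{1}^{\lambda}u, A_{\lambda}u}}\leq C\bigl(\norm{A'u}_{H^{1}}\norm{B'A_{\lambda}u}_{H^{1}}+\norm{u}_{H^{1}}\norm{A_{\lambda}u}_{H^{1}}\bigr)$. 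Since $A'$ and $B'A_{\lambda}$ lie in $\Psib^{s-1/2}$ and are microsupported in $K\subset U$, Lemma~\ref{lemma:elliptic-estimation} bounds $\norm{A'u}_{H^{1}}$ and $\norm{B'A_{\lambda}u}_{H^{1}}$ by $C(\norm{Gu}_{H^{1}}+\norm{u}_{H^{1}})$ for a fixed $G\in\Psib^{s-1/2}$ elliptic on $\overline{K}$ and microsupported in $U$, while Young's inequality turns $\norm{u}_{H^{1}}\norm{A_{\lambda}u}_{H^{1}}$ into $\tfrac{\epsilon}{4}\norm{A_{\lambda}u}_{H^{1}}^{2}$ plus a lower-order term. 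For the $B_{0}^{\lambda}\tfrac{1}{r^{2}}\Lap_{\theta}$ term, invariant operators commute with $\Lap_{\theta}$ (Lemma~\ref{lemma:invariantcomm}), so $B_{0}^{\lambda}$ arises only from commutators with powers of $r^{-1}$ and is genuinely of order $s-1$; writing $\tfrac{1}{r^{2}}\Lap_{\theta}=(\tfrac{1}{r}\grad_{\theta})^{*}(\tfrac{1}{r}\grad_{\theta})$, a product of operators bounded $H^{1}\to L^{2}$ by the definition of the $H^{1}$ norm, distributing the two factors $\tfrac{1}{r}\grad_{\theta}$ across the pairing, and splitting the $\Psib$-order-$(2s-1)$ product $(B_{0}^{\lambda})^{*}A_{\lambda}$ symmetrically into two $\Psib^{s-1/2}$ factors applied to $u$, Lemma~\ref{lemma:elliptic-estimation} again bounds this contribution by $C(\norm{Gu}_{H^{1}}^{2}+\norm{u}_{H^{1}}^{2})$. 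Assembling these pieces, and noting that $\charge$ enters only through bounded coefficients so that no smallness of $\charge$ is used, yields the estimate.

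I expect the main obstacle to be the order bookkeeping in the last step. Unlike in Vasy's Dirichlet-form estimate \cite[Section~4]{Va:08}, whose background metric carries no singular potential, the commutator $[P,A_{\lambda}]$ here inherits the genuinely singular factors $\charge/r$ and $\charge/r^{2}$ from the minimally coupled, reduced Klein--Gordon operator; these must be routed so that they always pair against $\grad v$ (or $\tfrac{1}{r}v$) rather than an undifferentiated $v$, and controlled by the Hardy inequality at precisely the scaling where $1/r$ counts as a derivative. Reconciling this with the symmetric splitting of the pseudodifferential factors — so that no half-derivative is lost and the half-integer regularity target $s-\tfrac12$ is exactly what is needed — is the delicate point; the large anti-self-adjoint term $-i\tfrac{\charge}{r^{2}}\skewsigma$, harmless in the real part in the first step, reappears inside $[P,A_{\lambda}]$ and is the most dangerous of these contributions, handled via Hardy together with the observation (the Remark after Lemma~\ref{lemma:invariantcomm}) that the non-commutation of $\sigma_{r}$ with $A_{\lambda}$ is microsupported off the characteristic set and hence absorbed by the elliptic estimate.
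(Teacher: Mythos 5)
Your proposal is correct and follows essentially the same route as the paper's proof: identify the left-hand side with $\smallabs{\Re\ang{PA_{\lambda}u,A_{\lambda}u}}$ (the anti-self-adjoint matrix term dropping out of the real part), split off the source term via Cauchy--Schwarz and microlocal elliptic regularity for $\tG$, and control the commutator term using Lemma~\ref{lemma:commutator1} together with Lemmas~\ref{lemma:order-shifting} and~\ref{lemma:elliptic-estimation}. The extra detail you supply on the symmetric half-order splitting and the Hardy-inequality bookkeeping for the $1/r$ and $1/r^{2}$ factors is consistent with, and slightly more explicit than, the paper's treatment.
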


\begin{remark}\mbox{}
  \begin{itemize}
  \item The LHS of the inequality is given by the absolute value of
    the $\Re \langle PA_{\lambda}u, A_{\lambda}u\rangle$; the
    non-scalar term in $P$ is anti-self-adjoint, hence does not
    contribute.
  \item If $A_{\lambda}$ commuted with $P$ the $G$ term would not
    appear; as it is, this term is lower order than $A_{\lambda}$
    since it arises as a commutator.
  \end{itemize}
\end{remark}

\begin{proof}
  Fix $G, \tG$ of the appropriate order, microsupported in $U$, so
  that $\sigmab(G), \sigmab(\tG) \equiv 1$ on $K$.

  The pairing
  \begin{equation*}
    \Re \langle PA_{\lambda}u, A_{\lambda}u\rangle
  \end{equation*}
  is finite for all $\lambda > 0$ by our wavefront set assumption,
  which implies that $PA_{\lambda}u \in H^{-1}$ and $A_{\lambda}u \in
  H^{1}$.  First write
  \begin{equation*}
        \abs{\Re \ang{PA_{\lambda}u, A_{\lambda}u}} \leq
                                                  \abs{\ang{[P,A_{\lambda}]u,
                                                  A_{\lambda}u}} +
                                                  \abs{\ang{A_{\lambda}Pu,
                                                  A_{\lambda}u}} .
  \end{equation*}

  We first estimate the term
  \begin{equation*}
    \abs{\ang{A_{\lambda} Pu, A_{\lambda}u}}.
  \end{equation*}
  Indeed, we observe that
  \begin{equation*}
    \abs{\ang{A_{\lambda} Pu, A_{\lambda}u}} \leq
    \norm{A_{\lambda}Pu}_{H^{-1}}\norm{A_{\lambda}u}_{H^{1}} \leq
    \epsilon\norm{A_{\lambda}u}_{H^{1}}^{2} + \epsilon^{-1}\norm{A_{\lambda}Pu}_{H^{-1}}^{2}.
  \end{equation*}
  Elliptic regularity for $\tG$ then shows that
  \begin{equation*}
    \abs{\ang{A_{\lambda} Pu, A_{\lambda}u}} \leq \epsilon
    \norm{A_{\lambda}u}_{H^{1}}^{2} + C\epsilon^{-1}\left(
      \norm{Pu}_{H^{-1}}^{2} + \norm{\tG Pu}_{H^{-1}}^{2}\right).
  \end{equation*}

  We now turn our attention to the commutator term.  Indeed,
  Lemma~\ref{lemma:commutator1} allows us to write
  \begin{equation*}
    \ang{[P, A_{\lambda}] u, A_{\lambda}u} =
    \ang{\frac{1}{r^{2}}\Lap_{\theta}B_{0}u, A_{\lambda}u} +
    \ang{B_{1}u, A_{\lambda}u},
  \end{equation*}
  where $B_{0}\in \Psib^{s-1}$ and $B_{1} \in
    \DiffPsi^{s-1}+\operatorname{Diff}^1\!\Psi_b^s+\Psi_b^{s+1},$ both
    satisfying uniform (in $\lambda$) estimates in these spaces.
  
  Lemmas~\ref{lemma:order-shifting}
  and~\ref{lemma:elliptic-estimation} show that we may bound
  \begin{equation*}
    \abs{\ang{[P,A_{\lambda}]u, A_{\lambda}u}} \lesssim
    \norm{u}_{H^{1}}^{2} + \norm{Gu}_{H^{1}}^{2},
  \end{equation*}
  finishing the proof.
\end{proof}

\begin{proof}[Proof of Lemma~\ref{lemma:ellipticreg}]
  (Cf.\ the proof of~\cite[Proposition 4.6]{Va:08}.)  We aim to show
  that if $\WFb' A \cap \Sigmadot = \emptyset$ and
  $\WFb^{-1,m}(Pu)\cap \WFb'A =\emptyset$, then $Au \in H^{1}$.  We in
  fact show this iteratively, assuming by induction that
  $\WFb^{1,s-1/2}u$ is disjoint from a(n arbitrarily small
  neighborhood of) $\WFb'A$ and then showing $Au \in H^{1}$.  To pass
  to $s=\infty$, one must guarantee that the supports of the operators
  in each iteration do not shrink too quickly, but this can be
  guaranteed as in the end of the proof of~\cite[Proposition 6.2]{Va:08}.

  We will use the notation
  $$
\hat\sigma=\frac{\sigma}{\smallabs{\tau}},\quad \hat\eta=\frac{\eta}{\smallabs{\tau}}
    $$
    in discussing symbol constructions below.

  Since $\WFb'A \cap \Sigmadot = \emptyset$, without loss of
  generality (since the lemma is standard over $M^{\circ}$),
  $\hat{\sigma}^{2}+ \abs{\hat{\eta}}^{2}\geq \epsilon^{2} > 0$ on $\WFb'
  A$; moreover, by a partition of unity in $x$ (again using elliptic
  regularity over $M^{\circ}$), we may take $r < \delta$ over
  $\WFb'A$, where we may specify $\delta$ independently from
  $\epsilon$ above.  Now we let\footnote{We assume our quantization is
    arranged so that it yields properly supported operators.}
  \begin{equation*}
    A_{\lambda} \equiv \Op_b \left( (1 + \lambda (\tau^{2} + \sigma^{2}+
      \abs{\eta}^{2}))^{-1}\right) A,
  \end{equation*}
  so that $A_{\lambda}$ is uniformly bounded in $\Psib^{s}$ and
  converges to $A$ in the topology of $\Psib^{s+0}$, while for each
  $\lambda > 0$, $A_{\lambda} \in \Psib^{s-2}$.  We may apply
  Lemma~\ref{lemma:quadratic} to such an $A$, so that for all
  $\epsilon ' > 0$,
  \begin{align}
    \label{qform2}
   & - \norm{(D_{t}+\charge/r)A_{\lambda}u}^{2} + \norm{\grad A_{\lambda}u}^{2}
    + m^{2}\norm{A_{\lambda}u}^{2} - \abs{\ang{\bR A_{\lambda}u,
     A_{\lambda}u}} \\
    &\leq C_{0} \left( \epsilon'\norm{A_{\lambda}u}_{H^{1}}^{2} +
      \norm{u}_{H_{1}}^{2} + \norm{Gu}_{H_{1}}^{2} +
      (\epsilon')^{-1}\norm{Pu}_{H^{-1}}^{2} + (\epsilon')^{-1}
      \norm{\tG Pu}_{H^{-1}}^{2}\right).
  \end{align}
  Since $\tau^{2} < \epsilon^{-2} (\sigma^{2} + \abs{\eta}^{2})$ and
  $r < \delta$ on $\WFb'A$, we estimate
  \begin{align*}
    \norm{D_{t}A_{\lambda}u}^{2} &\leq
                                   \ang{\epsilon^{-2}\Op(\sigma^{2} +
                                   \abs{\eta}^{2}) A_{\lambda}u ,
                                   A_{\lambda}u} +
                                   \norm{Gu}_{H^{1}}^{2} \\
    & = \epsilon^{-2}\left( \norm{(rD_{r})A_{\lambda}u}^{2} +
      \norm{\grad_{\theta}A_{\lambda}u}^{2}\right) +
      \norm{Gu}_{H^{1}}^{2} \\
    &\leq \delta^{2} \epsilon^{-2}\norm{\grad A_{\lambda}u}^{2} +
      \norm{Gu}_{H^{1}}^{2} \\
    &\leq C \delta^{2} \epsilon^{-2}\norm{A_{\lambda}u}_{H^{1}}^{2} + \norm{Gu}_{H^{1}}^{2}.
  \end{align*}
  Here again $G \in \Psib^{s-1/2}$ is an error term (which we allow to change from line to line as needed); we
  use it to estimate terms of the form $\norm{Bu}_{L^{2}}^{2}$ with $B
  \in \Psib^{s+1/2}$.

  We also recall from~\eqref{Hardyconstant} that
  \begin{equation*}
    \norm{r^{-1}A_{\lambda}u}^{2} \leq 4
    \norm{A_{\lambda}u}_{H^{1}}^{2};
  \end{equation*}
  thus for any $\ep'>0,$
  \begin{equation*}
    \norm{(D_{t} + V)A_{\lambda}u}^{2}\leq C \delta^{2} \epsilon^{-2}
    \norm{A_{\lambda}u}_{H^{1}}^{2} + (4 + \epsilon ') \charge^{2} \norm{\grad
      A_{\lambda}u}^{2} + \norm{Gu}_{H^{1}}^{2}.
  \end{equation*}
  We also use repeatedly the fact that $\norm{A_{\lambda}u}\leq
  C\norm{Gu}_{H^{1}}$ together with Cauchy--Schwarz to estimate
  \begin{equation*}
    \abs{\ang{\bR A_{\lambda}u, A_{\lambda}u}} \leq \epsilon'
    \norm{A_{\lambda}u}_{H^{1}}^{2} + C\norm{Gu}_{H^{1}}^{2}.
  \end{equation*}
  (The constant on the right side depends on both $\charge$ and
  $\epsilon'$.)

  Adding $\norm{D_{t}A_{\lambda}u}^{2} + \norm{(D_{t}+V)A_{\lambda}u}^{2}$ to equation~\eqref{qform2} now yields
  \begin{align}
    \label{qform3}
    \norm{D_{t}A_{\lambda}u}^{2} + \norm{\grad A_{\lambda}u}^{2} &\leq
                                                                   (C\delta^{2}\epsilon^{-2}
                                                                   +
                                                                   2\epsilon')\norm{A_{\lambda}u}^{2}
                                                                   +
                                                                   (4 +\epsilon')
                                                                   \charge^{2}
                                                                   \norm{\grad
                                                                   A_{\lambda}u}^{2}
    \\
    &\quad +
                                                                   C_{0}\left(\norm{u}_{H^{1}}^{2}
                                                                   +
                                                                   \norm{Gu}_{H^{1}}^{2}
                                                                   +
                                                                   (\epsilon')^{-1}
                                                                   \norm{Pu}_{H^{-1}}^{2}
                                                                   +
                                                                   (\epsilon')^{-1}\norm{\tG
      Pu}_{H^{-1}}^{2}\right). \notag
  \end{align}
  Assuming now that $\abs{\charge}< 1/2$, taking $\epsilon'$ and
  $\delta$ sufficiently small (and dropping $\epsilon'$-dependence of
  the constants on the right side), we absorb the
  $\norm{\grad A_{\lambda}u}^{2}$ and
  $\norm{A_{\lambda}u}_{H^{1}}^{2}$ terms on the right into the left
  side.  (For the latter term, we recall that up to
  $\norm{A_{\lambda} u}_{L^{2}}^{2}$, which is controlled by
  $\norm{Gu}_{H^{1}}^{2}$, $\norm{\grad A_{\lambda}u}^{2}$ is
  comparable to the squared $H^{1}$ norm of $A_{\lambda}u.$)

  We thus obtain
  \begin{equation}\label{finalelliptic}
    \norm{A_{\lambda}u}_{H^{1}}^{2} \leq C \left( \norm{u}_{H^{1}}^{2}
      + \norm{Gu}_{H^{1}}^{2} + \norm{Pu}_{H^{-1}}^{2}+ \norm{\tG Pu}_{H^{-1}}^{2}\right).
  \end{equation}
  The right side is uniformly bounded as $\lambda \downarrow 0$ by our
  inductive assumption.  Now taking $\lambda \to 0$ and employing a
  standard weak-convergence argument (see, e.g.,~\cite[Lemma 3.7]{Va:08}) shows that
  $Au \in H^{1}$.  This concludes the proof of Lemma~\ref{lemma:ellipticreg}.
\end{proof}

We now record two corollaries of the previous lemma; the first is
elliptic regularity for $\dop$:
\begin{corollary}
  \label{lemma:first-order}
  If $\abs{\charge} < 1/2$, then for all $u \in H^{1}$, $\WFb^{1,m}u
  \subset \WFb^{m}(\dop u)\cup \Sigmadot$.

  More precisely, if $A\in \Psib^{m}$ is properly supported and
  microsupported near $\rho_{0} \notin \Sigmadot$, then there are
  $G\in \Psib^{m-1}$ and $\tG \in \Psib^{m}$ also microsupported in
  $\Sigmadot^c$ so that
  \begin{equation*}
    \norm{Au}_{H^{1}} \leq C \left( \norm{u}_{H^{1}} +
      \norm{Gu}_{H^{1}} +  \norm{\tG \dop u}_{L^{2}}\right).
  \end{equation*}
\end{corollary}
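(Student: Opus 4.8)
The plan is to deduce this from the second‑order elliptic estimate already in hand, Lemma~\ref{lemma:ellipticreg}, by writing $P$ as a first‑order operator applied to $\dop$. By \eqref{eq:defin-of-first-order-op} we have $\dop = \beta(i\dirac_\pot - m)$, and $\beta^2 = (\gamma^0)^2 = \Id$, so $(i\dirac_\pot - m)u = \beta\dop u$; hence by the definition \eqref{Pdef},
\begin{equation*}
  Pu = (i\dirac_\pot + m)\beta\,\dop u =: N\dop u ,
\end{equation*}
where $N := (i\dirac_\pot + m)\beta$ is a first‑order matrix differential operator whose only singular coefficient is the $\charge/r$ term inherited from $A_0$; explicitly $N = i\pd_t - i\alpha^j\pd_j - \charge/r + (\text{smooth matrix multipliers})$. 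By the Hardy inequality \eqref{Hardyconstant} the map $r^{-1}\colon H^1 \to L^2$ is bounded, so both $N$ and $\dop$ map $H^1 \to L^2$ and, dually, $L^2 \to H^{-1}$; in particular $\norm{Pu}_{H^{-1}} = \norm{N\dop u}_{H^{-1}} \leq C\norm{\dop u}_{L^2} \leq C\norm{u}_{H^1}$, so the global error term in Lemma~\ref{lemma:ellipticreg} is already controlled by $\norm{u}_{H^1}$.

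Next I would record that $N$ is ``microlocal'' in the $\bo$‑calculus: it is a finite sum of terms $a\,r^{-j}D_r^k B$ with $B$ a constant matrix (so $B \in \Psib^0$) and $j+k\leq 1$, so Lemma~\ref{lemma:differentialcommutator} shows that for any $Q \in \Psib^\ell$ one has $NQ = Q'N + E$ with $\WFb'Q' \cup \WFb'E \subset \WFb'Q$ and $E$ of order one lower than $NQ$ on the $H^1\to L^2\to H^{-1}$ scale (the bookkeeping is exactly as in the proof of Lemma~\ref{lemma:psib-on-h1}). Consequently, if $\tG'\in\Psib^m$ is elliptic on a neighborhood of a set $W$ and $\tG'\dop u \in L^2$, then for any $\tG \in \Psib^m$ with $\WFb'\tG \subset W$ one has $\tG N\dop u \in H^{-1}$ with the quantitative bound
\begin{equation*}
  \norm{\tG Pu}_{H^{-1}} = \norm{\tG N\dop u}_{H^{-1}} \leq C\bigl(\norm{\tG'\dop u}_{L^2} + \norm{u}_{H^1} + \norm{G'u}_{H^1}\bigr),
\end{equation*}
for a suitable $G' \in \Psib^{m-1}$ microsupported in $W$: take a microlocal parametrix $E'\in\Psib^{-m}$ with $E'\tG' = \Id + R$, $R$ smoothing on $W$, write $\dop u = E'(\tG'\dop u) - R\dop u$, commute $E'$ and $\tG$ through $N$ modulo lower‑order terms, and use the mapping properties above together with Lemma~\ref{lemma:elliptic-estimation}.

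Finally I would run the argument of Lemma~\ref{lemma:ellipticreg}. Given $A\in\Psib^m$ properly supported with $\WFb'A$ a small neighborhood of $\rho_0\notin\Sigmadot$, that lemma (in the quantitative form \eqref{finalelliptic}, with the error reduced to order $m-1$ by one further application of the estimate to the error operator) yields $G_0\in\Psib^{m-1}$ and $\tG_0\in\Psib^m$, both microsupported near $\WFb'A$ and hence in $\Sigmadot^c$, with $\norm{Au}_{H^1}\leq C(\norm{u}_{H^1}+\norm{G_0u}_{H^1}+\norm{Pu}_{H^{-1}}+\norm{\tG_0 Pu}_{H^{-1}})$. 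Substituting $Pu = N\dop u$, absorbing $\norm{Pu}_{H^{-1}}$ into $\norm{u}_{H^1}$ as in the first paragraph, and estimating $\norm{\tG_0 N\dop u}_{H^{-1}}$ by the second paragraph with $W$ a neighborhood of $\WFb'\tG_0$ (producing the desired $\tG\in\Psib^m$ and an extra $\Psib^{m-1}$ error that we fold into $G$ along with $G_0$), gives exactly the stated estimate; the qualitative containment $\WFb^{1,m}u\subset\WFb^m(\dop u)\cup\Sigmadot$ then follows by the usual inductive regularization in $m$ (the base case $m\leq 1$ being harmless since $\Psib^{\leq 0}$ is bounded on $H^1$ by Lemma~\ref{lemma:psib-on-h1}), exactly as in the passage to $s=\infty$ in the proof of Lemma~\ref{lemma:ellipticreg}. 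The only delicate point is the order‑bookkeeping in the middle paragraph: one must check that the genuinely singular $\charge/r$ coefficient in $N$ does not spoil microlocality and that $\Psib^m$‑regularity of $\dop u$ relative to $L^2$ transfers to $\Psib^m$‑regularity of $Pu$ relative to $H^{-1}$ — all routine given Lemmas~\ref{lemma:differentialcommutator}, \ref{lemma:psib-on-h1} and \ref{lemma:elliptic-estimation} and the Hardy inequality, but requiring care.
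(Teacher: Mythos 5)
Your proposal is correct and follows essentially the same route as the paper's own proof: you factor $Pu = N\dop u$ with $N=(i\dirac_\pot+m)\gamma^0$ (the paper's $\tL$), feed this into the quantitative elliptic estimate \eqref{finalelliptic}, control $\norm{\tG Pu}_{H^{-1}}$ by commuting $\tG$ through $N$ and using the $L^{2}\to H^{-1}$ boundedness of $N$ (via Hardy and Lemma~\ref{lemma:differentialcommutator}), and iterate once to lower the order of the error operator from $m-1/2$ to $m-1$. This matches the paper's argument step for step, so no further comment is needed.
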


\begin{proof}
  The final estimate \eqref{finalelliptic} in the proof of
  Lemma~\ref{lemma:ellipticreg} shows that we may bound
  \begin{equation*}
    \norm{Au}_{H^{1}} \leq C \left( \norm{u}_{H^{1}} +
      \norm{G_{1}u}_{H^{1}} + \norm{Pu}_{H^{-1}} + \norm{\tG Pu}_{H^{-1}}\right),
  \end{equation*}
  for $G_{1} \in \Psib^{m-1/2}$.  We first estimate $\norm{Pu}_{H^{-1}}$ and $\norm{\tG
    Pu}_{H^{-1}}$ in terms of $\norm{\dop u}_{L^{2}}$.

  We recall that we may write $P = \tL \dop$, where
  \begin{equation*}
    \tL = (i \dirac_\pot +m)\gamma^{0},
  \end{equation*}
  which maps $L^{2}\to H^{-1}$ continuously.  We may therefore bound
  \begin{equation*}
    \norm{Pu}_{H^{-1}} \leq C \norm{\dop u}_{L^{2}}.
  \end{equation*}
  Turning to $\norm{\tG Pu}_{H^{-1}}$, we write
  \begin{equation*}
    \tG Pu = \tG \tL \dop u =\tL \tG \dop u + [\tG, \tL] \dop u.
  \end{equation*}
  As $\norm{\tL \tG \dop u}_{H^{-1}}\leq C \norm{\tG \dop u}_{L^{2}}$, we turn
  our attention to $[\tG , \tL]$.  As $\tL \in r^{-1}\Psib^{1}$,
  Lemma~\ref{lemma:differentialcommutator} and basic properties of the
  $\bo$-calculus show that $[\tG, \tL] \in r^{-1}\Psib^{m}$.  Elliptic
  regularity of (a slightly enlarged) $\tG '$ then shows that
  \begin{equation*}
    \norm{[\tG, \tL ]\dop u}_{H^{-1}} \leq C \left( \norm{\dop u}_{L^{2}} +
      \norm{\tG '\dop u}_{L^{2}}\right).
  \end{equation*}
    
  We now repeat the whole argument up to this point with $G_{1}$
  replacing $A$; this allows us to replace (at the cost of slightly
  enlarging the microsupports) the operator $G_{1} \in \Psib^{m-1/2}$
  with $G \in \Psib^{m-1}$.
\end{proof}

\begin{remark}
  By iteration, we may replace $G \in \Psib^{m-1}$ in the statement of
  the above corollary by an operator of any order, though we do not
  need this stronger statement below.
\end{remark}

The second corollary has the same proof as
Lemma~\ref{lemma:ellipticreg} without an estimate on
$\norm{D_{t}A_{\lambda}u}$:
\begin{corollary}
  \label{lemma:control-of-H1-by-dt}
  If $\abs{\charge}<1/2$ and $A\in \Psib^{m}$ is invariant and
  properly supported, then for $G \in \Psib^{m-1}$ and $\tG \in
  \Psib^{m}$ with $\WFb'(A) \subset \liptic G \cap \liptic \tG$, we have
  \begin{equation*}
    \norm{Au}_{H^{1}} \leq C \left( \norm{D_{t}Au} + \norm{Gu}_{H^{1}} +
      \norm{\tG \dop u}_{H^{1}} + \norm{u}_{H^{1}}\right).
  \end{equation*} 
\end{corollary}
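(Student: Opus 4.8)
The plan is to reproduce the proof of Lemma~\ref{lemma:ellipticreg} almost verbatim, the single structural change being that, since $A$ is now an arbitrary invariant operator rather than one microsupported off $\Sigmadot$, we can no longer make $\norm{D_t A_\lambda u}$ small and absorb it; instead I would carry $\norm{D_t A_\lambda u}^2$ through to the right-hand side, where it is one of the terms allowed by the statement. First I would introduce the regularization $A_\lambda$ used in that proof (uniformly bounded in $\Psib^m$, of order $m-2$ for $\lambda>0$, converging to $A$ in $\Psib^{m+0}$) and, since $A$ is invariant, apply Lemma~\ref{lemma:quadratic} with $s=m$, the error operators $G\in\Psib^{m-1/2}$, $\tG\in\Psib^m$ being taken microsupported in a small neighborhood of $\WFb'A\subset\liptic G\cap\liptic\tG$. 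Rearranging the resulting inequality and discarding the favorable term $-m^2\norm{A_\lambda u}^2$ gives
\begin{equation*}
  \norm{\grad A_\lambda u}^2 \leq \norm{(D_t+\charge/r)A_\lambda u}^2 + \abs{\ang{\bR A_\lambda u, A_\lambda u}} + C_0\bigl(\epsilon\norm{A_\lambda u}_{H^1}^2 + \norm{u}_{H^1}^2 + \norm{Gu}_{H^1}^2 + \epsilon^{-1}\norm{Pu}_{H^{-1}}^2 + \epsilon^{-1}\norm{\tG Pu}_{H^{-1}}^2\bigr).
\end{equation*}

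The hard part --- really the only non-routine step --- is then controlling $\norm{(D_t+\charge/r)A_\lambda u}$ in a way that lets the $\norm{\grad A_\lambda u}^2$ it produces be absorbed on the left. By the triangle inequality and the Hardy inequality~\eqref{Hardyconstant},
\begin{equation*}
  \norm{(D_t+\charge/r)A_\lambda u} \leq \norm{D_t A_\lambda u} + \abs{\charge}\,\norm{r^{-1}A_\lambda u} \leq \norm{D_t A_\lambda u} + 2\abs{\charge}\,\norm{\grad A_\lambda u}.
\end{equation*}
Writing $g=\norm{\grad A_\lambda u}$, $d=\norm{D_t A_\lambda u}$, $c=2\abs{\charge}$, and $R$ for the remaining nonnegative terms, the preceding display becomes $g^2 \leq (d+cg)^2 + R$, i.e.\ $(1-c^2)g^2 - 2cd\,g - (d^2+R)\leq 0$; since $c<1$ by hypothesis the leading coefficient is positive, and solving the quadratic gives $g \leq (1-c)^{-1}(d+\sqrt{R})$, hence
\begin{equation*}
  \norm{\grad A_\lambda u}^2 \leq \frac{2}{(1-2\abs{\charge})^2}\Bigl(\norm{D_t A_\lambda u}^2 + \abs{\ang{\bR A_\lambda u, A_\lambda u}} + C_0\epsilon\norm{A_\lambda u}_{H^1}^2 + (\text{lower-order errors})\Bigr).
\end{equation*}
This is exactly where $\abs{\charge}<1/2$ enters, and it plays the role of the absorption step in~\eqref{qform3}.

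The rest is bookkeeping identical to Lemma~\ref{lemma:ellipticreg} and Corollary~\ref{lemma:first-order}. I would estimate $\abs{\ang{\bR A_\lambda u, A_\lambda u}}$ using $\bR = \charge\mathbf{W}_0/r + \mathbf{W}_1^\alpha\pd[\alpha] + \mathbf{W}_2$ (Hardy on the $1/r$ piece, Cauchy--Schwarz with a small parameter on the first-order piece, whose $D_t$ part simply feeds back into the allowed $\norm{D_t A_\lambda u}^2$), then absorb the $\epsilon\norm{A_\lambda u}_{H^1}^2$ and the residual $\norm{\grad A_\lambda u}^2$ into the left side for $\epsilon$ small, using that $\norm{\grad A_\lambda u}^2$ is comparable to $\norm{A_\lambda u}_{H^1}^2$ modulo $\norm{A_\lambda u}_{L^2}^2$ and that the latter is controlled by $\norm{Gu}_{H^1}^2+\norm{u}_{H^1}^2$ via ellipticity of $G$ on $\WFb'A$ (handled by induction on the $\bo$-order, exactly as in Lemma~\ref{lemma:ellipticreg}). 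Next I would convert $\norm{Pu}_{H^{-1}}$ and $\norm{\tG Pu}_{H^{-1}}$ into $\norm{\dop u}$ and $\norm{\tG'\dop u}$ terms via $P=\tL\dop$ with $\tL\colon H^1\to L^2$ and commutation of $\tG$ through $\tL$, precisely as in the proof of Corollary~\ref{lemma:first-order}, so that after harmlessly enlarging $\tG$ they are dominated by $\norm{\tG\dop u}_{H^1}+\norm{u}_{H^1}$. Finally, since $\norm{A_\lambda u}_{H^1}^2 \lesssim \norm{D_t A_\lambda u}^2 + \norm{\grad A_\lambda u}^2 + \norm{A_\lambda u}_{L^2}^2$, combining everything yields a bound for $\norm{A_\lambda u}_{H^1}^2$ by the right-hand side of the claim that is uniform in $\lambda$, and passing to the weak limit $\lambda\downarrow 0$ (cf.\ \cite[Lemma~3.7]{Va:08}) gives $Au\in H^1$ with the stated estimate.
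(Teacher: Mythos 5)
Your proposal is correct and is essentially the paper's own argument: the paper proves this corollary by running the proof of Lemma~\ref{lemma:ellipticreg} verbatim, simply omitting the elliptic estimate on $\norm{D_{t}A_{\lambda}u}$ and carrying that term to the right-hand side, with the $P\mapsto\dop$ conversion borrowed from Corollary~\ref{lemma:first-order} exactly as you describe. Your quadratic-inequality absorption of the Hardy term is a cosmetic variant of the paper's absorption step in~\eqref{qform3}, using $\abs{\charge}<1/2$ in the same way.
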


\subsubsection{Proof of Theorems~\ref{thm:bpropagation} and~\ref{thm:b-module-reg}}
\label{sec:hyperbolic-diffractive}

We now turn our attention to the proof of the $\bo$-propagation
theorems.
We first record a consequence of the elliptic estimates of the
previous section:
\begin{lemma}
  \label{lemma:H1toL2}
  Suppose $u\in H^{1}$, $\dop u =0$.  Then
  \begin{equation*}
    \left( \WFb^{1,m}u\right)^{c} = \left\{ \rho \in \Tbstar M \colon
      \text{ there exists } A \in \Psib^{m+1} , \text{ elliptic at
      }\rho, Au \in L^{2}\right\}.
  \end{equation*}
  (Cf.\ Lemma 6.1 of \cite{Va:08}.)

  More precisely, if $u \in H^{1}$ and $\rho_{0} \notin
  \WFb^{m+1}(\dop u)$,
  then
  \begin{equation*}
    \rho_{0} \in \WFb^{1,m}u \text{ if and only if }\rho_{0} \in \WFb^{m+1}u.
  \end{equation*}
\end{lemma}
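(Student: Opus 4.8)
The plan is to prove the two set inclusions $\WFb^{m+1}u\subseteq\WFb^{1,m}u$ (unconditionally) and, at every $\rho_{0}\notin\WFb^{m+1}(\dop u)$, $\WFb^{1,m}u\subseteq\WFb^{m+1}u$; the displayed equality of complements is then the case $\dop u=0$, where $\WFb^{m+1}(\dop u)=\emptyset$. As throughout this section, we work with functions supported in a fixed bounded region, so that multiplication by $r$ is a bounded operation. The first inclusion -- that is, $\rho_{0}\notin\WFb^{1,m}u\Rightarrow\rho_{0}\notin\WFb^{m+1}u$ -- makes no use of the equation: if $B\in\Psib^{m}$ is elliptic at $\rho_{0}$ with $Bu\in H^{1}$, then $D_{t}(Bu)\in L^{2}_{g}$, and moreover $(rD_{r})(Bu)=r\,D_{r}(Bu)\in L^{2}_{g}$ and $D_{\theta}(Bu)=r\bigl(r^{-1}D_{\theta}(Bu)\bigr)\in L^{2}_{g}$ since $Bu\in H^{1}$ and $r$ is bounded, so $\Lambda(Bu)\in L^{2}_{g}$ for every first-order $\bo$-differential operator $\Lambda$. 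Since $\rho_{0}\in\Tbstar M\setminus o$ has at least one fiber coordinate $\sigma$, $\eta$, $\tau$ nonvanishing there, one of $D_{t}B$, $(rD_{r})B$, or a component of $D_{\theta}B$ lies in $\Psib^{m+1}$ and is elliptic at $\rho_{0}$, and it carries $u$ into $L^{2}_{g}$; hence $\rho_{0}\notin\WFb^{m+1}u$.

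For the reverse inclusion assume $\rho_{0}\notin\WFb^{m+1}u$ and $\rho_{0}\notin\WFb^{m+1}(\dop u)$; the task is to produce $B\in\Psib^{m}$ elliptic at $\rho_{0}$ with $Bu\in H^{1}$. If $r_{0}>0$ the $\bo$-calculus agrees with the ordinary calculus near $\rho_{0}$ and this is the standard equivalence of microlocal $H^{m+1}$-regularity with $\Psi^{m}$-regularity measured against $H^{1}$. If $r_{0}=0$ and $\rho_{0}\notin\Sigmadot$, then $\rho_{0}\notin\WFb^{m}(\dop u)$ (as $\WFb^{m}\subseteq\WFb^{m+1}$), and Corollary~\ref{lemma:first-order} gives $\rho_{0}\notin\WFb^{1,m}u$ at once. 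The substantive case is $r_{0}=0$ with $\rho_{0}\in\Sigmadot$, where necessarily $\sigma_{0}=\eta_{0}=0$ and $\tau_{0}\neq0$. I would argue by induction on $m$, decreasing by $1$ at each step; for $m\leq-1$ both $\WFb^{1,m}u$ and $\WFb^{m+1}u$ are empty, because $\Psib^{0}$ maps $H^{1}$ to itself (Lemma~\ref{lemma:psib-on-h1}) and $H^{1}\subseteq L^{2}_{g}$. For the inductive step, fix a small neighborhood $U$ of $\rho_{0}$ in $\Tbstar M$ on which $\tau$ is bounded away from $0$ and with $U\cap\bigl(\WFb^{m+1}u\cup\WFb^{m+1}(\dop u)\bigr)=\emptyset$, and pick an invariant $B\in\Psib^{m}$, elliptic at $\rho_{0}$, with $\WFb'B\subset U$. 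Corollary~\ref{lemma:control-of-H1-by-dt} then produces $G\in\Psib^{m-1}$ and $\tG\in\Psib^{m}$, which we may take microsupported in $U$, with $\norm{Bu}_{H^{1}}\leq C\bigl(\norm{D_{t}Bu}+\norm{Gu}_{H^{1}}+\norm{\tG\dop u}+\norm{u}_{H^{1}}\bigr)$, the inhomogeneous term entering only through the $L^{2}_{g}$-norm of $\dop u$ (as in Corollary~\ref{lemma:first-order}, via $P=\tL\dop$). Each term on the right is then finite: $D_{t}B\in\Psib^{m+1}$ is microsupported in $U$, so $D_{t}Bu\in L^{2}_{g}$; $\tG\dop u\in L^{2}_{g}$ since $U\cap\WFb^{m}(\dop u)=\emptyset$; $u\in H^{1}$; and $Gu\in H^{1}$ because the inductive hypothesis identifies $\WFb^{1,m-1}u$ with $\WFb^{m}u$ away from $\WFb^{m}(\dop u)$, while $U$ is disjoint from $\WFb^{m}(\dop u)$ and from $\WFb^{m}u$ (the latter being contained in $\WFb^{m+1}u$) -- this last point also supplying the lower-order a priori regularity of $u$ near $\rho_{0}$ needed to run the estimate. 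Therefore $Bu\in H^{1}$, i.e.\ $\rho_{0}\notin\WFb^{1,m}u$, and the induction closes.

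The only delicate point is this last case, $r_{0}=0$ with $\rho_{0}\in\Sigmadot$. Over $r=0$ the $H^{1}$-based regularity of $u$ at a point of $\Sigmadot$ is \emph{not} detected by its $L^{2}_{g}$-based $\bo$-wavefront set, since $H^{1}$-control involves the weighted derivatives $D_{r}=r^{-1}(rD_{r})$ and $r^{-1}D_{\theta}$, which are not $\bo$-operators. One is thus forced to invoke the equation in order to exchange the radial derivative in $\dop$ for $D_{t}$ -- which is elliptic on $\Sigmadot$ over $r=0$ precisely because $\tau\neq0$ there -- at the cost of the singular potential terms, which are absorbed by the Hardy inequality; this exchange is exactly what Corollary~\ref{lemma:control-of-H1-by-dt} packages, and is where the hypothesis $\abs{\charge}<1/2$ is used. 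The lower-order remainder $Gu$ that the estimate leaves behind is then removed by the induction on $m$.
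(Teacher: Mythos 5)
Your proof is correct and follows essentially the same route as the paper: the inclusion $\WFb^{m+1}u\subseteq\WFb^{1,m}u$ by observing that the $H^{1}$ norm controls $rD_r$, $D_t$, and $D_\theta$ applied to a microlocalized piece (the paper phrases this with a microlocal partition of unity, you with a pointwise ellipticity argument — the same content), and the converse via Corollary~\ref{lemma:control-of-H1-by-dt}. The only difference is that you spell out the downward induction on $m$ needed to dispose of the $\norm{Gu}_{H^{1}}$ remainder with $G\in\Psib^{m-1}$, which the paper compresses into ``follows immediately''; this is a welcome elaboration rather than a divergence.
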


\begin{proof}
Suppose $\rho_0 \notin \WFb^{1,m} u.$
  We may use a microlocal partition of unity in the $b$-calculus to break
  $u$ into pieces on each of  which one of the the operators $r D_r$
  $D_t,$ or $D_{\theta_j}$ is b-elliptic.  If $A \in \Psib^{m+1}$ and
  $G \in \Psib^m$ is elliptic on $\WFb' u,$ we thus obtain by
  microlocal ellipticity
$$
\norm{Au }^2\lesssim \norm{rDr G u}^2 + \norm{D_t G u}^2 +
\norm{\nabla_\theta G u}^2 + \norm{ u}^2_{H^1}\lesssim \norm{G u}^2_{H^1}+ \norm{ u}^2_{H^1}.
$$
and we obtain one direction of the lemma.

  The other direction of the lemma follows immediately from Corollary~\ref{lemma:control-of-H1-by-dt}.
\end{proof}

We now turn to the proof of Theorem~\ref{thm:bpropagation}.  Let
us first consider the case when $M=0$ and let $U$ denote a neighborhood
of $\rho_{0}$ in $\Sigmadot$ with
\begin{equation*}
  U \cap \{ \sigma > 0\} \cap \WFb^{s+1/2}u
  =  U \cap \WFb^{s+1/2}(\dop u) =\emptyset.
\end{equation*}
For our inductive hypothesis we assume that $\rho_{0} \notin
\WFb^{s}u$; we aim to show $\rho_{0} \notin \WFb^{s+1/2}u$.

Let $\omega = r^{2} + (t-t_{0})^{2}$, and let
\begin{equation*}
  \phi = - \hat{\sigma} + \frac{1}{\beta^{2}\delta} \omega.
\end{equation*}
Fix a small neighborhood $U$ of $(t = t_{0}, x= 0)$ in $\Sbstar M$ and
choose cutoff functions $\chi_{0}$, $\chi _{1}$, and $\chi _{2}$ with
the following properties:
\begin{itemize}
\item $\chi_{0}$ is supported in $[0, \infty)$, with $\chi_{0}(s) =
  \exp (-1/s)$ for $s > 0$,
\item $\chi_{1}$ is supported in $[0, \infty)$, with $\chi_{1}(s) = 1$
  for $s\geq 1$ and $\chi' \geq 0$, and 
\item $\chi_{2}$ is supported in $[-2c_{1}, 2c_{1}]$, and is equal to
  $1$ on $[-c_{1}, c_{1}]$.
\end{itemize}
Here $c_{1}$ is chosen so that $\hat{\sigma}^{2} + \hat{\eta}^{2} <
c_{1} < 2$ in $\Sigmadot \cap U$.

Now set
\begin{equation}
  \label{eq:hypcommutant}
  a = |\tau|^{s+1/2} \chi_{0}(2 - \phi / \delta)\chi_{1}(2 -
  \hat{\sigma}/\delta)\chi_{2}(\hat{\sigma}^{2}+\abs{\hat{\eta}}^{2})1_{\sgn\tau
  = \sgn \tau_{0}}
\end{equation}
and let $A$ be its quantization to an invariant element of
$\Psib^{s+1/2}$.  Note that
\begin{equation}
  \label{suppa}
  \supp a \subset \{ \abs{\hat{\sigma}} < 2\delta, \omega < 4 \delta^{2}\beta^{2}\},
\end{equation}
hence the support of $a$ in $\Tbstar M$ can be taken to be inside any
desired neighborhood of $\rho_{0}$.

In the following symbol construction and subsequent argument, we will
omit a standard regularization argument, described in detail in
\cite[(6.19) et seq.]{Va:08}.
\begin{lemma}
  \label{lemma:commutator-first-order}
  For $A$ defined as above,
  \begin{equation}
    \label{commutator-first-order}
    i^{-1} [ \dop , A^{*}A] = \tilde{R}\dop  - \sgn (\tau_{0})Q^{*}Q + \bR_{1}\frac{1}{r} +
    \bR_{2}D_{r} + \bR_{3}\frac{1}{r}\beta K + \bR + B_{0} + \alpha_{r}B_{1}
    + E' + E'',
  \end{equation}
  where
  \begin{itemize}
  \item $Q \in \Psib^{s+1/2}$ is invariant and self-adjoint with
    \begin{equation*}
      \sigmab(Q) =
      \sqrt{2}|\tau|^{s+1/2}\delta^{-1/2}(\chi_{0}'\chi_{0})^{1/2}\chi_{1}
      \chi_{2} 1_{\sgn\tau = \sgn\tau_{0}},
    \end{equation*}
  \item $\tilde{R} \in \Psib^{2s}$, 
  \item $\bR_j \in \Psib^{2s-1}$, 
  \item $\bR \in \Psib^{2s}$,
  \item $B_{0}, B_{1} \in \Psib^{2s+1}$ with
    $\abs{\sigmab(B_{\bullet})}$ equal to an order zero symbol times $C\beta^{-1}\sigmab(Q)^{2}$, 
  \item $E' \in \Psib^{2s+1}$ with $\WFb'E' \subset \{ \delta \leq
    \hat{\sigma}\leq 2\delta, \omega \leq 4\beta^{2}\delta\}$, and 
  \item $E'' \in \frac{1}{r}\Psib^{2s+1} + \Diff^{1}\!\Psib^{2s} + \Psib^{2s+1}$, with $\WFb'E'' \cap
    \Sigmadot = \emptyset$.
  \end{itemize}
  All terms above have microsupport within $\supp a$.  
\end{lemma}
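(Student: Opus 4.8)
The plan is to feed $C = A^{*}A$ into Lemma~\ref{lemma:first-order-commutator} and then reorganize its output using the explicit form~\eqref{eth} of $\dop$. Since $A$ is the quantization of the invariant symbol $a$, $A^{*}A$ is invariant with real, scalar, nonnegative principal symbol $c = a^{2}$, so Lemma~\ref{lemma:first-order-commutator} applies and expresses $\frac1i[\dop, A^{*}A]$ as $A_{0}\bigl(\alpha_{r}(i\pd[r]+\frac ir-\frac ir\beta K) - \frac\charge r\bigr)$ plus terms $B_{0}, \alpha_{r}B_{1}, \bB_{2}\frac1r, \bB_{3}D_{r}, \bB_{4}, \bB_{5}\frac1r, \bB_{6}D_{r}$ of the orders and with the symbols recorded there, all microsupported in $\WFb'(A^{*}A)\subseteq\supp a$. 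As announced before the lemma, I would carry this out with the standard regularization of $A$ following \cite[(6.19) et seq.]{Va:08}, suppressing it in the notation.

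Next I would extract the principal terms. Comparing with~\eqref{eth}, $\alpha_{r}(i\pd[r]+\frac ir-\frac ir\beta K) = i\alpha_{r}(\pd[r]+\frac1r-\frac1r\beta K) = \dop - i\pd[t] + \frac\charge r + V + \sum\alpha_{j}A_{j} + m\beta$, so the main term of Lemma~\ref{lemma:first-order-commutator} becomes $A_{0}\dop - A_{0}i\pd[t] + A_{0}\bigl(V + \sum\alpha_{j}A_{j} + m\beta\bigr)$, the two copies of $A_{0}\frac\charge r$ cancelling. The first summand is $\tilde R\dop$ with $\tilde R = A_{0}\in\Psib^{2s}$; the last summand lies in $\Psib^{2s}$ and goes into $\bR$. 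For the middle summand I use $\pd[t] = iD_{t}$, so $-A_{0}i\pd[t] = A_{0}D_{t}$, with principal symbol $-\pd[\sigma](a^{2})\,\tau = -2a\,(\pd[\sigma]a)\,\tau$. Since $a = \abs{\tau}^{s+1/2}\chi_{0}(2-\phi/\delta)\chi_{1}(2-\hat\sigma/\delta)\chi_{2}(\hat\sigma^{2}+\abs{\hat\eta}^{2})1_{\sgn\tau=\sgn\tau_{0}}$ and $\pd[\sigma]\phi = -\abs{\tau}^{-1}$, the term in which $\pd[\sigma]$ falls on $\chi_{0}$ equals $-\sgn(\tau_{0})\cdot 2\delta^{-1}\abs{\tau}^{2s+1}\chi_{0}'\chi_{0}\chi_{1}^{2}\chi_{2}^{2}1_{\sgn\tau=\sgn\tau_{0}} = -\sgn(\tau_{0})\,\sigmab(Q)^{2}$, with $\sigmab(Q)$ exactly as claimed (the square root makes sense because $\chi_{0}(s)=e^{-1/s}$ gives $\chi_{0}'=s^{-2}\chi_{0}\geq 0$); this is the $-\sgn(\tau_{0})Q^{*}Q$ term. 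The term where $\pd[\sigma]$ hits $\chi_{1}$ is microsupported where $\chi_{1}'\neq 0$, i.e.\ $\delta\leq\hat\sigma\leq 2\delta$ and, by~\eqref{suppa}, $\omega\lesssim\delta^{2}\beta^{2}$: this is $E'$. The term where $\pd[\sigma]$ hits $\chi_{2}$ is microsupported where $\chi_{2}'\neq 0$, i.e.\ $\hat\sigma^{2}+\abs{\hat\eta}^{2}\geq c_{1}$, which lies off $\Sigmadot$ and joins $E''$. The subprincipal part of $A_{0}D_{t}$ is of order $2s$ and goes into $\bR$.

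It remains to place the terms $B_{0}, \alpha_{r}B_{1}, \bB_{j}\frac1r, \bB_{j}D_{r}, \bB_{4}$ from Lemma~\ref{lemma:first-order-commutator}. Here $\sigmab(B_{0}) = \pd[t](a^{2}) = 2a\,\pd[t]a$ and $\sigmab(B_{1}) = \pd[r](a^{2}) = 2a\,\pd[r]a$; since the only $(r,t)$-dependence of $a$ enters through $\omega$ inside $\chi_{0}$, with $\pd[t]\phi = 2(t-t_{0})/\beta^{2}\delta$ and $\pd[r]\phi = 2r/\beta^{2}\delta$, one gets $\sigmab(B_{0}) = -\frac{2(t-t_{0})}{\beta^{2}\delta}\sigmab(Q)^{2}$ and $\sigmab(B_{1}) = -\frac{2r}{\beta^{2}\delta}\sigmab(Q)^{2}$, and on $\supp a$ one has $\abs{t-t_{0}},\,r < 2\delta\beta$, so $\abs{\sigmab(B_{\bullet})}$ is a bounded order-zero symbol times $C\beta^{-1}\sigmab(Q)^{2}$, as stated. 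The operators $\bB_{2}\frac1r$ and $\bB_{3}D_{r}$ are microsupported in $\supp\pd[\eta](a^{2})$; as $a$ depends on $\eta$ only through $\chi_{2}$, this lies in $\{\hat\sigma^{2}+\abs{\hat\eta}^{2}\geq c_{1}\}$, off $\Sigmadot$. Together with the $\chi_{2}$-term above and the commutators of $A^{*}A$ against $\alpha_{r}$ and $\beta K$ (all of which generate $\pd[\eta]$-factors, hence are supported off $\Sigmadot$) they assemble into $E''\in \frac1r\Psib^{2s+1} + \Diff^{1}\!\Psib^{2s} + \Psib^{2s+1}$. Then $\bB_{4}\in\Psib^{2s}$ joins $\bR$, while $\bB_{5}\frac1r$ and $\bB_{6}D_{r}$, with $\bB_{5},\bB_{6}\in\Psib^{2s-1}$, are $\bR_{1}\frac1r$ and $\bR_{2}D_{r}$; the $\bR_{3}\frac1r\beta K$ slot collects the order-$(2s-1)$ $\beta K$-bearing remainder produced when the angular part of $A_{0}(\alpha_{r}(\cdots)-\frac\charge r)$ is matched against the $\beta K$ piece inside $\tilde R\dop = A_{0}\dop$. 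Tracking supports at each step shows every resulting operator is microsupported within $\supp a$.

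The main obstacle is not any single estimate but the bookkeeping: making sure that the $\pd[\sigma]\chi_{0}$-contribution to $A_{0}D_{t}$ carries exactly the sign $-\sgn(\tau_{0})$ and symbol $\sigmab(Q)^{2}$ once the regularization is in place; that the $\pd[\sigma]\chi_{1}$-contribution is confined to $\hat\sigma\in[\delta,2\delta]$, the region in which the hypothesis of Theorem~\ref{thm:bpropagation} supplies a priori regularity; and that every commutator with the matrices $\alpha_{r}$ or with $\beta K$ is swept into $E''$ rather than surviving near $\Sigmadot$. Keeping the parameter $\beta$ explicit throughout is essential, since it is precisely the gain $\beta^{-1}$ in the symbols of $B_{0}$ and $B_{1}$ that will later permit their absorption into $Q^{*}Q$.
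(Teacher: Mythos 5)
Your proposal is correct and follows essentially the same route as the paper: apply Lemma~\ref{lemma:first-order-commutator} to the invariant operator $A^{*}A$, rewrite the $A_{0}$ term as $A_{0}(\dop+D_{t})$ modulo lower-order corrections, split according to which cutoff the $\sigma$-derivative hits to produce $-\sgn(\tau_{0})Q^{*}Q$, $E'$, and $E''$, and bound $\sigmab(B_{0})$, $\sigmab(B_{1})$ by $C\beta^{-1}\sigmab(Q)^{2}$ using that $r$- and $t$-derivatives can only fall on $\chi_{0}$ while $r,\abs{t-t_{0}}\lesssim\beta\delta$ on $\supp a$. The symbol computations (including the sign of the main term and the cancellation of the two $\charge/r$ contributions) check out.
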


\begin{proof}
  We apply Lemma~\ref{lemma:first-order-commutator} and employ the
  notation therein.  The term
  $A_0$ arising there has principal symbol $-\pd[\sigma](a^{2})$
  and arises from $\dop$ being nearly homogeneous in $r$ of degree
  $-1$.  We may rewrite the $A_0$ term in
  \eqref{Cfirstordercommutator} as $A_0(\dop+D_t),$ modulo $A_0$ times
  smooth lower-order terms (which are then absorbed into $\bR$).
We now split the symbol of $A_{0}$
  into three terms: those terms where the derivative falls on
  $\chi_{0}$ can be written in the form $\tQ^{2}D_{t}$, which we write
  as the product of $\sgn (\tau_{0})$ times squares $Q^{2}$ modulo a
  lower order error we that we absorb into $\bR$.  Meanwhile, those terms
  where the $\sigma$ derivative falls on $\chi_{1}$ we absorb into
  $E'$ and those on which it falls on $\chi_{2}$ we absorb into
  $E''$.  Thus, modulo further commutators (again absorbed into the
  error terms $\bR, \bR_j$) we have written the first term on the RHS of
  \eqref{Cfirstordercommutator} as $\tilde{R}\dop  - \sgn (\tau_{0})Q^{*}Q.$

  The $B_{1}$ term arising in Lemma~\ref{lemma:first-order-commutator}
  enjoys the asserted symbol bounds because $r$ derivatives on $a^{2}$
  may only fall on the $\chi_{0}$ term, giving
  \begin{equation*}
    2 |\tau|^{2s+1} (\chi_{0}'\chi_{0})\chi_{1}^{2}\chi_{2}^{2} (-2r)(\beta^{-2}\delta^{-2});
  \end{equation*}
  since $0 \leq r \leq 2\beta \delta$ on the support of $a$, this term
  is estimated by a multiple of
  $\beta^{-1}\delta^{-1}|\tau|^{2s+1}\chi_{0}'\chi_{0}\chi_{1}^{2}\chi_{2}^{2}$,
  which in turn is a multiple of $\beta^{-1}\sigmab(Q)^{2}$.
  Likewise, the $B_{0}$ term in
  Lemma~\ref{lemma:first-order-commutator} becomes the $B_{0}$ term
  here and is estimated similarly, as the $t$ derivative may also only
  hit the $\chi_{0}$ term.

  Finally, the $\bB_{2}$ and $\bB_{3}$ terms from
  Lemma~\ref{lemma:first-order-commutator} have symbols proportional
  to $\pd[\eta](a^{2})$, so the derivative must fall on $\chi_{2}$ and
  these terms are absorbed into $E''$.  The $\bB_{5}$ term is also
  absorbed into $\bR$.
\end{proof}

We now return to the main argument.  We pair $i^{-1}[\dop , A^{*}A]u$ with
$u$ and employ a regularization argument as in the elliptic setting.
On the one hand, we may bound
\begin{equation*}
  \abs{\ang{[\dop , A^{*}A]u, u}} = \abs{\ang{Au, A\dop u} -
    \ang{A\dop u, Au}}
  \leq 2 \norm{Au}\norm{A\dop u} \leq \epsilon \norm{Au}^{2} +
  \epsilon^{-1}\norm{A\dop u}^{2}.
\end{equation*}

On the other hand, we apply Lemma~\ref{lemma:commutator-first-order}.

The main term is
$-\sgn (\tau_{0})\ang{Q^{*}Qu, u} = -\sgn(\tau_{0})\norm{Qu}^{2}$,
which has a definite sign.  We may then bound
\begin{align*}
  \norm{Qu}^{2} &\leq \epsilon \norm{Au}^{2} +
  \epsilon^{-1}\norm{A\dop u}^{2} + \abs{\ang{\tR \dop u, u}} +
  \abs{\ang{\bR_{1}\frac{1}{r}u, u}} + \abs{\ang{\bR_{2}D_{r}u, u}} \\
                         &\quad + \abs{\ang{\bR_{3}\frac{1}{r}\beta K u, u}} + \abs{\ang{\bR u, u}} +
                           \abs{\ang{B_{0}u, u}} + \abs{\ang{\alpha_{r}B_{1}u, u}} \\
                         &\quad + \abs{\ang{E' u, u}} + \abs{\ang{E''u,u}}.
\end{align*}

As $\tR \in \Psib^{2s}$, the $\abs{\ang{\tR \dop u, u}}$ term is bounded
by $\norm{G_{s}\dop u}\norm{G_{s}u}$ for some $G_{s}\in \Psib^{s}$.  Similarly, the
terms involving $\bR_{j}$ can be estimated by
$\norm{G_{s-1}u}_{H^{1}}\norm{G_{s}u}$ for some $G_{s-1}\in
\Psib^{s-1}$ and $G_{s} \in \Psib^{s}$.  As $\bR \in \Psib^{2s}$, the
term $\ang{\bR u, u}$ is bounded by $\norm{G_{s}u}^{2}$.  This leaves
the terms involving $B_{0}$ and $B_{1}$ as well as the $E'$ and $E''$ terms.

The following lemma allows us to bound the terms involving $B_{0}$ and
$B_{1}$:
\begin{lemma}\label{lemma:Bestimate}
  There exists $G \in \Psib^{s}$ with $\WFb'G \cap \WFb^{s}u =
  \emptyset$ so that for $j = 0, 1$, 
  \begin{equation*}
    \abs{\ang{B_{j}u, u}} \leq C \beta^{-1} \norm{Qu}^{2}
    + C \norm{Gu}_{L^{2}}^{2} + C \norm{u}_{H^{1}}^{2}.
  \end{equation*}
\end{lemma}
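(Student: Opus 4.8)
The plan is to exploit the precise symbol structure of $B_j$ furnished by Lemma~\ref{lemma:commutator-first-order}. Recall from its proof that $B_j \in \Psib^{2s+1}$ is scalar with \emph{real} principal symbol
\begin{equation*}
  \sigmab(B_j) = e_j\, \sigmab(Q)^2, \qquad e_1 = -\frac{2r}{\beta^2\delta}, \quad e_0 = -\frac{2(t-t_0)}{\beta^2\delta},
\end{equation*}
where the $e_j$ are smooth functions of $(r,t)$ alone. On $\supp a$ one has $\omega = r^2 + (t-t_0)^2 < 4\beta^2\delta^2$ by \eqref{suppa}, so $0 \le r < 2\beta\delta$ and $\abs{t-t_0} < 2\beta\delta$, and therefore $\sup_{\supp a}\abs{e_j} \le 4\beta^{-1}$ \emph{with a constant independent of $\delta$} --- it is this uniformity that later lets $\beta$ be fixed large before $\delta$ is taken small. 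Let $M_j$ be multiplication by $\zeta e_j$, where $\zeta = \zeta(r,t)$ is a cutoff equal to $1$ near the spacetime projection of $\supp a$ and supported in $\{\omega < 9\beta^2\delta^2\}$, so that $\norm{M_j}_{L^2 \to L^2} \le 4\beta^{-1}$. Since $Q$ is self-adjoint with real symbol, $\sigmab(M_j Q^*Q) = e_j\,\sigmab(Q)^2 = \sigmab(B_j)$, and since $Q^*Q$ is microsupported in $\supp a$ where $\zeta \equiv 1$, we may write $B_j = M_j Q^*Q + F_j$ with $F_j \in \Psib^{2s}$ and $\WFb' F_j \subset \supp a$.

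Next I fix the error operator $G$. The inductive hypothesis $\rho_0 \notin \WFb^s u$ together with closedness of $\WFb^s u$ gives an open neighborhood $\mathcal{O}$ of $\rho_0$ with $\mathcal{O} \cap \WFb^s u = \emptyset$; shrinking $\delta$ we may assume $\supp a \subset \mathcal{O}$ by \eqref{suppa}. Choose $G \in \Psib^s$ microsupported in $\mathcal{O}$ and elliptic on $\supp a$, so that $\WFb' G \cap \WFb^s u = \emptyset$ and in particular $Gu \in L^2$. Then every operator of order $\le s$ microsupported in $\supp a$ maps $u$ into $L^2$ with norm $\le C(\norm{Gu}_{L^2} + \norm{u}_{H^1})$, by the standard microlocal elliptic estimate in the $\bo$-calculus (cf.\ Lemma~\ref{lemma:elliptic-estimation}); applying this --- writing $F_j = (F_j E)G$ modulo a smoothing error, $E \in \Psib^{-s}$ a parametrix of $G$ --- gives $\abs{\ang{F_j u, u}} \le C\norm{Gu}_{L^2}^2 + C\norm{u}_{H^1}^2$.

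It remains to estimate $\ang{M_j Q^*Q u, u}$. Using that $M_j$ is a real multiplication operator and $Q = Q^*$,
\begin{equation*}
  \ang{M_j Q^*Q u, u} = \ang{Q^*Qu, M_j u} = \ang{Qu, Q M_j u} = \ang{Qu, M_j Qu} + \ang{Qu, [Q, M_j]u}.
\end{equation*}
The first term is bounded by $\norm{M_j}_{L^2 \to L^2}\norm{Qu}^2 \le 4\beta^{-1}\norm{Qu}^2$. For the second, $[Q, M_j] \in \Psib^{s-1/2}$ (the multiplication operator $M_j$ has order $0$, so the commutator with $Q \in \Psib^{s+1/2}$ drops the order by one) and is microsupported in $\supp a$, so $\norm{[Q,M_j]u}_{L^2} \le C(\norm{Gu}_{L^2} + \norm{u}_{H^1})$ as above, and Cauchy--Schwarz with weight $\beta^{-1}$ gives
\begin{equation*}
  \abs{\ang{Qu, [Q,M_j]u}} \le \beta^{-1}\norm{Qu}^2 + \beta\norm{[Q,M_j]u}_{L^2}^2 \le \beta^{-1}\norm{Qu}^2 + C\beta\left(\norm{Gu}_{L^2}^2 + \norm{u}_{H^1}^2\right).
\end{equation*}
Combining the three contributions yields $\abs{\ang{B_j u, u}} \le C\beta^{-1}\norm{Qu}^2 + C\norm{Gu}_{L^2}^2 + C\norm{u}_{H^1}^2$ with the coefficient of $\beta^{-1}\norm{Qu}^2$ independent of $\delta$, as claimed. (As in the rest of this argument, $Qu \in L^2$ is literally valid only after the regularization described in \cite[(6.19) et seq.]{Va:08}, which I would insert verbatim; all estimates are uniform in the regularization parameter.)

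The main obstacle here is not conceptual but bookkeeping: one must verify that the $\beta^{-1}$ in front of $\norm{Qu}^2$ does not secretly conceal a factor of $\delta^{-1}$, because in the positive-commutator step of the main proof $\beta$ must be chosen large enough to absorb this term into the principal term $-\sgn(\tau_0)\norm{Qu}^2$ \emph{before} $\delta$ is shrunk to push $\supp a$ into the region $\mathcal{O}$ where $u$ is already $\Psib^s$-regular. Every other term generated along the way is genuinely of order $\le 2s$ and microsupported in $\supp a \subset \mathcal{O}$, hence harmless.
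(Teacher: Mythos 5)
Your proof is correct and follows essentially the same route as the paper's: both hinge on the observation that $\sigmab(B_j)=e_j\,\sigmab(Q)^2$ with $\sup_{\supp a}\abs{e_j}\lesssim \beta^{-1}$ uniformly in $\delta$, and then convert this symbol smallness into operator smallness modulo terms of order at most $2s$ microsupported in $\supp a$, which are absorbed using the inductive regularity there. The only difference is cosmetic: the paper factors symmetrically, $B_j=QC_1C_2Q+R$ with $C_i\in\Psib^0$ of symbol size $O(\beta^{-1/2})$, and invokes the symbol-sup bound on the $L^2$ operator norm of $C_i$, whereas you factor one-sidedly through an exact multiplication operator $M_j$ (whose $L^2$ norm is literally $\sup\abs{\zeta e_j}$) at the cost of the commutator $[Q,M_j]\in\Psib^{s-1/2}$, which you correctly dispose of as a lower-order term.
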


\begin{proof}[Proof of Lemma~\ref{lemma:Bestimate}]
  By the pseudodifferential calculus, we may write $B_{j} = QC_{1}C_{2}Q + R$,
  where $C_{i} \in \Psib^{0}$ satisfies $\abs{\sigmab(C_{i})} \leq C
  \beta^{-1/2}$ and $R \in \Psib^{2s}$, and
  \begin{equation*}
    \left( \WFb'R \cup \WFb'C_{i}\right) \cap \WFb^{s}u = \emptyset.
  \end{equation*}
  For any $w \in L^{2}$ with $\WFb^{0}w \cap \WFb'C_{i} = \emptyset$,
  our symbol estimate gives 
\begin{equation}\label{someequation}
  \begin{aligned}
    \norm{C_{i}w}_{L^{2}} &\leq C\beta^{-1/2} \norm{G_{0}w}_{L^{2}}
    +C\norm{G_{0}w}_{H_{\bo, g}^{-1}} + C \norm{w}_{H_{\bo, g}^{-N}}\\
  \end{aligned}
  \end{equation}
  for some microlocalizer $G_{0}\in \Psib^{0}$ with
  $\WFb'(1-G_{0})\cap \WFb'C_{i} = \emptyset$.  In particular, then,
  setting $w = Qu$ yields
  \begin{equation*}
    \norm{C_{i}Qu}\leq C \beta^{-1/2}\norm{Qu}_{H^{1}} + C
    \norm{G u}_{L^{2}} + C \norm{u}_{H^{1}},
  \end{equation*}
  for $G = G_{0}Q$ as in the statement of the lemma.

  An application of Cauchy--Schwarz to $\ang{B_{j}u, u}$ then yields
  the stated estimate and concludes the proof of Lemma~\ref{lemma:Bestimate}.
\end{proof}

The term involving $E'$ is bounded by $\norm{G_{s+1/2}u}^{2}$, where
$G_{s+1/2}\in \Psib^{s+1/2}$ has $\WFb'G_{s+1/2} \subset \{ \delta
\leq \hat{\sigma} \leq 2\delta, \omega \leq 4 \beta^{2}\delta^{2}\}$.
The hypothesis that $U \cap \{ \sigma > 0\} \cap \WFb'^{1, s-1/2}u =
\emptyset$ implies that this term is finite.

Finally, we estimate the term involving $E''$.  As the microsupport of
$E''$ is contained in the elliptic set of $\dop$, we may use
Corollary~\ref{lemma:first-order} to bound this term by
\begin{equation*}
  C \left( \norm{u}_{H^{1}}^{2} + \norm{G_{s-1}u}_{H^{1}}^{2} +
     \norm{\tG_{s} \dop u}_{L^{2}}^{2}\right) ,
\end{equation*}
where $G_{s-1} \in \Psib^{s-1}$ and $\tG_{s} \in \Psib^{s}$ are
microsupported in the elliptic region within $U$.

Thus,
  \begin{equation}\label{finalpropagation}
  \norm{Qu}^{2} \leq \epsilon \norm{Au}^{2} +
  \epsilon^{-1}\norm{A\dop u}^{2}+ \norm{\tG_{s} \dop u}_{L^{2}}^{2}+\text{finite},
\end{equation}
where the terms labeled finite have been estimated by our inductive
assumptions on $u.$  Since $\sigma_b(A)/\sigma_b(Q) \leq C,$ we
may absorb the first term on the right into the left side modulo finite terms,
provided $\epsilon$ is sufficiently small; $\norm{Q u}$ is thus
bounded.  As $Q$ is elliptic at $\rho_{0}$, $\rho_{0}
\notin \WFb^{s+1/2}u$ (and hence, by Lemma~\ref{lemma:H1toL2}, not in
$\WFb^{1,s-1/2}u$).  This completes the proof of
Theorem~\ref{thm:bpropagation}.

We now turn to the proof of Theorem~\ref{thm:b-module-reg}.  The
  arguments of Section~\ref{sec:interior-coiso} imply the propagation
  result away from the $r=0$, so we need only prove the result through
  the singularity.  We first
describe the commutators of $\dop$ with $R$ and $K$:

\begin{lemma}
  \label{lemma: b-module-commutators}
The commutators of $\dop$ with $R^{\ell}$ 
  $K^{\ell}$ are as follows:
  \begin{enumerate}
  \item $[\dop, R^{\ell}]$ can be written as a linear combination of
    $\dop R^{j}$ (or, indeed, $R^{j}\dop$) and $R^{j}\bF_{j}$ (or
    $\bF_{j}R^{j}$), where $j =0, 1, \dots, \ell-1$ and $\bF_{j}\in
    C^{\infty}$ (but not necessarily scalar).
  \item $[\dop, K^{\ell}]$ is a linear combination of terms of the
    form $K^{j}\bB K^{\ell-1-j}$, where $j = 0, 1, \dots, \ell-1$ and
    $\bB \in \Diffb^{1}$ only differentiates in the angular variables.
  \end{enumerate}
\end{lemma}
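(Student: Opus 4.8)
The plan is to reduce both claims to two structural facts about the ``principal part'' of $\dop$, after which everything follows from the Leibniz rule plus some bookkeeping. Recall the operator $\dop$ of \eqref{eth}, which I would split as $\dop=\dop_0+\dop_1$ with
\begin{equation*}
  \dop_0 = i\pd[t] - \frac{\charge}{r} + i\alpha_r\Bigl(\pd[r]+\frac{1}{r}-\frac{1}{r}\beta K\Bigr) - m\beta,
  \qquad \dop_1 = -V - \sum_{j}\alpha_j A_j ,
\end{equation*}
where the coefficients of $\dop_1$, being smooth on $\RR^3$, pull back to $\CI(X)$. The first fact is that $\dop_0+m\beta$ is homogeneous of degree $-1$ with respect to the spacetime scaling field $Z\equiv (t-t_0)\pd[t]+r\pd[r]=iR$: each of its terms $i\pd[t]$, $-\charge/r$, $i\alpha_r\pd[r]$, $i\alpha_r/r$, $-\tfrac{i}{r}\alpha_r\beta K$ is homogeneous of degree $-1$ for this scaling, the point being that $\alpha_r$, $\beta$ and $K$ involve only the angular variables and constant matrices, so that in particular the singular matrix term $-\tfrac{i}{r}\alpha_r\beta K$ is genuinely degree $-1$. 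Since $m\beta$ is a constant matrix and $R$ has scalar coefficients, $[m\beta,R]=0$, and this gives $[\dop_0,R]=\tfrac{1}{i}\dop_0+\tfrac{m}{i}\beta$. The second fact is that $[\dop_0,K]=0$: as $\dop_0$ is $\gamma^0$ times the Dirac operator with the purely radial potential $A_0=\charge/r$, $A_j=0$, this follows from the commutation lemma of \S\ref{sec:separation} together with $[\beta,K]=0$.

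For part (1) I would first treat $\ell=1$. Since $V$ and the $A_j$ are $t$-independent and $[f,R]=r[f,D_r]=ir(\pd[r]f)\in\CI(X)$ for any $f\in\CI(X)$ (using that $\pd[r]$ preserves $\CI(X)$), we have $[\dop_1,R]\in\CI(X)$, hence
\begin{equation*}
  [\dop,R]=[\dop_0,R]+[\dop_1,R]=\tfrac{1}{i}\dop+\bF_0,\qquad
  \bF_0\equiv-\tfrac{1}{i}\dop_1+\tfrac{m}{i}\beta+[\dop_1,R]\in\CI(X),
\end{equation*}
which is matrix-valued but not scalar. The general case is then an induction on $\ell$ via $[\dop,R^{\ell+1}]=[\dop,R^\ell]R+R^\ell[\dop,R]$: by the inductive hypothesis the first summand is a combination of $\dop R^{j+1}$ and $R^j\bF_jR=R^{j+1}\bF_j+R^j[\bF_j,R]$ with $[\bF_j,R]\in\CI(X)$, hence of the allowed form; and in the second summand $R^\ell\bF_0$ is already of the allowed form while $R^\ell\dop=\dop R^\ell-[\dop,R^\ell]$ is of the allowed form by the inductive hypothesis. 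Since moving $\dop$, or a factor $\bF_j$, past a remaining power of $R$ only reproduces terms of these same two types, one gets equally well the formulations with $R^j\dop$ or $\bF_jR^j$.

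For part (2), $[\dop_0,K]=0$ leaves only $[\dop_1,K]=-[V,K]-\sum_j(\alpha_j[A_j,K]+[\alpha_j,K]A_j)$ to analyze. Here $[V,K]=-\beta\,\bSigma\cdot(\bL V)$ and $[A_j,K]=-\beta\,\bSigma\cdot(\bL A_j)$ lie in $\CI(X)$, while a short computation using $\alpha_j\beta=-\beta\alpha_j$ and $\{\alpha_j,\Sigma_k\}=2\delta_{jk}\gamma_5$ gives $[\alpha_j,K]=-2\beta(\alpha_j+\gamma_5 L_j)$, which lies in $\Diffb^1$ and differentiates only in the angular variables. Thus $\bB\equiv[\dop,K]=[\dop_1,K]\in\Diffb^1$ differentiates only angularly, and the Leibniz identity
\begin{equation*}
  [\dop,K^\ell]=\sum_{j=0}^{\ell-1}K^j[\dop,K]K^{\ell-1-j}=\sum_{j=0}^{\ell-1}K^j\bB K^{\ell-1-j}
\end{equation*}
is precisely the asserted form (here $\bB$ may even be taken independent of $j$).

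I do not anticipate a serious obstacle: the substance is the pair of structural facts above, namely that $\dop_0$ is at once homogeneous of degree $-1$ under the spacetime scaling and exactly commuting with Dirac's operator $K$ --- both ultimately because $\alpha_r$, $\beta$, $K$ involve only the angular variables, and because $\dop_0$ is $\gamma^0$ times the classically separable radial Dirac--Coulomb operator --- while the perturbation $\dop_1$ contributes only $\CI(X)$ errors upon commutation. The one step needing care is the bookkeeping in the $R^\ell$ induction, keeping track of on which side of each power of $R$ the factors $\dop$ and $\bF_j$ sit.
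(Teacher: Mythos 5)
Your proposal is correct and follows essentially the same route as the paper: split $\dop$ into the exact radial Dirac--Coulomb part (homogeneous of degree $-1$ under the spacetime scaling and commuting with $K$ and $\beta$) plus the smooth perturbation $-V-\sum\alpha_jA_j$, then run the Leibniz/induction bookkeeping. Your explicit computation of $[\alpha_j,K]=-2\beta(\alpha_j+\gamma_5L_j)$ and your separate handling of the mass term are correct elaborations of steps the paper leaves implicit, not a different argument.
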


\begin{proof}
  To prove the first statement, we write
  $\dop = i \gamma^{0}\dirac_{\charge/r} + \bR$, where $\dirac_{\charge/r}$ is the
  Dirac operator with potential $\pot = (\charge/r, 0, 0, 0)$ and
  $\bR = -\sum_{\mu=0}^{3} \alpha_{\mu}A_{\mu}$.  Because
  $\dirac_{\charge/r}$ is homogeneous of degree $-1$ in $(t,r)$, we
  see that
  \begin{equation*}
    [\dop, R] = \frac{1}{i} (\dop - \bR) + [\bR, R] = \frac{1}{i} \dop
    + \bF_{0}.
  \end{equation*}
  We then observe that
  \begin{equation*}
    [\dop, R^{k}] = [\dop, R]R^{k-1} + R[\dop, R^{k-1}].
  \end{equation*}
  The first term on the right is then of the correct form by our
  calculation of $[\dop, R]$, while the second term is a linear
  combination of terms of the form $R \dop R^{j}$ and $R \bF R^{j}$,
  where $j = 0, 1, \dots, k-2$ by the inductive hypothesis.  As we can
  commute $R$ with $\dop$ and $\bF_{j}$ at the cost of lower order
  terms of the same form, this proves the first statement.

  We prove the second statement similarly.  Because $K$ commutes with
  $\dirac_{\charge/r}$ and $\gamma^{0}$, we can see that
  \begin{equation*}
    [\dop, K^{\ell}] = [ \bR , K^{\ell}] =
    \sum_{j=0}^{\ell-1}K^{j}[\bR, K] K^{\ell-1-j}.
  \end{equation*}
  As $\bR$ is non-scalar, $[\bR, K]\in \Diffb^{1}$ is only a first order
  differential operator, but differentiates only in the angular
  variables.  Taking $\bB = [\bR , K]$ finishes the proof.
\end{proof}

We now proceed inductively to prove Theorem~\ref{thm:b-module-reg}; the case $\ell=0$ is handled above in the
proof of Theorem~\ref{thm:bpropagation}.  Setting $S=K$ or $S=R$ as
appropriate, we proceed using the commutants
\begin{equation*}
  W_{\ell} = S^{\ell} A^{*}A S^{\ell},
\end{equation*}
where $A$ is the commutant employed above.  Commuting $\dop$ with
$W_{\ell}$ yields
\begin{equation}
  \label{eq:commutator-with-factors}
  [\dop, W_{\ell}] = S^{\ell}[\dop, A^{*}A]S^{\ell} + [\dop,
  S^{\ell}]A^{*}A S^{\ell} + S^{\ell} A^{*}A [\dop, S^{\ell}].
\end{equation}
After applying the operator and pairing with $u$, the first term
yields the same terms in the argument above with $\ell=0$ (sandwiched
between factors of $S$).  Our aim is therefore to absorb or otherwise
bound the terms arising from commuting $\dop$ with $S^{\ell}$ and
pairing with $u.$

In the case of $S= R$, Lemma~\ref{lemma: b-module-commutators} allows
us to bound the remaining two terms by
\begin{equation*}
  \epsilon \norm{AR^{\ell}u}^{2} + C\epsilon^{-1}\sum_{j=0}^{\ell-1}
  \left( \norm{AR^{j}\dop u}^{2} + \norm{AR^{j}F_j u}^{2}\right),
\end{equation*}
$F_j \in \CI.$
The first term in this bound can be absorbed into the main term
arising from the commutator $[\dop, A^{*}A]$ in
equation~\eqref{eq:commutator-with-factors}, while the second term is
finite by the hypothesis on $\dop u$.  The third term is finite by the
inductive hypothesis.

We now consider the case of $S=K$.  By Lemma~\ref{lemma:
  b-module-commutators}, the remaining two terms are bounded by
\begin{equation*}
  \epsilon \norm{AK^{\ell}u}^{2} + C\epsilon^{-1}\sum_{j=0}^{\ell-1}
  \norm{A K^{j}\bB K^{\ell-1-j}u}^{2}.
\end{equation*}
Each of these terms will ultimately be absorbed into the main term by
choosing $\delta$ sufficiently small using the following lemma:

\begin{lemma}
  \label{lemma:bound-a-by-q}
  Suppose $A$ is defined as above and $Q \in \Psib^{s+1/2}$ is
  invariant with symbol
  \begin{equation*}
    \sigmab(Q) = \sqrt{2}|\tau|^{s+1/2}\delta^{-1/2}
    (\chi_{0}'\chi_{0})^{1/2} \chi_{1}\chi_{2}1_{\sgn\tau=\sgn\tau_{0}}.
  \end{equation*}
  There exists some $C$ (independent of $\delta$) and some
  $G \in \Psib^{s-1/2}$ so that
  \begin{equation*}
    \norm{Au}\leq C \left( \sqrt{\delta} \norm{Q u} +
      \norm{Gu} + \norm{u} \right).  
  \end{equation*}
\end{lemma}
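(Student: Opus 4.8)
The plan is to derive a \emph{pointwise} comparison between the full symbol $a$ of $A$ and the symbol of $Q$, extract from it the gain of $\sqrt{\delta}$, and then transfer the inequality to an operator bound by writing $A$ as $EQ$ plus an operator that is one $\bo$-order lower.

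The starting point is the elementary identity $\chi_0'(\zeta)\chi_0(\zeta) = \zeta^{-2}\chi_0(\zeta)^{2}$ for $\zeta>0$, immediate from $\chi_0(\zeta)=\exp(-1/\zeta)$. Writing $\zeta = 2-\phi/\delta$ for the argument of $\chi_0$ appearing in \eqref{eq:hypcommutant} and comparing with the formula for $\sigmab(Q)$ in Lemma~\ref{lemma:commutator-first-order}, this identity gives, on $\supp a$,
\begin{equation*}
  a \;=\; \frac{\zeta\sqrt{\delta}}{\sqrt{2}}\,\sigmab(Q).
\end{equation*}
I would then bound $\zeta$ using the support restriction \eqref{suppa}: the factor $\chi_1(2-\hat{\sigma}/\delta)$ in $a$ forces $\hat{\sigma}<2\delta$ on $\supp a$, and since $\omega\ge 0$ we have $\phi = -\hat{\sigma}+\omega/(\beta^{2}\delta) > -2\delta$ there, hence $0<\zeta\le 4$ on $\supp a$. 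Therefore
\begin{equation*}
  a^{2} \;\le\; 8\delta\,\sigmab(Q)^{2}\qquad\text{pointwise},
\end{equation*}
and --- the point --- no power of $\delta$ other than this $8\delta$ enters the comparison.

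To pass to operators, I would first observe that $\{a\ne 0\}=\{\sigmab(Q)\ne 0\}$: the symbols $a$ and $\sigmab(Q)$ involve the same cutoffs $\chi_1,\chi_2$ and the same time-sign indicator, while $\chi_0(\zeta)$ and $(\chi_0'\chi_0)^{1/2}(\zeta)$ both vanish exactly for $\zeta\le 0$. Hence the quotient $a/\sigmab(Q)$ equals $\zeta\sqrt{\delta}/\sqrt{2}$ wherever it is defined; multiplying by an invariant cutoff $\psi$ which is $1$ on $\supp a$ and supported where $\zeta$ stays bounded produces an order-zero symbol $e$ with $e\,\sigmab(Q)=a$ and $\sup|e|\le 2\sqrt{2}\,\sqrt{\delta}$. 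Quantizing $e$ to an invariant $E\in\Psib^{0}$, we obtain
\begin{equation*}
  A \;=\; EQ + G, \qquad G\in\Psib^{s-1/2},
\end{equation*}
with $\WFb'G$ inside an arbitrarily small neighborhood of $\rho_{0}$ (since $\supp a$ is). Then $\norm{Au}\le \norm{E}_{L^{2}\to L^{2}}\norm{Qu}+\norm{Gu}$, and the proof reduces to showing $\norm{E}_{L^{2}\to L^{2}}\le C\sqrt{\delta}$ with $C$ independent of $\delta$.

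This last uniform bound is the only real obstacle. Differentiating the cutoffs $\chi_i(\cdot/\delta)$ generates negative powers of $\delta$, so $e$ lies in the order-zero symbol class only with $\delta$-dependent seminorms, and a crude Calder\'on--Vaillancourt estimate would give the useless bound $\norm{E}=O(\delta^{-1/2})$. I would resolve this exactly as Vasy does for the analogous commutants in \cite{Va:08}: pass to the rescaled fiber variables $\hat{\sigma}/\delta$ and $\omega/\delta^{2}$ --- the scaling already built into the definitions of $\phi$ and of the commutant $a$ --- in which $\psi\zeta$, and hence $e/\sqrt{\delta}$, has symbol seminorms bounded uniformly in $\delta$; then $\norm{E}_{L^{2}\to L^{2}}\le C\sqrt{\delta}$ follows from the (uniform) $L^{2}$-boundedness of the $\bo$-calculus. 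Assembling the pieces gives $\norm{Au}\le C(\sqrt{\delta}\,\norm{Qu}+\norm{Gu}+\norm{u})$. The moral is that the factor $\sqrt{\delta}$ must be produced purely from the principal-symbol comparison $a^{2}\le 8\delta\,\sigmab(Q)^{2}$, while every $\delta^{-1}$ created along the way is quarantined inside the strictly lower-order operator $G$, whose $\delta$-dependence plays no role in the applications of the lemma, since there $\delta$ is fixed before $\norm{Gu}$ is controlled by the inductive hypothesis.
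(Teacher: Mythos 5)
Your proposal is correct and is essentially the paper's proof: the paper likewise writes $A=CQ+R$ with $\sigmab(C)=(2-\phi/\delta)\sqrt{\delta}/\sqrt{2}$, bounds this symbol by $C\sqrt{\delta}$ on the relevant support, and absorbs the lower-order remainder into $G$ exactly as in Lemma~\ref{lemma:Bestimate}. The only remark is that your final rescaling step aimed at a genuinely uniform bound $\norm{E}_{L^{2}\to L^{2}}\leq C\sqrt{\delta}$ is more than is needed: as in \eqref{someequation}, it suffices that the \emph{principal} symbol is $O(\sqrt{\delta})$, with the $\delta^{-1/2}$-sized derivative corrections appearing only in an operator of order $-1$ whose contribution is measured in the weaker norm and quarantined in $\norm{Gu}+\norm{u}$, just as you observe in your closing sentence.
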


\begin{proof}
  The proof is nearly identical to the one in
  Lemma~\ref{lemma:Bestimate}; because $\sigmab(A)$ is a multiple of
  $\sigmab(Q)$, we may write $A = CQ + R$, where $C \in \Psib^{0}$ has
  principal symbol
  \begin{equation*}
    \sigmab(C)= \left( 2 - \phi / \delta\right) \sqrt{\delta}/ \sqrt{2}.
  \end{equation*}
  Introducing the microlocalizer $G$ as in Lemma~\ref{lemma:Bestimate}
  finishes the proof.
\end{proof}

We now claim that we can bound $\norm{AK^{j}\bB K^{\ell-1-j} u}$ by
$\norm{AK^{\ell}u}$ and terms that are finite by the inductive
hypothesis.  Given this claim, Lemma~\ref{lemma:bound-a-by-q} then
allows us to absorb these terms into the main one by choosing $\delta$
sufficiently small, finishing the proof.

The rest of the section is devoted to the proof of the claim.  First
observe that because $\bB$ and $K$ are differential operators acting
only in the angular variables, we may replace them by scalar
operators in these variables, i.e., we may first bound
\begin{equation*}
  \norm{AK^{j}\bB K^{\ell-1-j}u} \leq C \sum_{|\alpha|\leq \ell} \norm{A\pd[\theta]^{\alpha}u},
\end{equation*}
where $C$ is independent of $u$.  All but the terms with
$|\alpha| = \ell$ are finite by the inductive hypothesis.  Because $A$
and $\pd[\theta]^{\alpha}$ are scalar operators, we again appeal to
the inductive hypothesis so that it suffices to bound
$\norm{\pd[\theta]^{\alpha}Au}$ for $|\alpha| = \ell$.  For $\ell =2m$
even, it suffices to control $\norm{\Lap_{\theta}^{m}Au} + \norm{Au}$,
while for $\ell = 2m+1$, the following lemma shows that it is enough
to control $\norm{K\Lap_{\theta}^{m}Au} + \norm{Au}$.

\begin{lemma}
  \label{lemma:back-to-K}
  There is a constant $C$ so that for any $u \in H_{\bo, g}^{1}$,
  \begin{equation*}
    \norm{\grad_{\theta}u}\leq C \left( \norm{Ku} + \norm{u}\right),
  \end{equation*}
  where the norms are taken with respect to $L^{2}$.
\end{lemma}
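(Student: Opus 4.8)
The plan is to deduce the bound directly from the algebraic identity $\Lap_{\theta} = K^{2} - \beta K$ recorded above (equivalently \eqref{Ksquared}). Every operator appearing in the statement acts only in the angular variables and is fibrewise over the $(r,t)$-variables, so by density it suffices to prove the estimate for smooth $u$, and the choice of density on the $(r,t)$-factor (b- or metric) is immaterial since it does not affect the fibrewise adjoints of angular operators.

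First I would record that $K$ is self-adjoint: since $K = \beta(1 + \bSigma\cdot\bL)$ is the product of the self-adjoint operators $\beta$ and $1 + \bSigma\cdot\bL$, and these commute (equivalently, $[\beta, K] = 0$), the product is again self-adjoint; moreover $\beta$ is unitary. Pairing the identity $\Lap_{\theta} = K^{2} - \beta K$ with $u$ and integrating by parts in $\theta$ then gives $\norm{\grad_{\theta}u}^{2} = \ang{\Lap_{\theta}u,u} = \norm{Ku}^{2} - \ang{\beta K u, u}$, using that $\Lap_{\theta}$ is the positive angular Laplacian and the self-adjointness of $K$.

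Next I would estimate the cross term by Cauchy--Schwarz together with the unitarity of $\beta$: $\abs{\ang{\beta K u, u}} \leq \norm{\beta Ku}\norm{u} = \norm{Ku}\norm{u} \leq \tfrac{1}{2}\norm{Ku}^{2} + \tfrac{1}{2}\norm{u}^{2}$, whence $\norm{\grad_{\theta}u}^{2} \leq \tfrac{3}{2}\norm{Ku}^{2} + \tfrac{1}{2}\norm{u}^{2}$, and the lemma follows with, say, $C = \sqrt{3/2}$.

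There is essentially no obstacle here; the content lies entirely in the identity $\Lap_{\theta} = K^{2}-\beta K$, which has already been established. The only point needing a word of care is the integration by parts combined with the self-adjointness of $K$, but as $\grad_{\theta}$, $K$, and $\beta$ are fibrewise operators on $S^{2}$ whose adjoint properties are unaffected by the $r$- and $t$-dependence of the ambient density, this is routine.
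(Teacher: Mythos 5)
Your proposal is correct and follows essentially the same route as the paper's proof: pair the identity $\Lap_{\theta}=K^{2}-\beta K$ with $u$, use self-adjointness of $K$ to convert $\ang{K^{2}u,u}$ into $\norm{Ku}^{2}$, and estimate the cross term by Cauchy--Schwarz. Your write-up is in fact slightly more careful than the paper's (which has a small typographical slip in the intermediate bound), so there is nothing to add.
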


\begin{proof}
  Note that because $K$ contains only angular derivatives, $\norm{Ku}
  \leq C \norm{\grad_{\theta}u}$.  We then use that $\Lap_{\theta} =
  K^{2} - \beta K$ to see that
  \begin{align*}
    \norm{\grad_{\theta}u }^{2}&= \ang{\Lap_{\theta} u, u} =
                                 \ang{(K^{2}- \beta K)u, u} \\
                               &\leq \norm{K^{2}u} + \norm{Ku}\norm{u} \leq
                                 C \left( \norm{Ku}^{2} + \norm{u}^{2}\right).
  \end{align*}
\end{proof}

We now rely on Lemma~\ref{lemma:invariantcomm} and the
following observation: Because $K^{2} = \Lap_{\theta} + \beta K$,
\begin{equation}\label{K2m}
  K^{2m} = \Lap_{\theta} ^{m}+ \mathcal{L}_{2m}, \quad K^{2m+1} =
  \Lap_{\theta}^{m}K + \mathcal{L}_{2m+1},
\end{equation}
where $\mathcal{L}_{2m}$ is a constant linear combination of
$\Lap_{\theta}, \dots, \Lap_{\theta}^{m-1}$ and $\beta K, \Lap_{\theta}\beta
K, \dots, \Lap_{\theta}^{m-1}\beta K$, while $\mathcal{L}_{2m+1}$ is a
linear combination of $\beta \Lap_{\theta}, \dots, \beta
\Lap_{\theta}^{m}$ and $K, \Lap_{\theta}K, \dots, \Lap_{\theta}^{m-1}K$.

For $\ell = 2m$, we obtain (using Lemma~\ref{lemma:invariantcomm})
\begin{equation*}
  \norm{\Lap_{\theta}^{m}Au} = \norm{A\Lap_{\theta}^{m}u} \leq
  \norm{AK^{2m}u} + \norm{A\mathcal{L}_{2m}u},
\end{equation*}
where the second term is finite by the inductive hypothesis.  Likewise
for odd
$\ell = 2m+1$, 
\begin{equation}
  \norm{\Lap_{\theta}^{m}K Au} = \norm{A K  \Lap_{\theta}^{m}u} \leq
   \norm{AK^{2m+1}u} + \norm{A \mathcal{L}_{2m+1}u},
\end{equation}
where again by the inductive hypothesis the last term is
finite.  This finishes the proof of the claim and thus the proof of
Theorem~\ref{thm:b-module-reg}.

\section{Geometric improvement}
\label{sec:geom}

In this section we prove the second part of
Theorem~\ref{theorem:structure}, i.e., we show that the part of the
singularity of the fundamental solution lying on the diffracted wave
front $\diff$ and away from the geometrically  propagated light cone
$\geom$ is $1-0$ derivatives
smoother than the singularity along $\geom.$

There are two main steps to this argument.  In the first
(Section~\ref{sec:edgeprop}), we describe the propagation of edge
regularity, which allows us to propagate coisotropic regularity along
the geometric geodesics under a ``non-focusing'' condition.  In the
second part (Section~\ref{sec:global-reg-of-fundsoln}), we show that
we can apply the arguments of the first to a propagator.

\subsection{Propagation of edge regularity}
\label{sec:edgeprop}

In this section we establish the propagation of edge regularity.  The
propagation argument in this setting is somewhat less sensitive to
lower-order terms and so we are able to work with the second order
operator in this section.

Let $P$ be an operator satisfying the Klein-Gordon Hypotheses in
Section~\ref{section:KG}; recall that this means
\begin{equation}
  P\psi= -(\pa_0+i\frac{\charge}{r})^2 + \sum \pa_j^2 -m^2
  -i\frac{\charge}{r^2} \begin{pmatrix} 0 &
    \sigma_r\\ \sigma_r & 0 \end{pmatrix}
+\bR
\end{equation}
with
\begin{equation}\label{Rform2}
\bR=\charge \frac{\mathbf{W}_0}{r}+ \mathbf{W}_1^\alpha \pa_\alpha+\mathbf{W}_2,
\end{equation}
where the $\mathbf{W}_\bullet^\bullet$ coefficients are smooth but non-scalar.

As before, let $X=[\RR^3; 0]$ and $M = [\RR^{1+3}; \RR_{t}\times
\{0\}]$.  We now view $P$ as an operator in the \emph{edge} calculus on $M:$
$$
P \in r^{-2} \Diffe^2(\RR\times X)
$$
with
$$
\sigmae(P) = \frac{\lambda^2-\xi^2-\abs{\zeta}^2_{S^2}}{r^2}.
$$
The associated Hamilton vector field is then
$$
\hamvf= \frac{2}{r^{2}}\left(\left( \xi^{2} +
    \abs{\zeta}_{S^{2}}^{2}\right)\pd[\xi] + \xi \lambda \pd[\lambda]
  + \xi r\pd[r] - \lambda r\pd[t] \right) - \frac{1}{r^{2}}\hamvf_{S^{2}},
$$
where $\hamvf_{S^2}$ denotes the geodesic flow in $(\theta,\zeta) \in
T^*S^2.$  Let $\Sigma\subset \Sestar (\RR\times X)$ denote the
characteristic set of $P$.  

Recall that we have defined the edge Sobolev spaces be defined with respect to the
  b-weight as in \cite{Melrose-Wunsch1}, \cite{MVW1}.  Thus
$$
L^2_g  = r^{-3/2} \He^0.
$$
Likewise this is the scale on which we measure Sobolev-based edge
wavefront set $\WFe^*.$

We let $\module$ denote the graded module generated by angular
derivatives $\nabla_{S^2}.$ Let $\algebra^*$ denote the filtered
algebra over $\Psie^0(M)$ generated by $\module.$ Hence $\algebra$ is locally
generated by the operators $D_{\theta_j}.$

We will additionally be interested in testing for conormal regularity
along $N^*(\{t=r+r'\})$.  In addition to iterated regularity under vector
fields $D_{\theta}\in \module,$ this involves regularity under the operator
\begin{equation}\label{R}
  R=(t-r')D_t+rD_r;
\end{equation}
cf.\ \eqref{Rdef} where this operator appears with $r'=0.$  We will
often take advantage of time-translation invariance and tacitly set $r'=0$
in computing with this vector field.

The fact that, unlike the $D_{\theta_j}$'s, $R$ is not an edge vector field entails some
minor technical complication in what follows.

The commutator properties of $P$ with
$R$ and with the generators of $\algebra$ play an important role in
what follows.  As we are working in a
simpler geometric setting than that of \cite{MVW1}, we revert to the
simple expedient of using $\Lap_\theta,$ the angular Laplacian, as a
test operator for regularity in $\algebra.$
\begin{lemma}
  \label{lemma:edge-commutation}
  $\displaystyle [P,\Lap_\theta]=Q_0 D_r +Q_{1}D_{t} +r^{-2}Q_2$ and
  $\displaystyle [P,R]=-2i P +\frac{1}{r} Q_{3}$
  where
  \begin{equation*}
    Q_{j} \in \Diff^{1}(S^{2}), \quad j= 0, 1, 2,\quad
    \text{and}\quad Q_{3} \in \Diffe^{1}(M).
  \end{equation*}
\end{lemma}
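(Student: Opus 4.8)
The plan is to compute both commutators directly from the explicit form of $P$ in \eqref{Pform}–\eqref{Rform}, exploiting the fact that $\Lap_\theta$ and $R$ each have simple commutation properties with the individual pieces of $P$. Write $P = P_0 + \bR$ with $P_0 = -(\pd[0]+i\charge/r)^2 + \sum\pd[j]^2 - m^2 - i\frac{\charge}{r^2}\skewsigma$. For the first commutator, I would observe that $\Lap_\theta$ is a differential operator purely in the angular variables, so it commutes with $\pd[t]$, with $\charge/r$, and with $m^2$; the only nontrivial interactions are with the spatial Laplacian $\Lap = -\pd[r]^2 - \frac{2}{r}\pd[r] + \frac{1}{r^2}\Lap_\theta$ and with the matrix term $\frac{\charge}{r^2}\skewsigma$. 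Since $\Lap_\theta$ commutes with itself, $[\Lap, \Lap_\theta] = [-\pd[r]^2 - \frac2r\pd[r], \Lap_\theta] = 0$ as well (no $r$-dependence in $\Lap_\theta$), so in fact the $P_0$ contribution reduces entirely to $-i\frac{\charge}{r^2}[\skewsigma, \Lap_\theta]$, which is $r^{-2}$ times a first-order angular operator (as $\sigma_r$ depends only on $\theta$), contributing to the $r^{-2}Q_2$ term. For the $\bR$ piece: $[\charge\frac{\bW_0}{r}, \Lap_\theta]$ is $r^{-1}$ times a first-order angular operator, hence absorbed into $r^{-2}Q_2$ after noting $r^{-1} = r \cdot r^{-2}$ with $r$ smooth; $[\bW_1^\alpha\pd[\alpha], \Lap_\theta]$ produces terms of the form (first-order angular)$\cdot\pd[\alpha]$, which when $\alpha = 0$ gives a $Q_1 D_t$ contribution, when $\alpha = j$ spatial gives terms that I rewrite using $\pd[j] = (\text{angular vector field})/r + (\text{something})\pd[r]$ to land in $Q_0 D_r + r^{-2}Q_2$; and $[\bW_2, \Lap_\theta]$ is a first-order angular operator, into $r^{-2}Q_2$ after a factor of $r^2$.

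For the second commutator, the key structural input is that $P_0$ is \emph{homogeneous of degree $-2$} under the scaling $(t,r)\mapsto(\lambda t, \lambda r)$ — this is immediate from the form of $P_0$, since every term scales as $r^{-2}$ (recalling $\charge/r$ and $m$: wait, the $m^2$ term breaks exact homogeneity). I would handle the mass term separately: set $P_0 = \widetilde P_0 - m^2$ where $\widetilde P_0$ is exactly homogeneous of degree $-2$; then $[\widetilde P_0, R] = \frac{1}{i}(-2)\widetilde P_0 = -2i\widetilde P_0$ by Euler's relation for the generator $R = rD_r + tD_t$ (more precisely $\frac1i R = r\pd[r] + t\pd[t]$ is the scaling vector field). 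The mass term contributes $[-m^2, R] = 0$ since $m^2$ is constant, so $[P_0, R] = -2i\widetilde P_0 = -2i P_0 - 2im^2$; absorbing $2im^2 = r^{-1}\cdot(2im^2 r)$ into $\frac1r Q_3$ with $Q_3 \in \Diffe^1$ (here $Q_3$ of order zero, multiplication by the smooth function $2im^2 r$). For $[\bR, R]$: each term of $\bR$ is of the form $r^{-1}$ times smooth, smooth times $\pd[\alpha]$, or smooth; commuting with $R = (t-r')D_t + rD_r$ produces terms that are lower order in the edge sense and carry at least one factor of $r^{-1}$ after rearrangement, so they go into $\frac1r Q_3$ with $Q_3 \in \Diffe^1(M)$. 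I should be slightly careful that $R$ itself is not an edge vector field (it has a $D_t$ rather than $rD_t$), but since the commutator only ever differentiates the smooth coefficients and rearranges, the result stays controlled.

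The main obstacle, such as it is, is purely bookkeeping: making sure every error term is correctly attributed to $Q_0 D_r$, $Q_1 D_t$, $r^{-2}Q_2$, or $\frac1r Q_3$ with the stated orders, in particular tracking where factors of $r$ and $r^{-1}$ land so that $r^{-1}$ times a smooth angular operator genuinely sits inside $r^{-2}Q_2$ (using that $r \in \CI$), and verifying that the edge orders are as claimed ($Q_j \in \Diff^1(S^2)$ for $j=0,1,2$ and $Q_3 \in \Diffe^1(M)$). The nonscalar nature of the $\bW_\bullet$ and of the $\skewsigma$ term causes no difficulty here since we never need these coefficients to commute with anything — we only need them smooth (the matrix term smooth on $M$, i.e., depending only on $\theta$ for the $\sigma_r$ factor). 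I expect the whole argument to be a short, direct verification with no conceptual difficulty.
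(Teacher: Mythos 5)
Your proposal is correct and follows essentially the same route as the paper: for $[P,\Lap_\theta]$ you observe that the exact Coulomb model commutes with $\Lap_\theta$ except for the matrix term $-i\charge r^{-2}\skewsigma$ (which yields $r^{-2}Q_2$) with the $\bR$ terms supplying the rest, and for $[P,R]$ you use exact $(t,r)$-homogeneity of degree $-2$ of the model operator minus the mass term and lump everything else into $r^{-1}Q_3$. The only blemishes are trivial: a sign slip in the intermediate step $\tfrac1i(-2)$ (the final $-2i\widetilde P_0$ is right), and you do not explicitly flag the $2i\bR$ correction needed to pass from $-2iP_0$ to $-2iP$, though your observation that every term of $\bR$ lies in $r^{-1}\Diffe^1(M)$ already covers it.
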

\begin{proof}
All terms in the model operator $P-\bR$ with exact Coulomb potential
(see \eqref{Pform})
commute with $\Lap_\theta,$ except for the matrix valued term
$$
-i\frac{\charge}{r^2} \begin{pmatrix} 0 &
    \sigma_r\\ \sigma_r & 0 \end{pmatrix};
  $$
  commuting this with $\Lap_\theta$ gives the $r^{-2} Q_2$ term above,
  while the terms in $\bR$ contribute to the remaining error terms in
  $[P,\Lap_\theta].$

Additionally, by exact scaling symmetry in $t,r$
  $$
[P+m^2-\bR, R]=(-2i)(P+m^2-\bR),
$$
hence lumping the remaining terms in the $r^{-1} Q_3$ term gives the
desired expression for $[P,R].$
  \end{proof}

As above, let the canonical one form on $\Testar M$ be
$$
\lambda \frac{dt}{r}+ \xi \frac{dr}{r}+ \zeta \cdot d\theta.
$$
Let
$$
\IC_\pm(t_0,\theta_0) \equiv \big\{ t=t_0,\ r=0,\ \theta=\theta_0,\
\lambda=\pm 1,\ \xi =\pm 1,\ \zeta=0\big\} \subset \Sestar_{\RR\times \pa
  X} (M),
$$
$$
\OG_\pm(t_0,\theta_0) \equiv \big\{ t=t_0,\ r=0,\ \theta=\theta_0,\
\lambda=\pm 1,\ \xi =\mp 1,\ \zeta=0\big\} \subset \Sestar_{\RR\times \pa
  X} (M).
$$
These are the endpoint of the closures of bicharacteristic reaching
the front face of the blowup (i.e., the origin in the blown-down
space) from the interior as time increases (``incoming'') resp.\ as
times decreases (``outgoing'').  Note indeed that $\IC \cup \OG$
accounts for all the radial points of the edge Hamilton vector field $\hamvf.$

We now let $\flowout$ denote the
backward resp.\ forward flowouts of boundary points: if $p = \IC_{\pm} (t_0, \theta_0),$ let
$$
\flowout_{I}(p) \equiv \big\{ t=t_0-r,\ r\in (0, \ep),\ \theta=\theta_0,\
\lambda=\pm 1,\ \xi =\pm 1,\ \zeta=0\big\} \subset \Sestar(M),
$$
and
if $p = \OG_{\pm} (t_0, \theta_0),$ let
$$
\flowout_{O}(p) \equiv \big\{ t=t_0+r,\ r\in (0, \ep),\ \theta=\theta_0,\
\lambda=\pm 1,\ \xi =\mp 1,\ \zeta=0\big\} \subset \Sestar(M).
$$
These are the unique interior bicharacteristics containing the
corresponding radial points in their closures.

We now state a theorem about propagation of edge wavefront set,
together with module regularity.
\begin{theorem}\label{theorem:edgepropagation}
Suppose $u\in H_e^{-\infty,l}(M)$ solves
$$
Pu=0
$$
with $P$ satisfying the Klein-Gordon Hypotheses from \S\ref{section:KG}.
\begin{enumerate}
\item
Let $m>l+1.$  Set $p=\IC_\pm(t_0,\theta_0).$  If $\flowout_I(p) \cap
\WFe^m(\Lap_{\theta}^{j}u)=\emptyset$ for all $j=0,\dots, k$ then
$$
p \notin \WFe^{m,l'}(\Lap_{\theta}^{j} u)
$$
for all $j=0,\dots, k$ and all $l'<l.$

\item
Let $m<l+1.$  Set $p=\OG_\pm(t_0,\theta_0).$  Let $U$ denote a
punctured neighborhood of $p$ in $\Sestar_{\RR\times \pa X} (\RR\times
X)$.  If $U \cap \WFe^{m,l} \Lap_{\theta}^{j}u=\emptyset$ for all
$j=0,\dots, k$ then
$$
p \notin \WFe^{m,l}(\Lap_{\theta}^{j} u)
$$
for all $j=0,\dots, k$.

\item\label{bdyprop} For all $m,$ and $l'\leq l,$ $$\WFe^{m,l'}u \cap \Sestar_{\RR
    \times \pa X} (M)$$ is a union of maximally extended
  null bicharacteristics.

\item Suppose additionally that $R^j u \in H_e^{-\infty,l}$ and that
  $p \in \OG_{\pm}(t_0,\theta_0)$ has a neighborhood $U \subset
  \Sestar_{\RR \times \pa X}(\RR \times X)$ such that $U \cap
  \WFe^{m,l} u \subset \OG.$  Then for $0 \leq j'\leq j,$ $p \notin
  \WFe^{M,l}(R^j u)$ for $j \in \NN,$ provided $M \leq m-j$ and $M
  \leq l+1.$
\end{enumerate}
\end{theorem}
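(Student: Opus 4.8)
The plan is to prove part~(4) by induction on $j$, running at each stage a positive-commutator radial-point estimate in the edge calculus whose commutant carries $j$ factors of $R$ on each side --- the edge analogue, at the outgoing radial set $\OG$, of the module argument used to prove Theorem~\ref{thm:b-module-reg}.

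For the base case $j=0$ I would use the outgoing-radial-point estimate of part~(2), allowing wavefront set along $\OG$ near $p$: the hypothesis puts $\WFe^{m,l}u$ near $p$ inside $\OG$, so in particular $u$ is $\WFe^{m,l}$-regular along the interior flowout $\flowout_O(p)$ near $p$, which makes the flowout contribution in the commutator estimate finite, while the wavefront set permitted along $\OG$ itself is harmless in the sub-threshold regime $M\le l+1$ --- this being exactly the regime in which the ``weight'' term of the radial commutant has the favorable sign and does not degenerate on $\OG$. Bicharacteristic invariance of the wavefront set in the boundary (part~(3)) is used, as in part~(2), to propagate along the boundary to $p$. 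This yields $p\notin\WFe^{M,l}u$ for $M\le\min(m,l+1)$.

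For the inductive step, assume the conclusion for all $j'<j$, so $R^{j'}u$ is $\WFe^{M',l}$-regular at $p$ for every $M'\le\min(m-j',l+1)$; since $m-j'\ge m-j$, this is more regularity than the target index $M\le\min(m-j,l+1)$ sought for $R^ju$. Let $A\in\Psie^{M'}$ with $M'\le\min(m-j,l+1)$ be the radial commutant of the base case, microsupported near $p$ and elliptic at $p$, and put $W_j=R^jA^*AR^j$. Pairing $i^{-1}[P,W_j]u$ with $u$ and using $Pu=0$, I would expand
\begin{equation*}
  [P,W_j]=R^j[P,A^*A]R^j+[P,R^j]A^*AR^j+R^jA^*A[P,R^j].
\end{equation*}
The first term reproduces, between factors of $R^j$, the nonnegative main term $Q^*Q$ of the base estimate together with its error terms; paired with $u$ these bound $\|QR^ju\|$ modulo a piece absorbable into the left side (using that $\sigmae(A)$ is a bounded multiple of $\sigmae(Q)$, as in the proof of Lemma~\ref{lemma:bound-a-by-q}) and modulo terms finite by the base-case regularity of $R^ju$ along $\flowout_O(p)$. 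For the remaining two terms I would iterate Lemma~\ref{lemma:edge-commutation}: $[P,R^j]$ is a linear combination of $PR^{j'}$ and of $r^{-1}$ times a first-order edge operator times $R^{j'}$, with $j'<j$. The $PR^{j'}u$ contributions equal $[P,R^{j'}]u$, hence are again of this shape with smaller index and are finite by the inductive hypothesis; the $r^{-1}(\cdots)R^{j'}u$ contributions are controlled by the inductive regularity of $R^{j'}u$ near $p$, and the factor $r^{-1}$ is exactly what costs one order of edge regularity per commutation, forcing the restriction $M\le m-j$. An elliptic estimate disposes of the error terms supported away from the characteristic set $\Sigma$, and one finishes by absorbing the main term and passing to the limit in the (suppressed) regularization, as in Section~\ref{sec:hyperbolic-diffractive}.

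The step I expect to be the main obstacle is the edge-weight bookkeeping forced by the fact that $R$ is not an edge vector field: near $p$ one has $R\in\Psie^{1,1}$, not $\Psie^{1,0}$, so a naive count would cost one unit of the weight $l$ per factor of $R$, whereas the conclusion keeps the weight pinned at $l$. Handling this requires using the a priori hypothesis $R^{j'}u\in\He^{-\infty,l}$ to anchor the weight and checking that the $r^{-1}$ factors produced by $[P,R^j]$ are genuinely compensated in the pairings, so that only the differential index $M$, and not the weight, degrades. The remaining delicate points are getting the threshold $M\le l+1$ exactly right at the radial point --- the favorable sign of the weight term there --- and making sure the omitted regularization is compatible with the radial-point estimate.
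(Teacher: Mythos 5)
Your plan addresses only part (4) of the theorem, taking parts (1)--(3) as given inputs (part (2) for the base case, part (3) for propagation within the boundary). The paper's proof, by contrast, is organized around establishing part (1) in full: it constructs the radial-point commutant $A \in \Psie^{m',l'+1/2}$ with the positive-commutator identity of Propagation Estimate 1, verifies that the features special to this operator class --- the cross term $r^{-1}\pa_t$, the inverse-square potential, and above all the large anti-self-adjoint zeroth-order term, which lies in $\Psie^{0,2}$ and hence contributes only to the lower-order error $F$ --- do not disturb the argument of \cite{Melrose-Wunsch1}, and then obtains parts (2)--(4) as variants of the same construction. Relative to the full statement your proposal therefore has a real omission: nothing is said about the commutant construction itself, about why the anti-self-adjoint term is harmless in the edge calculus (in contrast to the $\bo$-calculus, where it forced the use of the first-order equation), or about the interpolation step needed to start the iteration when the a priori regularity lies below the threshold $l+1/2$. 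That verification is the actual content of the paper's proof.

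For part (4) itself your route --- induction on $j$ with commutant $W_j = R^jA^*AR^j$, expansion of $[P,W_j]$, and Lemma~\ref{lemma:edge-commutation} to handle $[P,R^j]$ --- is exactly the paper's intended one, parallel both to the $\Lap_\theta^k$ argument in the proof of part (1) and to the $\bo$-calculus analogue in Theorem~\ref{thm:b-module-reg}. The obstacle you flag (that $R \in \Psie^{1,1}$ near $p$, so naive counting appears to lose a power of the weight per factor of $R$) is genuine, but the paper already supplies the mechanism that resolves it, and you should invoke it explicitly: in $[P,R] = -2iP + r^{-1}Q_3$ the non-$P$ error $r^{-1}Q_3$ lies in $\Psie^{1,1}$, which compared with $P \in \Psie^{2,2}$ is one differential order lower \emph{and} carries one fewer power of $r^{-1}$. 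This is the same ``same order, smaller $r$-weight'' absorption used for the first two terms on the right of \eqref{Atheta}: after pairing, the cross terms are either finite by the inductive hypothesis on $R^{j'}u$, $j'<j$, or carry a positive power of $r$ relative to the main term $\norm{QR^ju}^2$ and so are absorbed by shrinking the support of the commutant in $r$; the weight $l$ in the conclusion is anchored by the standing hypothesis $R^ju\in \He^{-\infty,l}$, which is also what legitimizes the regularization and integration by parts. With that point made explicit, your inductive scheme for part (4) is sound.
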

As shown \cite[Section 6]{Melrose-Wunsch1}, the propagation along null
  bicharacteristics within $\pa M$ (part (\ref{bdyprop}) above)
connects points in $\IC$ and points in $\OG$ lying over points
$\theta_0,\theta_1$ that are separated by geodesics of length $\pi$
with respect to the metric on $\pa X.$  Here $\pa X$ is simply $S^2,$
so this means that the propagation is from a point $\theta_0$ to its
antipodal point $\theta_1=-\theta_0.$

    The theorem thus says that regularity propagates
\begin{enumerate}
\item From the interior of $M$ into incoming radial points in $\pa M$
  (a.k.a.\ the lift of $r=0$ under blowup) along bicharacteristics,
  above some threshold regularity dictated by the weight in $r$
  \item Across $\pa M$ along bicharacteristics, from incoming radial
    points to antipodal outgoing radial points (instantaneously)
    \item From outgoing radial points back into the interior of $M$,
      up to some threshold regularity dictated by the weight in $r.$
    \end{enumerate}
    Owing to the limits in regularity in the outgoing part of the
    theorem (which is typical in radial point problems---cf.\ \cite{MR95k:58168}), this result does not in fact say that regularity arrives
    at the boundary, propagates across it, and leaves, at any given
    Sobolev order.  Obtaining regularity (and, ultimately,
    conormality) at the outgoing wavefront
    will require subtler arguments involving $D_\theta$ and $R$
    regularity, hence the need for these factors to propagate through
    our estimates as well.
\begin{proof}
  The proof is the same as that of Theorem~8.1 of
  \cite{Melrose-Wunsch1}.  We sketch the first part here in order to
  verify that the passage to a slightly different class of operators
  under consideration here (with cross term involving $r^{-1} \pa_t,$
  inverse square potential terms, a principally scalar system with a
  large anti-self-adjoint $0$'th order term) do not vitiate the
  arguments used there.

Let $p=\IC_\pm(t_0, \theta_0).$  We will begin by sketching the proof of the
following propagation result, which gives the first part of the
theorem up to the factors in of $\Lap_\theta^k$
\begin{propest}\label{propest:1}
If $m'>l'+1/2,$ $u \in \He^{-\infty,l'}(M),$ $p \notin
\WFe^{m',l'} (u),$ and $\flowout_I(p)\cap \WF^{m'+1/2}
u=\emptyset,$ then $p \notin \WFe^{m'+1/2,l'}u.$
\end{propest}
To establish Propagation Estimate~\ref{propest:1} we choose $A\in
\Psie^{m',l'+1/2}$ such that
\begin{equation}\label{maincommutator1}
P^* A^*A-A^*A P = \pm (A')^* (A')\pm \sum B_j^* B_j + E+K +F,
\end{equation}
where
\begin{enumerate}
\item $A,$ $A'$ are microsupported near $p.$
\item 
$A' \in \Psie^{m'+1/2, l'+3/2}$ with 
$\sigmae(A') = \sigmae(A) \cdot (\pm (m'+l'+1/2)\xi)^{1/2}$
\item $E \in \Psie^{2m'+1,2l'+3}$ and $\WF' E$ is in an arbitrarily small neighborhood of a single
  point in $\flowout_I(p).$
\item $K   \in \Psie^{2m'+1,2l'+3}$ and $\WF' K \cap \Sigma=\emptyset$
\item $F$ is of lower order, lying in $\Psie^{2m',2l'+3}.$  (Note
  that it is only the pseudodifferential order that is lower, not the weight.)
\end{enumerate}
Notwithstanding that our convention for b-Sobolev spaces is to base
them on $L^2_b,$ the adjoints above are all taken with respect to the
inner product on $L^2_g,$ as we will use this inner product (with
respect to which $P$ is mostly self-adjoint) in making a pairing
argument below.

The operator $A$ is constructed roughly as follows: if $m+l > 0$, then
$\hamvf (\lambda^{m}r^{l}) = (m+l)\xi \lambda^{m}r^{l}$, so that if
$\chi(s) \equiv 0$ for $s<0$ and $\chi(s) \equiv 1$ for $s \geq 1$,
$\hamvf \left(\chi (\pm \lambda) \chi (\pm \xi) (\pm
  \lambda)^{m}r^{l}\right)$ has the same sign as $\xi$ (the $\pm$ used
here).  We can localize in the $\theta$ variable as in the
more general treatment in \cite[Section 6]{Melrose-Wunsch1} by using a
function given, in our blown-down Euclidean coordinates
$(x,\underline{\xi}) \in T^*\RR^3$ by cutting off
$-\hat{\underline{\xi}}$ to lie in a small neighborhood of any desired
$\theta_0$; such a cutoff manifestly commutes with the Hamilton flow,
and is shown in \cite{Melrose-Wunsch1} to lift to be a smooth symbol
on $\Tbstar M.$ If a geodesic arrives at the origin, then since it is
oriented radially, its angle of
arrival $\theta \in S^2$ is manifestly $-\hat{\underline{\xi}},$ hence
we have achieved an angular localization.
Finally, a cutoff in
$\smallabs{\zeta}/\smallabs{\lambda}$ has the same sign as the signed
terms listed above.  Thus, the product of these cutoffs localizing in
$\theta,\zeta$ with
$\chi (\pm \lambda) \chi (\pm \xi) (\pm \lambda)^{m}r^{l}$ may be
quantized to give an $A$ with the desired properties -- see Lemma~7.1
of~\cite{Melrose-Wunsch1} for details. 

We remark that the system
under consideration here may be treated as a scalar equation from the
point of view of the positive commutator argument because the
principal symbol of $P$ in the edge calculus is scalar.  In
particular, the anti-self-adjoint term in $P$,
$$
-i \frac{\charge}{r^2}
\begin{pmatrix} 0 &
  \sigma_r\\ \sigma_r & 0 \end{pmatrix},
$$
which is large enough to disrupt commutator arguments in the b-calculus,
lies in $\Psie^{0, 2},$ hence in the twisted commutator
$P^*A^*A-A^*A P$ gives rise to a term in $\Psi^{2m', 2l'+3}$ which
may be included in the lower-order error term $F$ above.

Now the propagation argument follows by pairing the equation
\eqref{maincommutator1} with $u,$ \emph{using the metric inner
  product} $r^{2} \, dr \,d\theta.$  The left-hand-side is zero,
by integration by parts.  Technically, in fact, we
require an approximation of $A$ by operators in $\Psie^{-\infty,
  l'+1/2}$ in order to justify this integration by parts---see
\cite{Melrose-Wunsch1} for details of this approximation process,
which involve a family of smoothing operators $A_\delta$ with a
further parameter approximating $A$ as $\delta\downarrow 0.$

The terms of the right hand side of the pairing are then as follows.
The term $\norm{A'u}_{L^2_g}^2$ is precisely what we need to control:
note that in terms of the b/edge-volume form $dr/r \, dt \,
d\theta=r^{-3} dV_g,$ this term is of the
form
$$
\norm{r^{3/2} A'}^2_{L^2_e},
$$
hence controls $\WFe^{m'+1/2, l'}u.$
The terms $\norm{Bu}^2$ have the same sign, and hence may be
dropped.  The term with $E$ is controlled by our incoming wavefront
set hypothesis.  The term with $K$ is controlled by microlocal
elliptic regularity. And the term with $F$ is controlled by our
assumption $p \notin\WFe^{m',l'} u.$  This concludes our proof
of Propagation Estimate~\ref{propest:1}.

Now we can employ Propagation Estimate~\ref{propest:1} iteratively to
obtain the first part of the theorem, in the case $k=0.$
We know a priori that $u \in \He^{q,l}$ for \emph{some} $q;$ if
$q>l+1/2$ we may immediately iterate the propagation estimate to
obtain the result of the theorem.  If not, we must artificially lower
our $l$ to some $l'<q-1/2$ in order to start the iteration.  In this
case, however, an interpolation argument still recovers the result but
ends up with $l'=l-\ep$ for any desired $\ep>0$---see Figure~1 of
\cite{Melrose-Wunsch1} and related discussion.

To include the module regularity in the first part of the theorem, we
proceed inductively, employing the same commutant as above and considering
the twisted product
\begin{equation}\label{Atheta}
\begin{aligned}
  &P^{*} \Lap_{\theta}^{k} A^{*}A \Lap_{\theta} ^{k}-
    \Lap_{\theta}^{k}A^{*}A\Lap_{\theta}^{k}P \\
  &\quad = [P^{*}, \Lap_{\theta}^{k}] A^{*}A \Lap_{\theta} ^{k}-
    \Lap_{\theta}^{k}A^{*}A[\Lap_{\theta}^{k}, P] \\
  &\quad\quad + \Lap_{\theta}^{k} \left( P^{*}A^{*}A - A^{*}AP\right) \Lap_{\theta}^{k}.
\end{aligned}
\end{equation}
The last term gives rise to similar terms as in the propagation
estimate (with $u$ replaced by $\Lap_{\theta}^{k}u$) and so
allow us to control $\WFe^{m'+1/2,l'}\Lap_{\theta}^k u$.
Indeed, together with terms that are finite by induction, this term controls
$\sum_{\smallabs\alpha \leq 2k} \norm{D^{\alpha}_\theta A' u}^2$ with
$A'$ as before.
The first
two terms on the RHS of \eqref{Atheta} can then be absorbed into this main
term (modulo inductively finite terms);
here we use the fact that  while having the same order, these error terms have a smaller
$r$ weight.

are controlled by the induction hypothesis (together with
the description of $[P, \Lap_{\theta}]$ given by
Lemma~\ref{lemma:edge-commutation}).

The remaining parts of the theorem follow in an essentially identical
way to those of Theorem~8.1 of~\cite{Melrose-Wunsch1}, and similar to
the arguments given above.
\end{proof}

\subsection{Global propagation of coisotropic regularity}
\label{sec:global-reg-of-fundsoln}

Our aim in this section is to apply
Theorem~\ref{theorem:edgepropagation} to the solution of
$(i\dirac_{\pot}-m)u = 0$ with initial condition $\psi_{0}\delta_{y}$ and
verify that the diffracted wavefront is $1-0$ orders smoother than the
propagated one.

The sketch of the proof is as follows:
For each time, the solution is a distribution $u$ of Sobolev order
$-3/2 - 0$.  An angularly smoothed version of the solution,
$\ang{\Lap_\theta}^{-M}u $ (for $M\gg 0$) is, by contrast, a
distribution of order $-1/2 - 0.$ (In the language below, $u$
has global nonfocusing regularity of order $-1/2-0$).  Additionally, at a
point on the diffracted front away from the propagated light cone,
Theorem~\ref{theorem:edgepropagation} shows that $u$ has infinite
order coisotropic regularity with respect to a \emph{weaker} Sobolev norm,
i.e., $D_\theta^\alpha u \in H^{k}$ for all $\alpha,$ with $k$ fixed.
Interpolation of the coisotropic regularity with the angular smoothing
effect then shows that in fact $u$ has infinite order
coisotropic regularity with respect to the \emph{better space} (up to
an $\epsilon$ loss) and therefore
is locally a distribution of Sobolev order $-1/2-0$ enjoying coisotropic
regularity.  Additionally propagating powers of $R=tD_t+rD_r$ through
the evolution then suffices to show that $u$ enjoys Lagrangian
regularity with respect to $H^{-1/2-0}$ along the diffracted wave, as desired.



\begin{definition}
Fix a Hilbert space $\hilbert$ and a set $K\subset \Sbstar (M).$

A distribution on $\RR \times X$ enjoys \emph{coisotropic regularity} (of order $2N$) with respect to $\hilbert$ on $K$
if there exists a properly supported operator $A \in \Psib^0(M),$ elliptic on $K,$ such that 
  $$
(\Id+\Lap_\theta)^NAu  \in \hilbert.
$$  

A distribution on $M$ is \emph{nonfocusing} with respect to
$\hilbert$ on $K$ if there exists a properly supported operator $A \in \Psib^0(M^\circ),$ elliptic on $K$ and there exists $N \in \NN$ such that 
  $$
Au = (\Id+\Lap_\theta)^N u',\quad u' \in \hilbert.
$$

We also make analogous definitions at the level of Cauchy data, i.e.,
distributions on $X$: if $\hilbert'$ is a Hilbert space of
distributions on $X,$ and $K \subset \Sbstar X$,
a distribution on $X$ enjoys coisotropic regularity (of
order $2N$) with respect to $\hilbert$ on $K$
if there exists a properly supported operator $A \in \Psib^0(X),$ elliptic on $K,$ such that 
  $$
  (\Id+\Lap_\theta)^NAu \in \hilbert.
$$  

A distribution on $X$ is nonfocusing with respect to $\hilbert'$ on $K$ if there exists a properly supported operator $A \in \Psib^0(M),$ elliptic on $K$ and there exists $N \in \NN$ such that 
  $$
Au = (\Id+\Lap_\theta)^N u',\quad u' \in \hilbert'.
$$
\end{definition}
One could of course refine the nonfocusing definition by
specifying in the terminology the power $N$ for which it holds, but in practice we will
be concerned with the union of this nonfocusing condition over all
possible $N.$  In this paper, moreover, we will mainly be concerned with
localizing over a particular set in the $t$ variable, but will neither
localize in other variables nor microlocalize, hence the subtleties of
microlocalizing in the b-calculus are moot.

In practice, it is convenient to take $\hilbert$ to be
$L^2_{\loc}(\RR; \dom^s)$ (where we will drop the ``loc'' from now on
as global estimates in time play no role here).  This formulation is convenient for duality
arguments owing to the sensible behavior of these spaces near the
origin, but away from the origin, we remark that nonfocusing with
respect to $\dom^s$ is in fact equivalent to nonfocusing with
respect to $H^s.$

Note also that we may equivalently test for coisotropic regularity with powers of Dirac's angular operator $K$ instead of powers of
$\Lap_\theta:$ Since
$$
K^2-\beta K=\Lap_\theta,
$$
regularity under powers of $K$ up to $2N$ yields regularity under
$(\Id+\Lap_\theta)^N;$ conversely, regularity under $(\Id + \Lap_\theta)^N$
yields $K$-regularity by ellipticity of $\Lap_\theta$ in the angular
variables.  Likewise the condition of nonfocusing can be recast as
lying in the range of sums of powers of $K,$ and we will use this
alternative version below.

\begin{lemma}\label{lemma:coiso1}
Let
  $$
(i\dirac_\pot-m)u=0.
$$
If for some $\ep>0,$ $u$ enjoys coisotropic regularity of order $N$ with respect to $L^2_{\loc}(\RR; \dom^s)$ on
$(-\ep,\ep)_t \times X$
then $u$ enjoys coisotropic regularity of order $N$ with respect to
$L^2_{\loc}(\RR; \dom^s)$ globally on $M.$

If for some $\ep>0,$ $u$ enjoys the nonfocusing condition with respect to $L^2_{\loc}(\RR; \dom^s)$ on
$(-\ep,\ep)_t \times X$
then $u$ enjoys the nonfocusing condition with respect to
$L^2_{\loc}(\RR; \dom^s)$ globally on $M.$

The conditions of coisotropic regularity resp.\ nonfocusing w.r.t.\
$L^2_{\loc}(\RR; \dom^s)$ are moreover equivalent to the conditions of
coisotropic regularity resp.\ nonfocusing of the Cauchy data $u(t_0)$
(for any $t_0$) w.r.t\ $\dom^s.$
\end{lemma}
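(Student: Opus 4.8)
The strategy is to shuttle between the solution $u$ on a time slab and its Cauchy data $u(t_0)$ by means of the energy estimate of Lemma~\ref{lemma:diracenergy}, and then to propagate the regularity of the Cauchy data in the time variable using Proposition~\ref{proposition:interiorcoiso} away from $r=0$ and Theorem~\ref{thm:b-module-reg} through $r=0$. The mechanism that makes the energy estimates close is that, by Lemma~\ref{lemma: b-module-commutators} applied to $\dop = i\partial_t - \ellipticdirac$ (see \eqref{eq:defin-of-first-order-op}), the commutator $[\ellipticdirac, K^{\ell}]$ is a differential operator of order at most $\ell$ which differentiates only in the angular variables and has \emph{smooth} coefficients, with no $1/r$ singularity; together with the equivalence, recalled above, between testing coisotropic regularity by powers of $K$ and by powers of $\Lap_\theta$, this says that the module generated by $K$ is essentially preserved by the Dirac evolution even though $V$ and the $A_j$ are nonzero.

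\emph{Step 1 (solution $\leftrightarrow$ Cauchy data on an interval).} I would first show that for an admissible solution $u$ and any $t_0$, $u$ enjoys coisotropic (resp.\ nonfocusing) regularity of a given order with respect to $L^2((t_0-T,t_0+T);\dom^s)$ if and only if $u(t_0)$ does so with respect to $\dom^s$. For the coisotropic case, after a routine regularization (replacing $u$ by $\Theta_{-M}u$ and passing to the limit) one may assume $u \in \CI(\RR;\dom^\infty)$ and apply Lemma~\ref{lemma:diracenergy} with $Q$ ranging over the powers $K^0, K^1,\dots$ up to the relevant order: since $[\ellipticdirac, K^{\ell}]$ is a smooth-coefficient angular operator controlled by the $K^{j}$ with $j<\ell$, one obtains a closed system of differential inequalities for $\tfrac{d}{dt}\norm{K^{j}u(t)}_{\dom^s}^2$, and Gronwall transfers $K$-regularity from $t_0$ to the whole interval and back; unitarity of $e^{-it\ellipticdirac}$ on $\dom^s$ then packages this as the asserted equivalence. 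The nonfocusing case is dual: $u(t_0)$ is nonfocusing with respect to $\dom^s$ exactly when it pairs boundedly against every element of $\dom^{-s}$ enjoying coisotropic regularity of the matching order, and since $e^{i(t-t_0)\ellipticdirac}$ is unitary on $\dom^{-s}$ and, by the coisotropic half of this step applied to the time-reversed Dirac equation, carries coisotropic data to coisotropic data, the identity $\ang{u(t),v} = \ang{u(t_0), e^{i(t-t_0)\ellipticdirac}v}$ delivers the equivalence. This already proves the last assertion of the lemma, and reduces the first two to propagating coisotropy (resp.\ nonfocusing) of the Cauchy data from $t\in(-\ep,\ep)$ to all $t$.

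\emph{Step 2 (propagation in time).} Let $I\subseteq\RR$ be the set of $t_0$ for which $u(t_0)$ enjoys coisotropic regularity of the given order with respect to $\dom^s$; by hypothesis and Step 1, $(-\ep,\ep)\subseteq I$, and I would argue $I$ is both open and closed, hence all of $\RR$. Both facts follow by propagating along the bicharacteristic flow: away from $r=0$ one uses Proposition~\ref{proposition:interiorcoiso}, which propagates $\WF^s(D_\theta^\alpha u)$ along null bicharacteristics and, via Lemma~\ref{lemma:domsob} identifying $\dom^s$ with $H^s$ over $X^\circ$, is exactly propagation of coisotropic regularity in the interior; across $r=0$ one uses Theorem~\ref{thm:b-module-reg} with $S=K$ (applied after shifting $u$ into $H^1$ by a power of $\Theta$, permissible since $\Theta_\bullet$ commutes with the equation up to smoothing and with $K$ and $\Lap_\theta$), which carries the b-module regularity $\WFb^{1,m}(K^{j}u)$ from the incoming region $\{\sigma/\tau>0\}$ to the outgoing one, using Lemma~\ref{lemma:H1toL2} to move between $\WFb^{1,m}$ and $\WFb^{m+1}$ and the $K$--$\Lap_\theta$ equivalence to translate the $\dom^s$-coisotropy hypothesis into a statement on $\WFb^{1,m}(K^{j}u)$. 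Since any bicharacteristic in the diffractive sense meets $r=0$ only at a discrete set of times, a finite chain of these two steps covers the backward flowout from any point over any time, which gives $I$ open and closed; with $I=\RR$, the first assertion of the lemma follows from Step 1 on each interval. The nonfocusing assertion then follows from the duality of Step 1: nonfocusing of $u(t_0)$ over $X$ means it pairs boundedly against every coisotropic $v\in\dom^{-s}$, and by Step 2 applied to the time-reversed equation such $v$ remain coisotropic for all times, so $\ang{u(t),v}=\ang{u(t_0),e^{i(t-t_0)\ellipticdirac}v}$ propagates the nonfocusing of $u$ at $t=0$ to all $t$.

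The main obstacle I expect is bookkeeping across the three Sobolev scales in play---the $H^1$-based $\WFb^{1,m}$ of Section~\ref{sec:diffractive}, the spectral domains $\dom^s$, and ordinary $H^s$---keeping straight which identification (Lemma~\ref{lemma:domsob}, Lemma~\ref{lemma:H1toL2}, or the $K$--$\Lap_\theta$ equivalence) is valid near versus away from $r=0$, so that the coisotropic and nonfocusing conditions, which are phrased through $\Lap_\theta$ and the b-module, are genuinely preserved at every step. A secondary point needing care is that Theorem~\ref{thm:b-module-reg} is one-directional---incoming regularity implies outgoing---so the passage to all times must be carried out as the open-and-closed argument above rather than by a single application.
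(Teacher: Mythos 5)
Your Step 1 is essentially the paper's entire proof: the energy identity of Lemma~\ref{lemma:diracenergy} applied with $Q=K^{j}$, the commutator structure $[\ellipticdirac,K^{j}]=\sum K^{j'}\bB K^{j-1-j'}$ from Lemma~\ref{lemma: b-module-commutators} (with $\bB$ a smooth-coefficient angular operator, controlled via Lemma~\ref{lemma:back-to-K}), Gronwall to transfer $K$-regularity between the Cauchy data and the solution, and duality (the paper identifies $\hilbert^{*}=\sum K^{j'}\dom^{s}$ via Riesz, which is your pairing against coisotropic elements of the dual space) for the nonfocusing half.

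The one substantive difference is that your Step 2 is unnecessary, and the reason is worth internalizing: the Gronwall estimate in Step 1 is global in time, not local. The commutator bound holds uniformly on $\abs{t}\leq T$ for every $T$ (the coefficients $\bB$ come from the smooth potentials and do not degenerate), so the conclusion $\sum_{j'\leq j}\norm{K^{j'}u(t)}_{\dom^{s}}\leq C_{T,j}\sum_{j'\leq j}\norm{K^{j'}u(0)}_{\dom^{s}}$ already carries coisotropy of the Cauchy data from $t=0$ to all of $\RR$ in one stroke; there is nothing left to propagate by an open-and-closed argument. Had Step 2 actually been needed, it would have been considerably more delicate than you suggest: Theorem~\ref{thm:b-module-reg} and Proposition~\ref{proposition:interiorcoiso} are microlocal statements on the $\WFb^{1,m}$ and $H^{s}$ scales, and translating the global, $\dom^{s}$-based coisotropic and nonfocusing conditions of this lemma into and out of those microlocal statements (uniformly over all bicharacteristics, including the one-directional radial-point estimates) is precisely the kind of bookkeeping the energy-estimate route is designed to avoid. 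So: correct proof, same mechanism as the paper, plus a detour you can delete.
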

\begin{proof}
We begin with coisotropic regularity.
 By
Lemma~\ref{lemma:diracenergy},
\begin{equation}\label{Kenergy}
\frac d{dt} \norm{K^j u}_{\dom^s}^2 \lesssim
\abs{\smallang{[K^j, \ellipticdirac]u,K^j u}_{\dom^s}}.
\end{equation}
By Lemma~\ref{lemma: b-module-commutators}
$[\ellipticdirac, K^{j}]$ is a linear combination of terms of the
    form $K^{j'}\bB K^{j-1-j'}$, where $j' = 0, 1, \dots, j-1$ and
    $\bB \in \Diffb^{1}$ only differentiates in the angular variables.
    Thus by Lemma~\ref{lemma:back-to-K} and the following
    discussion,
    we may bound
    $$
\abs{\ang{K^{j'}\bB K^{j-1-j'}u, K^j u}_{\dom^s}} \lesssim \sum_{j'
  \leq j} \norm{K^{j'} u}_{\dom^s}^2
    $$

    Thus by Cauchy--Schwarz and  Gronwall, \eqref{Kenergy} yields inductively for all $T$,
$j,$
$$
\sum_{j'=0}^j \norm{K^{j'} u(t)}_{\dom^s} \leq C_{T,j} \sum_{j'=0}^j
\norm{K^{j'} u(0)}_{\dom^s},\quad \smallabs{t}\leq T.
$$
This shows that coisotropic regularity of the Cauchy data propagates,
and moreover that coisotropic regularity of the Cauchy data implies
$L^\infty_{\loc}(\RR; \dom^s)$ coisotropic regularity of the spacetime
solution.  Conversely, knowing merely $L^2 \dom^s$ coisotropic
regularity of the spacetimes solution implies that for a.e.\ $t,$ the
Cauchy data $u(t)$ enjoys coisotropic regularity, which then in turn
propagates to yield $L^\infty_{\loc}(\RR; \dom^s)$ spacetime
regularity.  This finishes the proof of the lemma for coisotropic regularity.

We now turn to nonfocusing.  We note that by the coisotropic results,
applied backwards in time, if we let $\hilbert$ denote the Hilbert
space with squared norm
\begin{equation}\label{H}
\sum_{j'=0}^j  \norm{K^{j'}u}_{\dom^s}^2,
\end{equation}
then we have estimated
\begin{equation}\label{propagatoronhilbert}
U(t) \hilbert \to L^\infty_{\loc}(\RR; \hilbert).
  \end{equation}
In particular, for fixed $t,$ $U(-t)$ is bounded $\hilbert \to \hilbert.$
Thus (by unitarity on $\dom^s,$) $U(-t)=U(t)^*: \hilbert^* \to
\hilbert^*,$ with dual spaces taken with respect to $\dom^s$ inner
product.  By the Riesz lemma,
\begin{equation}\label{Hstar}
\hilbert^*= \sum_{j'=0}^j K^{j'} \dom^s.
\end{equation}
This is just the space of Cauchy data nonfocusing with respect to
$\dom^s,$ hence the nonfocusing of Cauchy data is preserved under propagation.
Moreover, uniformity in $t$ of the maps $\hilbert\to
\hilbert$ show the uniformity in $t$ of the dual maps, hence yield the
equivalence with nonfocusing with respect to $L^2_{\loc}(\RR; \dom^s)$
as above in the coisotropic regularity case.
      \end{proof}

In order to show conormal regularity of the diffracted wavefront, it
is useful to have a refinement of Lemma~\ref{lemma:coiso1} that
additionally allows powers of the scaling operator $R.$
\begin{lemma}\label{lemma:coiso2}
Let
  $$
(i\dirac_\pot-m)u=0.
$$
Fix $k \in \NN.$
If for some $\ep>0,$ $u, Ru, \dots, R^ku$ enjoy coisotropic regularity of order $N$ with respect to $L^2_{\loc}(\RR; \dom^s)$ on
$(-\ep,\ep)_t \times X$
then $u, Ru, \dots, R^ku$ enjoy coisotropic regularity of order $N$ with respect to
$L^2_{\loc}(\RR; \dom^s)$ globally on $M.$

If for some $\ep>0,$ $u,\dots, R^ku$ enjoy the nonfocusing condition with respect to $L^2_{\loc}(\RR; \dom^s)$ on
$(-\ep,\ep)_t \times X$
then $u, \dots, R^ku $ enjoy the nonfocusing condition with respect to
$L^2_{\loc}(\RR; \dom^s)$ globally on $M.$

The conditions of coisotropic regularity resp.\ nonfocusing w.r.t.\
$L^2_{\loc}(\RR; \dom^s)$ for $R^j u$ are moreover equivalent to the conditions of
 coisotropic regularity resp.\ nonfocusing of the Cauchy data $\tR^j
 u(t_0),$ where $\tR = -t\ellipticdirac +r D_r$
(for any $t \in \RR$) w.r.t\ $\dom^s.$
\end{lemma}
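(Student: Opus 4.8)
The plan is to run the proof of Lemma~\ref{lemma:coiso1} with the one-parameter family of commutants $\{K^{j'}\}$ replaced by the bigraded family $\{K^{j'}R^{j}\}$, $j'\le N$, $j\le k$. This is possible because $R=tD_t+rD_r$ contains no angular derivatives, so $[R,K]=[R,\Lap_\theta]=0$ and there is no ordering issue, and because Lemma~\ref{lemma: b-module-commutators} controls the relevant commutators: $[\dop,K^{j'}]$ is a combination of $K^{a}\bB K^{j'-1-a}$ with $\bB\in\Diffb^{1}$ differentiating only in $\theta$, and $[\dop,R^{j}]$ is a combination of $R^{a}\dop$ and $R^{a}\bF_{a}$ with $a<j$, $\bF_{a}\in\CI$. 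Using $\dop u=0$ (so the $R^{a}\dop$ terms drop) and commuting the smooth coefficients past $R$ (which produces only smooth coefficients and lower powers of $R$), one obtains
\begin{equation*}
  \dop\bigl(K^{j'}R^{j}u\bigr) = [\dop,K^{j'}R^{j}]u = \sum_{(a,b)\preceq(j',j)} \bB_{a,b}\,K^{a}R^{b}u,
\end{equation*}
where each $\bB_{a,b}$ is a (non-scalar) $\CI$-coefficient differential operator that is at most first order and differentiates only in $\theta$, and the contributions at the top bigrading $(j',j)$ come only from a single angular derivative inside $[\dop,K^{j'}]$, so are harmless for the Gronwall step below exactly as the $j'=j$ term is in Lemma~\ref{lemma:coiso1}.

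From here I would copy the coisotropic half of the proof of Lemma~\ref{lemma:coiso1}, using the inhomogeneous energy estimate for $\dop$ rather than Lemma~\ref{lemma:diracenergy} (so as not to have to commute $\pd[t]$ past $R$): if $\dop w=f$ then $\frac{d}{dt}\norm{w}_{\dom^s}^{2}\le 2\norm{f}_{\dom^s}\norm{w}_{\dom^s}$, by self-adjointness of $\ellipticdirac$ exactly as in the proof of Lemma~\ref{lemma:diracenergy}. Applying this with $w=K^{j'}R^{j}u$ and $f$ the right-hand side above, and estimating each $\bB_{a,b}K^{a}R^{b}u$ as in Lemma~\ref{lemma:coiso1}---replacing the angular first-order operator $\bB_{a,b}$ by $\grad_\theta$ and using the $\dom^s$-version of Lemma~\ref{lemma:back-to-K} together with \eqref{K2m}---one gets $\norm{f}_{\dom^s}\lesssim\sum_{(c,b)\preceq(j',j)}\norm{K^{c}R^{b}u}_{\dom^s}$. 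Cauchy--Schwarz and Gronwall, run inductively over the finite bigraded family, then give uniform bounds $\sum\norm{K^{j'}R^{j}u(t)}_{\dom^s}\le C_{T}\sum\norm{K^{j'}R^{j}u(0)}_{\dom^s}$ on $\abs{t}\le T$; as in Lemma~\ref{lemma:coiso1} this propagates the joint coisotropic regularity of $u,Ru,\dots,R^{k}u$, shows it is equivalent to the same condition on Cauchy data, and (dualizing via the Riesz lemma, with $\hilbert$ the Hilbert space of Cauchy data with squared norm $\sum_{j'\le N}\sum_{j\le k}\norm{K^{j'}R^{j}u}_{\dom^s}^{2}$, on which $U(t)$ is uniformly bounded) yields the nonfocusing statement as well, just as in \eqref{H}--\eqref{Hstar}.

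The genuinely new point---and the one needing the most care---is that $R$ is not a spatial operator, since it contains $\pd[t]$, so ``$R^{j}u$ at time $t_0$'' must be rewritten as a spatial operator applied to $u(t_0)$. The equation does this: $\dop u=0$ gives $D_t u=-\ellipticdirac u$, hence $Ru|_{t=t_0}=(-t_0\ellipticdirac+rD_r)u(t_0)=\tilde R u(t_0)$, and iterating---with $R^{j}u$ solving $\dop(R^{j}u)=\sum_{i<j}\bF_{j,i}R^{i}u$, $\bF_{j,i}\in\CI$, by Lemma~\ref{lemma: b-module-commutators} and $\dop u=0$---one finds $R^{j}u|_{t=t_0}=\tilde R^{j}u(t_0)$ modulo terms $\tilde R^{i}u(t_0)$, $i<j$, times $\CI$ functions. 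Since the hypotheses and conclusions of the lemma concern the whole filtered module $\{K^{j'}R^{j}:j'\le N,\ j\le k\}$, this lower-order ambiguity is immaterial, and one obtains the stated equivalence with the conditions on $\tilde R^{j}u(t_0)$; it is cleanest to take $t_0=0$, where $\tilde R=rD_r$ and in fact $R^{j}u|_{t=0}=(rD_r)^{j}u(0)$ identically. Beyond this, the main thing to check is that the bigraded commutator structure really closes up on the scale $\dom^s$---that the non-scalar smooth coefficients, the first-order angular operators $\bB$, and the coefficients $\bF_{j,i}$ are all bounded on, or drop to lower bigrading in, $\dom^s$, despite $K$, $\Lap_\theta$, and $R$ not commuting exactly with $\ellipticdirac$ once the smooth potentials $V,A_{j}$ are switched on---but each such point is handled exactly as its counterpart in the proof of Lemma~\ref{lemma:coiso1}.
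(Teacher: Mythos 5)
Your coisotropic half is correct and is in substance the paper's argument: the paper fixes the $K$-only Hilbert space $\hilbert$ with squared norm $\sum_{j'\le N}\norm{K^{j'}v}_{\dom^s}^2$, notes from Lemma~\ref{lemma: b-module-commutators} that $\dop R^j u=\sum_{j'<j}\CI\, R^{j'}u$, and inducts on $j$ using Duhamel valued in $\hilbert$ together with the bound $U(t)\colon\hilbert\to\hilbert$ already established in Lemma~\ref{lemma:coiso1}; your Gronwall estimate on the full bigraded family $\{K^{j'}R^{j}u\}$ is the differential form of the same computation, and your observations that $[R,K]=0$ and that the top-bigrading angular terms are absorbed exactly as in Lemma~\ref{lemma:coiso1} are correct. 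Your treatment of the Cauchy-data equivalence ($R^j u|_{t=t_0}=\tR^j u(t_0)$ modulo lower-order terms in the filtered module) is, if anything, more careful than the paper's one-line remark.

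The nonfocusing half, however, has a genuine gap: you dualize with respect to the bigraded space with squared norm $\sum_{j'\le N}\sum_{j\le k}\norm{K^{j'}\tR^{j}v}_{\dom^s}^2$, whose Riesz dual is $\sum_{j',j}K^{j'}\tR^{j}\dom^s$. Membership of $u(t_0)$ in that space is not the condition asserted in the lemma; what must be propagated is that $\tR^{j}u(t_0)$ lies in $\sum_{j'}K^{j'}\dom^s$ \emph{for each} $j\le k$, i.e., nonfocusing is a range condition in powers of $K$ only, applied separately to each $\tR^{j}u$. Neither condition implies the other, so your dual-space argument does not yield the stated conclusion for $j\ge 1$. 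The repair is the paper's: dualize only in $K$, obtaining uniform boundedness of $U(t)$ on $\hilbert^{*}=\sum_{j'}K^{j'}\dom^s$ from \eqref{Hstar}, and then run your induction on $j$ at the level of the inhomogeneous equation, observing that $\dop R^{j}u=\sum_{j'<j}\CI\,R^{j'}u\in L^\infty_{\loc}(\RR;\hilbert^{*})$ by the inductive hypothesis (smooth matrix coefficients preserve $\sum_{j'}K^{j'}\dom^s$ after commuting them past powers of $K$), so that Duhamel valued in $\hilbert^{*}$ gives $R^{j}u\in L^\infty_{\loc}(\RR;\hilbert^{*})$. With that substitution the rest of your argument goes through.
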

\begin{proof}

  To obtain the propagation of coisotropic regularity of order $N$, we recall from
  Lemma~\ref{lemma: b-module-commutators} that 
\begin{equation}\label{Rku}
\dop R^k u = \sum_{k'=0}^{k-1} \CI R^{k'} u
\end{equation}
(with the $\CI$ terms non-scalar).
Thus, if $\hilbert$ is defined as in \eqref{H}, and if we inductively
assume that $u,\dots, R^{j-1} u$ enjoy coisotropic regularity, i.e.,
lie in $L^\infty_{\loc}(\RR; \hilbert),$ then
$$
\dop R^j u=\sum_{j'=0}^{j-1} \CI R^{j'} u \in L^\infty(\RR;\hilbert).
$$
Moreover if we assume that $R^j u$ has coisotropic regularity
initially, then it has initial data in $\hilbert.$ Duhamel's theorem
(employed with values in $\hilbert$) and \eqref{propagatoronhilbert}
  then imply that
$$
R^j u \in L^\infty(\RR; \hilbert) 
$$
as well; this inductively shows propagation of coisotropic regularity for $R^j u.$

The equivalence with the Cauchy data statement simply follows from the fact
that 
$$
Ru =\tR u
$$
for solutions of the Dirac equation.

To obtain the propagation of nonfocusing for $R^j u,$ where we have to
dualize in powers of $K$ but not in powers of $R,$ we apply the same
argument as above but with solutions in $L^\infty(\RR; \hilbert^*)$
rather than $L^\infty(\RR; \hilbert):$ we inductively show that $R^j u
\in L^\infty_{\loc}(\RR; \hilbert^*)$ for each $j \in \NN.$

\end{proof}
    
\begin{lemma}\label{lemma:fundsoln}
  Fix a 4--spinor $\psi_0$ and a point $x_0 \in \RR^3.$ Then the solution $u$ to the Dirac equation with initial data
  $$
\delta(x-x_0) \psi_0
$$
is in $\mathcal{C}(\RR; \dom^{-3/2-0}),$
and enjoys nonfocusing (on all of $M^\circ$) with respect to $\dom^{-1/2-0}.$
\end{lemma}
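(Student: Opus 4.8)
The plan is to reduce everything to the behavior of the Cauchy data $\delta(x-x_0)\psi_0$ and then feed it into the propagation results of Lemma~\ref{lemma:coiso1}; throughout we take $x_0\neq 0$, as in the setting of Theorem~\ref{theorem:structure}, so that the distributions in play are supported away from the spatial origin, where $\dom^s$ and $H^s$ agree by Lemma~\ref{lemma:domsob}. The first assertion is then immediate: $\delta(x-x_0)\in H^{-3/2-0}(\RR^3)$ because $\int(1+\abs{\xi}^2)^{-3/2-\epsilon}\,d\xi<\infty$ for every $\epsilon>0$, hence $\delta(x-x_0)\psi_0\in\dom^{-3/2-0}$ by Lemma~\ref{lemma:domsob}, and since $e^{-it\ellipticdirac}$ is unitary on each $\dom^s$ by Stone's theorem, $u=e^{-it\ellipticdirac}\big(\delta(x-x_0)\psi_0\big)\in\mathcal{C}(\RR;\dom^{-3/2-0})$.

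For the nonfocusing statement, the last part of Lemma~\ref{lemma:coiso1} lets me work entirely at the level of the Cauchy data: it suffices to show $\delta(x-x_0)\psi_0$ is nonfocusing with respect to $\dom^{-1/2-0}$, and Lemma~\ref{lemma:coiso1} then propagates this to nonfocusing of $u$ with respect to $L^2_{\loc}(\RR;\dom^{-1/2-0})$ on all of $M$, which away from the origin coincides with nonfocusing with respect to $H^{-1/2-0}$, as claimed on $M^\circ$. To exhibit the nonfocusing of the Cauchy data I would take the operator in the definition to be the identity and set $u'\equiv(\Id+\Lap_\theta)^{-N}\big(\delta(x-x_0)\psi_0\big)$ (the negative power defined by the spectral theorem in the angular variables), so that $(\Id+\Lap_\theta)^N u'=\delta(x-x_0)\psi_0$; it remains only to check $u'\in\dom^{-1/2-0}$ for $N$ large enough ($N=1$ suffices). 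Since $u'$ is supported on the compact sphere $\{\abs{x}=\abs{x_0}\}$, away from the origin, Lemma~\ref{lemma:domsob} reduces this to showing $u'\in H^{-1/2-0}(\RR^3)$.

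This last point is the crux, and I expect it to be the only part requiring genuine (if routine) work. In polar coordinates about the origin $\delta(x-x_0)$ is, up to a nonzero constant, the tensor product of $\delta(r-r_0)$ with the delta distribution at $\theta_0\in S^2$, where $r_0=\abs{x_0}$ and $\theta_0=x_0/\abs{x_0}$, and $\Lap_\theta$ acts only on the angular factor. The angular delta lies in $H^{-1-0}(S^2)$, so $(\Id+\Lap_\theta)^{-N}$ carries it into $H^{2N-1-0}(S^2)\subset L^2(S^2)$ once $N\geq 1$, while $\delta(r-r_0)\in H^{-1/2-0}(\RR_r)$. The elementary bound $(1+\sigma^2+\abs{\zeta}^2)^{-1/2-\epsilon}\leq(1+\sigma^2)^{-1/2-\epsilon}$ on the Fourier side then shows that a tensor product $f\otimes g$ with $f\in H^{-1/2-0}(\RR_r)$ and $g\in L^2(S^2)$ lies in $H^{-1/2-0}$ of the product, so $u'\in H^{-1/2-0}$ as needed. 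The conceptual content is that a point mass in $\RR^3$ has Sobolev order $-\tfrac32=-\tfrac12+(-1)$, with the $-\tfrac12$ carried by a radial factor and the $-1$ by a two-dimensional angular factor; smoothing the angular factor with inverse powers of $\Lap_\theta$ recovers exactly the missing unit of regularity, i.e.\ a point singularity is nonfocusing of order one along the sphere through it. Once the Cauchy data is understood in this way, Lemma~\ref{lemma:coiso1} supplies everything else.
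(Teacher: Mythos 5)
Your proposal is correct and follows essentially the same route as the paper: both arguments get the $\mathcal{C}(\RR;\dom^{-3/2-0})$ membership from unitarity of the propagator, and both establish nonfocusing of the Cauchy data by splitting $\delta(x-x_0)$ in polar coordinates into a radial factor in $H^{-1/2-0}$ tensored with an angular delta that $(\Id+\Lap_\theta)^{-N}$ smooths, then propagate via Lemma~\ref{lemma:coiso1}. The only cosmetic difference is that you carry out the tensor-product Sobolev estimate explicitly (with $N=1$) where the paper instead quotes $(\Id+\Lap_\theta)^{-N}\delta(r-r_0)\delta(\theta-\theta_0)\in\mathcal{C}^k(S^2;H^{-1/2-0}(\RR_+))$ for $N$ large, and you make explicit the implicit assumption $x_0\neq 0$ that the paper's polar-coordinate factorization also requires.
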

\begin{proof}
  This is essentially a vector-valued version of \cite[Lemma 16.1,
  Proposition 16.2]{Melrose-Wunsch1}. We first note that by energy
  conservation (see \S\ref{section:energy}), $u \in \mathcal{C}(\RR; \dom^{-3/2-0})$ since $\delta \in H^{-3/2-0}.$
  On the other hand, given any $k,$ for $N$ large,
  $$
(\Id+\Lap_\theta)^{-N} \delta(r-r_0) \delta(\theta-\theta_0) \in
\mathcal{C}^k(S^2; H^{-1/2-0}(\RR_+)),
$$
hence taking $k\gg 0$ yields
$$
(\Id+\Lap_\theta)^{-N} u(0) \in \dom^{-1/2-0}.
$$
This suffices to establish nonfocusing at $t=0$ and hence globally in
time, by Lemma~\ref{lemma:coiso1}.
\end{proof}

Finally, we consider the regularity of the solution on the strictly
diffracted wavefront $\diff
\backslash \geom.$  Let $u$
denote the solution with initial data   $\delta(x-x_0) \psi_0,$ where
$x_0=(r_0,\theta_0)$ in polar coordinates.  For $t_0>r_0,$ consider any
point $(r=t_0-r_0, \theta)$ with $\theta\neq -\theta_0$ and let $U$ be a
neighborhood of this point in $X^\circ$ disjoint from
$\pi(\geom)=\{\smallabs{x-x_0}=t\}$ for $t \in I\equiv (t_0-\ep, t_0+\ep).$
  By Lemma~\ref{lemma:fundsoln}, $u$ is
nonfocusing in $T^* (I\times U)$ (or, indeed, globally) relative to $L^2\dom^{-1/2-0}.$  On the other
hand, we now apply the edge propagation theorem
(Theorem~\ref{theorem:edgepropagation})  to the solution
$\Theta_{-3/2+\ep} u,$ which lies in $\mathcal{C}^0(\RR; \dom^0),$
hence in particular, say, in $L^2_\loc(M).$  Thus the
edge regularity hypotheses of the edge propagation theorem are
satisfied (with $l=0$), and we conclude, also using
Proposition~\ref{proposition:interiorcoiso} for propagation into $r>0,$ that for some fixed $M,$ for
all $k \in \NN,$ $\WF^M (\Lap_\theta^k \Theta_{-3/2-\ep} u)\cap
T^*(M^\circ)$ is disjoint from the strictly diffractive flowout from the
origin $$N^*\{r=t-r_0\}\cap \{ \theta\neq -\theta_0\}.$$
In particular, then,
since no points in $T^*(I\times U)$ are geometrically related to the initial
singularity, $u$ (which differs from
$\Theta_{3/2+\ep}(\Theta_{-3/2-\ep} u)$ by a smooth error) enjoys
\emph{coisotropic regularity of every order} relative to some Sobolev space
$H^{M'}$ on $I\times U.$  By an
interpolation argument \cite[Section~13]{MVW1}, a distribution that is
nonfocusing relative to $H^s$ and enjoys infinite order coisotropic
regularity relative to some fixed $H^k$ in fact lies in $H^{s-0},$
hence $u$ enjoys this regularity over $I\times U$ (and it moreover also
enjoys iterated regularity under $K$ relative to these spaces).  This
proves that the fundamental solution $u$ lies in
$H^{-1/2-0}$ near $\diff \backslash \geom$ and moreover that
$\Lap_\theta^k u$ enjoys the same regularity for all $k.$

Finally, we show that the diffracted wave is a conormal singularity.  To
begin, we further analyze the singularity of the fundamental solution
for short time: since $Pu=0$ with $P=\Box$ modulo lower order terms,
we have energy estimates for $u$ for short time, and the parametrix
construction \cite[Theorem
29.1.1]{Hormander:vol4} applies, and shows that $u\in \mathcal{C}(\RR;
\dom^{-3/2-0})$ is conormal 
to $\smallabs{x-x_0}=\smallabs{t}$ whenever
$\smallabs{t}<\smallabs{x_0}.$ (Beyond this range of times, the support
reaches the singularity of the potential, which cannot be treated as a
perturbation any longer).
Consequently, as $N\to \infty,$ the angular smoothing of $u,$
$$
(\Id+\Lap_\theta)^{-N} u,
$$
approximates a sum of conormal distributions in $H^{-1/2-0}$ at the
hypersurfaces $r=r_0 \pm t.$ Since $R\equiv (t-r_0) D_t+ rD_r$ is
tangent to $\{r=r_0 - t\},$ for $t \in (0, r_0)$ the regularity of
this latter piece of the solution is unaffected by the iterated
application of $R.$ Thus for each $j \in \NN,$ $R^j u$ satisfies the
nonfocusing condition relative to $\dom^{-1/2-0}$ for
$t \in (0, r_0),$ microlocally away from the outgoing spherical wave
$N^*\{r=r_0+t\}.$ (See \cite[Lemma 16.1]{Melrose-Wunsch1} for details
of this computation.)  Note that we may microlocalize our solution
away from the outgoing spherical wave without changing the diffracted
wave (by the b propagation theorem), hence we may ignore this part of
the solution.

By Lemma~\ref{lemma:coiso2}, the nonfocusing condition persists for
all $t \in \RR.$ On the other hand,
Theorem~\ref{theorem:edgepropagation}
implies that along the
strictly diffracted wavefront (and for $r$ small), for every $j \in \NN,$ $R^j u$ enjoys
coisotropic regularity with respect to some fixed (but $j$-dependent)
Sobolev space $H^{-M(j)}.$ Once again, by interpolation, we then have
$R^j u \in H^{-1/2-0}$ along the strictly diffracted wavefront for every $j,$
and this, along with the coisotropic regularity and the equation
$Pu=0,$ establishes conormal regularity along the Lagrangian $\diff =
N^{*}\{r = t - r_{0}\}$ at points $\theta \neq -\theta_{0}$ (i.e.,
away from $\geom$).
  

\def\cprime{$'$} \def\cprime{$'$} \def\cprime{$'$}
  \def\polhk#1{\setbox0=\hbox{#1}{\ooalign{\hidewidth
  \lower1.5ex\hbox{`}\hidewidth\crcr\unhbox0}}} \def\cprime{$'$}
  \def\cprime{$'$} \def\cprime{$'$} \def\cprime{$'$} \def\cprime{$'$}
  \def\cprime{$'$} \def\cprime{$'$} \def\cprime{$'$} \def\cprime{$'$}
  \def\cprime{$'$} \def\cprime{$'$} \def\cprime{$'$} \def\cprime{$'$}
  \def\cprime{$'$} \def\cprime{$'$} \def\cprime{$'$} \def\cprime{$'$}
  \def\cprime{$'$} \def\cprime{$'$} \def\cprime{$'$} \def\cprime{$'$}
  \def\cprime{$'$} \def\cprime{$'$} \def\cprime{$'$} \def\cprime{$'$}
  \def\cprime{$'$} \def\cprime{$'$} \def\cprime{$'$}
\begin{bibdiv}
\begin{biblist}

\bib{AkBe:65}{book}{
      author={Akhiezer, A.~I.},
      author={Berestetskii, V.~B.},
       title={Quantum electrodynamics},
      series={Authorized English edition revised and enlarged by the authors:
  Translated from the second Russian edition by G. M. Volkoff. Interscience
  Monographs and Texts in Physics and Astronomy, Vol. XI},
   publisher={Interscience Publishers John Wiley \& Sons, Inc., New
  York-London-Sydney},
        date={1965},
      review={\MR{0191374}},
}

\bib{BaMa1}{misc}{
      author={Baskin, Dean},
      author={Marzuola, Jeremy~L.},
       title={The radiation field on product cones},
        date={2019},
}

\bib{BaVaWu:15}{article}{
      author={Baskin, Dean},
      author={Vasy, Andr{\'a}s},
      author={Wunsch, Jared},
       title={Asymptotics of radiation fields in asymptotically {M}inkowski
  space},
        date={2015},
        ISSN={0002-9327},
     journal={Amer. J. Math.},
      volume={137},
      number={5},
       pages={1293\ndash 1364},
      review={\MR{3405869}},
}

\bib{BaVaWu:18}{article}{
      author={Baskin, Dean},
      author={Vasy, Andr\'{a}s},
      author={Wunsch, Jared},
       title={Asymptotics of scalar waves on long-range asymptotically
  {M}inkowski spaces},
        date={2018},
        ISSN={0001-8708},
     journal={Adv. Math.},
      volume={328},
       pages={160\ndash 216},
         url={https://doi.org/10.1016/j.aim.2018.01.012},
      review={\MR{3771127}},
}

\bib{BoDaFa11}{article}{
      author={Boussaid, Nabile},
      author={D'Ancona, Piero},
      author={Fanelli, Luca},
       title={Virial identity and weak dispersion for the magnetic {D}irac
  equation},
        date={2011},
        ISSN={0021-7824},
     journal={J. Math. Pures Appl. (9)},
      volume={95},
      number={2},
       pages={137\ndash 150},
         url={https://doi.org/10.1016/j.matpur.2010.10.004},
      review={\MR{2763073}},
}

\bib{CaDa13}{article}{
      author={Cacciafesta, Federico},
      author={D'Ancona, Piero},
       title={Endpoint estimates and global existence for the nonlinear {D}irac
  equation with potential},
        date={2013},
        ISSN={0022-0396},
     journal={J. Differential Equations},
      volume={254},
      number={5},
       pages={2233\ndash 2260},
         url={https://doi.org/10.1016/j.jde.2012.12.002},
      review={\MR{3007110}},
}

\bib{CaSe16}{article}{
      author={Cacciafesta, Federico},
      author={S\'{e}r\'{e}, \'{E}ric},
       title={Local smoothing estimates for the massless {D}irac-{C}oulomb
  equation in 2 and 3 dimensions},
        date={2016},
        ISSN={0022-1236},
     journal={J. Funct. Anal.},
      volume={271},
      number={8},
       pages={2339\ndash 2358},
         url={https://doi.org/10.1016/j.jfa.2016.04.003},
      review={\MR{3539356}},
}

\bib{MR84h:35091a}{article}{
      author={Cheeger, Jeff},
      author={Taylor, Michael},
       title={On the diffraction of waves by conical singularities. {I}},
        date={1982},
        ISSN={0010-3640},
     journal={Comm. Pure Appl. Math.},
      volume={35},
      number={3},
       pages={275\ndash 331},
      review={\MR{84h:35091a}},
}

\bib{MR84h:35091b}{article}{
      author={Cheeger, Jeff},
      author={Taylor, Michael},
       title={On the diffraction of waves by conical singularities. {I}{I}},
        date={1982},
        ISSN={0010-3640},
     journal={Comm. Pure Appl. Math.},
      volume={35},
      number={4},
       pages={487\ndash 529},
      review={\MR{84h:35091b}},
}

\bib{DaFa07}{article}{
      author={D'Ancona, Piero},
      author={Fanelli, Luca},
       title={Decay estimates for the wave and {D}irac equations with a
  magnetic potential},
        date={2007},
        ISSN={0010-3640},
     journal={Comm. Pure Appl. Math.},
      volume={60},
      number={3},
       pages={357\ndash 392},
         url={https://doi.org/10.1002/cpa.20152},
      review={\MR{2284214}},
}

\bib{Da:28}{article}{
      author={Darwin, Charles~Galton},
       title={The wave equations of the electron},
        date={1928},
     journal={Proceedings of the Royal Society of London. Series A, Containing
  Papers of a Mathematical and Physical Character},
      volume={118},
      number={780},
       pages={654\ndash 680},
}

\bib{Duistermaat-Hormander1}{article}{
      author={Duistermaat, J.J.},
      author={H{\"o}rmander, L.},
       title={Fourier integral operators, {II}},
        date={1972},
     journal={Acta Math.},
      volume={128},
       pages={183\ndash 269},
}

\bib{ErGrTo19}{article}{
      author={Erdo\u{g}an, M.~Burak},
      author={Green, William~R.},
      author={Toprak, Ebru},
       title={Dispersive estimates for {D}irac operators in dimension three
  with obstructions at threshold energies},
        date={2019},
        ISSN={0002-9327},
     journal={Amer. J. Math.},
      volume={141},
      number={5},
       pages={1217\ndash 1258},
         url={https://doi.org/10.1353/ajm.2019.0031},
      review={\MR{4011799}},
}

\bib{MR20:3703}{book}{
      author={Friedlander, F.~G.},
       title={Sound pulses},
   publisher={Cambridge University Press},
     address={New York},
        date={1958},
      review={\MR{20 \#3703}},
}

\bib{GeWr:14}{article}{
      author={G\'{e}rard, C.},
      author={Wrochna, M.},
       title={Construction of {H}adamard states by pseudo-differential
  calculus},
        date={2014},
        ISSN={0010-3616},
     journal={Comm. Math. Phys.},
      volume={325},
      number={2},
       pages={713\ndash 755},
         url={https://doi.org/10.1007/s00220-013-1824-9},
      review={\MR{3148100}},
}

\bib{GiMe:03}{article}{
      author={Gil, Juan~B.},
      author={Mendoza, Gerardo~A.},
       title={Adjoints of elliptic cone operators},
        date={2003},
        ISSN={0002-9327},
     journal={Amer. J. Math.},
      volume={125},
      number={2},
       pages={357\ndash 408},
  url={http://muse.jhu.edu/journals/american_journal_of_mathematics/v125/125.2gil.pdf},
      review={\MR{1963689}},
}

\bib{Gr:01}{incollection}{
      author={Grieser, Daniel},
       title={Basics of the {$b$}-calculus},
        date={2001},
   booktitle={Approaches to singular analysis ({B}erlin, 1999)},
      series={Oper. Theory Adv. Appl.},
      volume={125},
   publisher={Birkh\"{a}user, Basel},
       pages={30\ndash 84},
      review={\MR{1827170}},
}

\bib{HiVa:18}{article}{
      author={Hintz, Peter},
      author={Vasy, Andr\'{a}s},
       title={The global non-linear stability of the {K}err--de {S}itter family
  of black holes},
        date={2018},
        ISSN={0001-5962},
     journal={Acta Math.},
      volume={220},
      number={1},
       pages={1\ndash 206},
         url={https://doi.org/10.4310/ACTA.2018.v220.n1.a1},
      review={\MR{3816427}},
}

\bib{Hormander:vol3}{book}{
      author={H{\"o}rmander, Lars},
       title={The analysis of linear partial differential operators. {III}},
      series={Grundlehren der Mathematischen Wissenschaften [Fundamental
  Principles of Mathematical Sciences]},
   publisher={Springer-Verlag},
     address={Berlin},
        date={1985},
      volume={274},
        ISBN={3-540-13828-5},
        note={Pseudodifferential operators},
      review={\MR{MR781536 (87d:35002a)}},
}

\bib{Hormander:vol4}{book}{
      author={H{\"o}rmander, Lars},
       title={The analysis of linear partial differential operators. {IV}},
      series={Grundlehren der Mathematischen Wissenschaften [Fundamental
  Principles of Mathematical Sciences]},
   publisher={Springer-Verlag},
     address={Berlin},
        date={1994},
      volume={275},
        ISBN={3-540-13829-3},
        note={Fourier integral operators, Corrected reprint of the 1985
  original},
      review={\MR{MR1481433 (98f:35002)}},
}

\bib{Kato:book}{book}{
      author={Kato, Tosio},
       title={Perturbation theory for linear operators},
      series={Die Grundlehren der mathematischen Wissenschaften, Band 132},
   publisher={Springer-Verlag New York, Inc., New York},
        date={1966},
      review={\MR{0203473}},
}

\bib{Ka:66}{book}{
      author={Kat{\=o}, Tosio},
       title={Perturbation theory for linear operators},
   publisher={Springer},
        date={1966},
      volume={132},
}

\bib{Keller1}{article}{
      author={Keller, Joseph~B.},
       title={One hundred years of diffraction theory},
        date={1985},
        ISSN={0018-926X},
     journal={IEEE Trans. Antennas and Propagation},
      volume={33},
      number={2},
       pages={123\ndash 126},
      review={\MR{MR784893 (86e:78002)}},
}

\bib{Lax-Phillips1}{book}{
      author={Lax, P.D.},
      author={Phillips, R.S.},
       title={Scattering theory},
   publisher={Academic Press},
     address={New York},
        date={1967},
        note={Revised edition, 1989},
}

\bib{Le:97}{book}{
      author={Lesch, Matthias},
       title={Operators of {F}uchs type, conical singularities, and asymptotic
  methods},
      series={Teubner-Texte zur Mathematik [Teubner Texts in Mathematics]},
   publisher={B. G. Teubner Verlagsgesellschaft mbH, Stuttgart},
        date={1997},
      volume={136},
        ISBN={3-8154-2097-0},
      review={\MR{1449639}},
}

\bib{MR93d:58152}{article}{
      author={Mazzeo, Rafe},
       title={Elliptic theory of differential edge operators. {I}},
        date={1991},
        ISSN={0360-5302},
     journal={Comm. Partial Differential Equations},
      volume={16},
      number={10},
       pages={1615\ndash 1664},
      review={\MR{93d:58152}},
}

\bib{Melrose14}{article}{
      author={Melrose, R.B.},
       title={Microlocal parametrices for diffractive boundary value problems},
        date={1975},
     journal={Duke Math. J.},
      volume={42},
       pages={605\ndash 635},
}

\bib{Melrose-Sjostrand1}{article}{
      author={Melrose, R.B.},
      author={Sj{\"o}strand, J.},
       title={Singularities in boundary value problems {I}},
        date={1978},
     journal={Comm. Pure Appl. Math.},
      volume={31},
       pages={593\ndash 617},
}

\bib{Melrose-Sjostrand2}{article}{
      author={Melrose, R.B.},
      author={{Sj\"ostrand}, J.},
       title={Singularities in boundary value problems {II}},
        date={1982},
     journal={Comm. Pure Appl. Math.},
      volume={35},
       pages={129\ndash 168},
}

\bib{MVW1}{article}{
      author={Melrose, Richard},
      author={Vasy, Andr{\'a}s},
      author={Wunsch, Jared},
       title={Propagation of singularities for the wave equation on edge
  manifolds},
        date={2008},
        ISSN={0012-7094},
     journal={Duke Math. J.},
      volume={144},
      number={1},
       pages={109\ndash 193},
      review={\MR{MR2429323 (2009f:58042)}},
}

\bib{MeVaWu:13}{article}{
      author={Melrose, Richard},
      author={Vasy, Andr{\'a}s},
      author={Wunsch, Jared},
       title={Diffraction of singularities for the wave equation on manifolds
  with corners},
        date={2013},
        ISSN={0303-1179},
     journal={Ast\'erisque},
      number={351},
       pages={vi+135},
      review={\MR{3100155}},
}

\bib{Melrose-Wunsch1}{article}{
      author={Melrose, Richard},
      author={Wunsch, Jared},
       title={Propagation of singularities for the wave equation on conic
  manifolds},
        date={2004},
        ISSN={0020-9910},
     journal={Invent. Math.},
      volume={156},
      number={2},
       pages={235\ndash 299},
      review={\MR{MR2052609 (2005e:58048)}},
}

\bib{Melrose:APS}{book}{
      author={Melrose, Richard~B},
       title={The {A}tiyah--{P}atodi--{S}inger index theorem},
   publisher={AK Peters Wellesley},
        date={1993},
      volume={4},
}

\bib{MR95k:58168}{incollection}{
      author={Melrose, Richard~B.},
       title={Spectral and scattering theory for the {L}aplacian on
  asymptotically {E}uclidian spaces},
        date={1994},
   booktitle={Spectral and scattering theory (sanda, 1992)},
   publisher={Dekker},
     address={New York},
       pages={85\ndash 130},
      review={\MR{95k:58168}},
}

\bib{Qian1}{unpublished}{
      author={Qian, Randy},
       title={Diffractive theorems for the wave equation with inverse square
  potential},
        date={2009},
        note={Northwestern University Ph.D. thesis, 2009},
}

\bib{Ro:61}{book}{
      author={Rose, Morris~Edgar},
       title={Relativistic electron theory},
   publisher={Wiley},
        date={1961},
}

\bib{Sommerfeld1}{article}{
      author={Sommerfeld, A.},
       title={Mathematische theorie der diffraktion},
        date={1896},
     journal={Math. Annalen},
      volume={47},
       pages={317\ndash 374},
}

\bib{Sz:07}{article}{
      author={Szmytkowski, Rados{\l}aw},
       title={Recurrence and differential relations for spherical spinors},
        date={2007},
     journal={Journal of Mathematical Chemistry},
      volume={42},
      number={3},
       pages={397\ndash 413},
}

\bib{Tang-Zworski1}{article}{
      author={Tang, Siu-Hung},
      author={Zworski, Maciej},
       title={Resonance expansions of scattered waves},
        date={2000},
        ISSN={0010-3640},
     journal={Comm. Pure Appl. Math.},
      volume={53},
      number={10},
       pages={1305\ndash 1334},
  url={http://dx.doi.org/10.1002/1097-0312(200010)53:10<1305::AID-CPA4>3.3.CO;2-R},
      review={\MR{1768812 (2001f:35306)}},
}

\bib{Taylor1}{article}{
      author={Taylor, M.E.},
       title={Grazing rays and reflection of singularities to wave equations},
        date={1978},
     journal={Comm. Pure Appl. Math.},
      volume={29},
       pages={1\ndash 38},
}

\bib{Vainberg1}{book}{
      author={Vainberg, B.},
       title={Asymptotic methods in equations of mathematical physics},
   publisher={Gordon and Breach},
     address={New York},
        date={1988},
}

\bib{Va:08}{article}{
      author={Vasy, Andr\'{a}s},
       title={Propagation of singularities for the wave equation on manifolds
  with corners},
        date={2008},
        ISSN={0003-486X},
     journal={Ann. of Math. (2)},
      volume={168},
      number={3},
       pages={749\ndash 812},
         url={https://doi.org/10.4007/annals.2008.168.749},
      review={\MR{2456883}},
}

\bib{Vasy:KdS}{article}{
      author={Vasy, Andr\'{a}s},
       title={Microlocal analysis of asymptotically hyperbolic and {K}err-de
  {S}itter spaces (with an appendix by {S}emyon {D}yatlov)},
        date={2013},
        ISSN={0020-9910},
     journal={Invent. Math.},
      volume={194},
      number={2},
       pages={381\ndash 513},
         url={https://doi.org/10.1007/s00222-012-0446-8},
      review={\MR{3117526}},
}

\bib{Weidmann}{article}{
      author={Weidmann, Joachim},
       title={Oszillationsmethoden f\"{u}r {S}ysteme gew\"{o}hnlicher
  {D}ifferentialgleichungen},
        date={1971},
        ISSN={0025-5874},
     journal={Math. Z.},
      volume={119},
       pages={349\ndash 373},
         url={https://doi.org/10.1007/BF01109887},
      review={\MR{285758}},
}

\end{biblist}
\end{bibdiv}

\end{document}